\newtheorem{Thm}{Theorem}[section]
\newtheorem{Prp}[Thm]{Proposition}
\newtheorem{Crl}[Thm]{Corollary}
\newtheorem{Cnj}[Thm]{Conjecture}
\theoremstyle{definition}
\newtheorem{Exp}[Thm]{Example}
\theoremstyle{remark}
\newtheorem{Rmk}[Thm]{Remarks}
\numberwithin{equation}{section}
\def\opp{\mathrm{op}}       
\def\Ker{\mathrm{Ker}}
\def\Im{\mathrm{Im}}
\def\N{\mathbb{N}}
\def\Z{\mathbb{Z}}
\def\Q{\mathbb{Q}}
\def\N{\mathbb{N}}
\def\R{\mathbb{R}}
\def\C{\mathbb{C}}
\def\HP{\mathrm{HP}}
\newcommand{\icases}[7]{#7\{\!\begin{smallmatrix}
                                #5#1\hfill;  & ~#5#2\hfill &\!\!\\[#6]
                                #5#3\hfill;  & ~#5#4\hfill &\!\!
                              \end{smallmatrix}}
\begin{document}
\title{(Co)Homology of Poset Lie Algebras}
\author{Leon Lampret}
\address{Institute for Mathematics, Physics and Mechanics, Ljubljana, Slovenia\vspace{-6pt}}
\address{Department of Mathematics, University of Ljubljana, Slovenia}
\email{leon.lampret@imfm.si}
\author{Aleš Vavpetič}
\address{Department of Mathematics, University of Ljubljana, Slovenia}
\email{ales.vavpetic@fmf.uni-lj.si}
\date{April 29, 2015}
\keywords{algebraic/discrete Morse theory, homological algebra, chain complex, acyclic matching, solvable Lie algebra, triangular matrices, torsion table, algebraic combinatorics}
\subjclass{17B56, 13P20, 13D02, 55-04, 18G35, 58E05}

\begin{abstract}
We investigate the (co)homological properties of two classes of Lie algebras that are constructed from any finite poset: the solvable class $\frak{gl}^\preceq$ and the nilpotent class $\frak{gl}^\prec$. We confirm the conjecture \cite[1.16.(1), p.141]{citeJollenbeckADMTACA} that says: every prime power $p^r\!\leq\!n\!-\!2$ appears as torsion in $H_\ast(\frak{nil}_n;\Z)$, and every prime power $p^r\!\leq\!n\!-\!1$ appears as torsion in $H_\ast(\frak{sol}_n;\Z)$. If $\preceq$ is a bounded poset, then the (co)homology of $\frak{gl}^\preceq$ is \emph{torsion-convex}, i.e. if it contains $p$-torsion, then it also contains $p'$-torsion for every prime $p'\!<\!p$.
\par We obtain new explicit formulas for the (co)homology of some families over arbitrary fields. Among them are the solvable non-nilpotent analogs of the Heisenberg Lie algebras from \cite{citeCairnsJamborCHLAOFFC}, the 2-step Lie algebras from \cite{citeArmstrongCairnsJessupEBNFNLA}, strictly block-triangular Lie algebras, etc. The resulting generating functions and the combinatorics of how they are obtained are interesting in their own right.
\par All this is done by using AMT (algebraic Morse theory \cite{citeLampretVavpeticCLAAMT}, \cite{citeSkoldbergMTFAV}, \cite{citeJollenbeckADMTACA}). This article serves as a source of examples of how to construct useful acyclic matchings, each of which in turn induces compelling combinatorial problems and solutions. It also enables graph theory to be used in homological algebra.
\end{abstract}\vspace{-8mm}

\maketitle

\subsection*{Background} Let $\frak{nil}_n\!\leq\!\frak{sol}_n\!\leq\!\frak{gl}_n$ be Lie algebras of all ((strictly) triangular) $n\!\times\!n$ matrices over a field $K$. By the Ado-Iwasawa theorem, every finite-dimensional Lie $K$\!-algebra $\frak{g}$ admits an embedding into some $\frak{gl}_n$. By Engel's theorem, $\frak{g}$ is nilpotent iff it admits an embedding into some $\frak{nil}_n$. By Lie's theorem, $\frak{g}$ is solvable iff it admits an embedding into some $\frak{sol}_n$, when $K$ is algebraically closed of characteristic $0$ or more that $\dim\frak{g}$. Lie algebras $\frak{gl}_n$, $\frak{sol}_n$, $\frak{nil}_n$ over $\C$ induce the classical Lie groups $G\!L_n\!=\! \big\{A\!\in\!\C^{n\times n};\, \det A\!\neq\!0\big\}$ and $SOL_n\!=\! \big\{A\!\in\!GL_n;\, A_{ij}\!=\!0\text{ for }i\!>\!j\big\}$ and $N\!I\!L_n\!=\! \big\{A\!\in\!SOL_n;\, A_{ii}\!=\!1 \big\}$ via the exponential map \smash{$X\!\mapsto\!\sum_{k\in\N}\!\frac{X^k}{k!}$}.
\par Nilpotent and solvable Lie algebras are less wild than general ones. For instance, there are 9 cases and 2 infinite families of nonisomorphic Lie algebras of dimension $\leq3$ over $\R$ or $\C$, by the Bianchi classification (i.e. continuum many isomorphism classes). But there are only 2 cases and 2 families of nonisomorphic solvable Lie algebras of dimension $\leq3$ over \emph{any} field, by \cite{citeGraafCSLA}. Also, there are just 9 cases of nonisomorphic nilpotent Lie algebras of dimension $\leq5$ over \emph{any} field, by \cite{citeGraafC6NLAFC2}.
\par In this article, we concentrate on those Lie subalgebras of $\frak{nil}_n$ and $\frak{sol}_n$ which admit a basis consisting of matrix units $e_{ij}$. These are precisely  $\frak{gl}_n^\prec$ and $\frak{gl}_n^\preceq$ that arise from posets, and they are combinatorial in nature. Their structure constants are $\pm1$ or $0$. Special cases of these families include $\frak{nil}_n$, $\frak{sol}_n$, block strictly triangular matrices, the Heisenberg family from \cite{citeCairnsJamborCHLAOFFC}, the 2-step family from \cite{citeArmstrongCairnsJessupEBNFNLA}, etc. We are interested in the (co)homology of $\frak{gl}_n^\preceq$ and $\frak{gl}_n^\prec$ from a combinatorial viewpoint.

\subsection*{Results} In this article, the following facts are proved. Every convex subposet induces a direct summand on (co)homology. A poset and its opposite induce the same (co)homology. There holds Poincar\'{e} duality: $H^k(\frak{gl}_n^\prec;\Z) \cong H_{\dim\!-k}(\frak{gl}_n^\prec;\Z)$ for all $k$, but $H^k(\frak{gl}_n^\preceq;\Z) \ncong H_{\dim\!-k}(\frak{gl}_n^\preceq;\Z)$. There holds $C_\star(\frak{gl}_n^\prec)=\bigoplus_{w\in\Z^n}\!C_{[w]}$ and $C_{\star,p}(\frak{gl}_n^\prec)=\bigoplus_{w\in(p\Z)^n}\!C_{[w]}$, where $C_{[w]}$ is the subcomplex spanned by all wedges with weight vector $w$. This is very useful for computational purposes: it enabled us to obtain $H_\ast(\frak{nil}_7;\Z)$ on a home computer, even though $\dim\Lambda^{\!\ast}\frak{nil}_7\!=\! 2^{\binom{7}{2}}\!=\! 2097152$.
\par For any $a,b\!\in\![n]$ and any $m\!\in\!\N$ such that the interval $[a,b]$ in $\preceq$ contains more than $m$ elements, there is a direct summand $\Z_m$ in $H_\ast(\frak{gl}_n^\preceq;\Z)$. If $\preceq$ has a largest and a least element, then $H_\ast(\frak{gl}_n^\preceq;\Z)$ contains $p$-torsion iff $p\!<\!n$. There is a direct summand $\Z_{n\!-\!2}$ in $H_\ast(\frak{nil}_n;\Z)$. If $H_\ast(\frak{gl}_n^\preceq;\Z)$ does not contain $2$-torsion, then its Hasse diagram is a forest of height $\leq1$, hence $H_\ast(\frak{gl}_n^\preceq;\Z)$ is torsion-free. If chain complex $C_{\star,p}(\frak{gl}_n^\prec)$ is nonempty, then $C_{\star,p'}(\frak{gl}_n^\prec)$ is nonempty for every prime $p'\!<\!p$.
\par For five families of posets $\preceq$ of height $1$ or $2$, the generating functions for graded $K$\!-modules $H_\ast(\frak{gl}_n^\preceq;K)$ are computed in \ref{HGT1} and \ref{HGT2}. Our approach has an advantage: we obtain explicit generators for (co)homology, not just a formula for dimension. This enabled us to compute the cohomology algebras w.r.t. cup products.

\subsection*{Conventions} Throughout this article, $R$ will denote a commutative unital ring. Letter $p$ will always denote a prime number, and $[n]\!=\!\{1,2,\ldots,n\}$. We omit the wedge signs $\wedge$ in the basis elements of $\Lambda^{\!k}\frak{g}$. For any partial ordering $\preceq$ we denote by $\Gamma_\preceq$ its Hasse diagram (acyclic digraph) and by $\overline{\Gamma}_\preceq$ the undirected version (simple graph). For a Lie $R$-algebra $\frak{g}$ we denote its Chevalley chain complex by $C_\star(\frak{g};R)\!=\!\Lambda^{\!\star}\frak{g}$ with boundary $\partial\,x_1\!\ldots x_k\!=\! \sum_{r<s}(-1)^{r+s}[x_r,x_s]x_1\!\ldots\widehat{x_r}\!\ldots \widehat{x_s}\!\ldots x_k$. The associated digraph is denoted by $\Gamma_{C_\star}$ (its vertices are the basis elements of modules $C_k(\frak{g};R)$, and edges correspond to nonzero entries in the boundary matrices). This digraph is used to define Morse matchings and use AMT (algebraic Morse theory, see \cite[1.1]{citeLampretVavpeticCLAAMT}). We use the notations from that formulation.\vspace{4mm}

\vspace{4mm}
\section{Posets}
In this section, we define poset Lie algebras and explore their general properties.\\[1mm]

\par Let $\sim$ be a transitive relation on a set $I$ (so it is a subset of $I\!\times\!I$). Let $\frak{gl}^\sim(R)$ be the free $R$-module on symbols $e_{ij}$ for $i\!\sim\!j$ (i.e. the free $R$-module on the elements of $\sim$). Then $[e_{ij},e_{kl}]=\delta_{jk}e_{il}\!-\!\delta_{il}e_{kj}$ is a well-defined bracket that makes $\frak{gl}^\sim$ a Lie $R$-algebra. It is isomorphic to the Lie subalgebra of $\frak{gl}_I(R)$ (all column-finitary $I\!\times\!I$ matrices with the commutator bracket) spanned by basis matrices $e_{ij}$ with $i\!\sim\!j$. If $I$ has finite cardinality $n$, then we denote it by $\frak{gl}_n^\sim$. The Lie subalgebras of $\frak{gl}_n$ that admit a module basis $\mathcal{B}\!\subseteq\! \{e_{ij}; i,j\!\in\![n]\}$ are precisely the Lie algebras $\frak{gl}_n^\sim$. Every $\frak{gl}^\sim$ is a $\Z$-graded Lie algebra via $\deg e_{ij}\!=\!i\!-\!j$.
\par When the relation $\sim$ is antisymmetric and reflexive, we denote it by $\preceq$; when $\sim$ is antisymmetric and irreflexive, we denote it by $\prec$. Any isomorphism of posets $(I,\preceq)\!\rightarrow\!(J,\sqsubseteq)$ induces isomorphisms of Lie algebras $\frak{gl}^\preceq\!\longrightarrow\!\frak{gl}^\sqsubseteq$ and $\frak{gl}^\prec\!\longrightarrow\!\frak{gl}^\sqsubset$ that send $e_{ij}\!\longmapsto\!e_{f(i)f(j)}$. Any poset morphism, however, does not canonically induce a Lie algebra morphism: if $I\!=\!\{a,b,c,d\}$ and $J\!=\!\{1,2,3\}$ are linearly ordered and $f$ sends $a\!\mapsto\!1, b\!\mapsto\!2, c\!\mapsto\!2, d\!\mapsto\!3$, then $f[e_{ab},e_{cd}]=0 \neq e_{13}=[f(e_{ab}),f(e_{cd})]$.
\par For any poset $\preceq$, our $\frak{gl}_n^\prec$ is an ideal of $\frak{gl}_n^\preceq$ and thus induces an exact sequence\vspace{-1mm} $$0\!\longrightarrow\!\frak{gl}_n^\prec \!\longrightarrow\! \frak{gl}_n^\preceq \!\longrightarrow\! \frak{dgn}_n \!\!\longrightarrow\!0,\vspace{-1mm}$$ where $\frak{dgn}_n$ is the abelian Lie algebra of diagonal matrices. Thus the Hochschild-Serre spectral sequence can be used to obtain information of $\frak{gl}_n^\prec$ from $\frak{gl}_n^\preceq$. Additionally, there is an exact sequence\vspace{-1mm}
$$0\!\longrightarrow\!\frak{sl}_n^\preceq \!\overset{\scriptstyle\subseteq}{\longrightarrow}\! \frak{gl}_n^\preceq \!\overset{\mathrm{tr}}{\longrightarrow}\! R \!\longrightarrow\!0,\vspace{-1mm}$$
and the ideal $\frak{sl}_n^\preceq$ of matrices with zero trace was recently studied in \cite{citeCollGerstenhaberCLSPPA}.\\[2mm]

The \emph{opposite} poset of $\preceq$ is the poset $\preceq^\opp$ with the same underlying set, but $i\!\preceq^\opp\!j$ iff $i\!\succeq\!j$). A \emph{subposet} of $(J,\sqsubseteq)$ is a subset $I\!\subseteq\!J$ with the induced ordering $\preceq\,=\,\sqsubseteq\cap (I\!\times\!I)$. A subposet $(I,\preceq)$ of $(J,\sqsubseteq)$ is \emph{convex} when $a,b\!\in\!I$ and $x\!\in\!J$ and $a\!\sqsubseteq\!x\!\sqsubseteq\!b$ implies $x\!\in\!I$, i.e. if it is closed for taking intervals $[a,b]\!=\! \{x\!\in\!J; a\!\sqsubseteq\!x\!\sqsubseteq\!b\}$.

\begin{Prp}\label{subsetopposite} {\rm\textbf{a)}} $H_\ast\big(\frak{gl}^{\preceq^\opp}\big) \cong H_\ast\big(\frak{gl}^{\preceq}\big)$.\\
{\rm\textbf{b)}} If $\preceq$ is a convex subposet of $\sqsubseteq$, then $C_\star(\frak{gl}^{\sqsubseteq})=C_\star(\frak{gl}^{\preceq})\!\oplus\!\ldots$.
\end{Prp}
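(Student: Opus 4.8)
The plan is to handle the two parts by producing explicit structural maps, and for (b) to pin down exactly where convexity is needed.

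For (a), I would exhibit an honest isomorphism of Lie algebras $\psi\colon\frak{gl}^{\preceq}\!\to\!\frak{gl}^{\preceq^{\opp}}$, the sign-twisted transpose $\psi(e_{ij})=-e_{ji}$. Since $i\!\preceq\!j$ iff $j\!\preceq^{\opp}\!i$, this is a bijection of the defining bases, hence an $R$-module isomorphism; and a one-line check with $[e_{ij},e_{kl}]=\delta_{jk}e_{il}-\delta_{il}e_{kj}$ gives $\psi[x,y]=[\psi x,\psi y]$, the two sign flips introduced by $\psi$ turning the transpose anti-homomorphism into a genuine homomorphism. Functoriality of the Chevalley complex then yields $C_\star(\frak{gl}^{\preceq^{\opp}})\cong C_\star(\frak{gl}^{\preceq})$ as chain complexes, hence the homology isomorphism (and likewise for cohomology, or by dualizing).

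For (b), I would first note that $\frak{gl}^{\preceq}$ sits inside $\frak{gl}^{\sqsubseteq}$ as a Lie subalgebra: for $i,j,k,l\in I$ with $i\sqsubseteq j$ and $k\sqsubseteq l$, the bracket $[e_{ij},e_{kl}]$ is $0$, or $\pm e_{il}$ when $j=k$ (and then $i\sqsubseteq l$ with $i,l\in I$), or $\pm e_{kj}$ when $i=l$ — always inside $\frak{gl}^{\preceq}$; this uses no convexity. Hence $C_\star(\frak{gl}^{\preceq})=\Lambda^{\!\star}\frak{gl}^{\preceq}$ is a subcomplex of $C_\star(\frak{gl}^{\sqsubseteq})=\Lambda^{\!\star}\frak{gl}^{\sqsubseteq}$. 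Call a basis wedge \emph{inner} if each of its generators has both indices in $I$ (equivalently, it lies in $\Lambda^{\!\star}\frak{gl}^{\preceq}$), and let $D_\star$ be the span of the non-inner basis wedges, so that $C_\star(\frak{gl}^{\sqsubseteq})=C_\star(\frak{gl}^{\preceq})\oplus D_\star$ as graded modules. The proposition then reduces to showing that $D_\star$ is a subcomplex.

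Proving $\partial D_\star\subseteq D_\star$ is the crux, and the only place convexity of $I$ in $\sqsubseteq$ enters. Up to sign, each basis wedge occurring in $\partial(e_{i_1j_1}\!\cdots e_{i_kj_k})$ has the form $e\wedge\bigwedge_{u\neq r,s}e_{i_uj_u}$ with $e\in\{e_{i_rj_s},\,e_{i_sj_r}\}$ a generator appearing in $[e_{i_rj_r},e_{i_sj_s}]=\delta_{j_ri_s}e_{i_rj_s}-\delta_{i_rj_s}e_{i_sj_r}$. If the wedge is non-inner because of a generator at some position $u\notin\{r,s\}$, that generator survives and the whole term is again non-inner (or $0$); so suppose otherwise, i.e.\ the non-inner generator is $e_{i_rj_r}$ or $e_{i_sj_s}$, and suppose the bracket is nonzero. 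Since the generators of a nonzero basis wedge are distinct, $e_{i_rj_r}\neq e_{i_sj_s}$, which excludes $\delta_{j_ri_s}=\delta_{i_rj_s}=1$ (that would force $i_r=j_r$, hence equal generators). In the case $j_r=i_s$ we get $e=e_{i_rj_s}$ with $i_r\sqsubseteq j_r=i_s\sqsubseteq j_s$; if this term were inner, then $i_r,j_s\in I$, so $j_r=i_s\in I$ by convexity, so all four indices lie in $I$ and both $e_{i_rj_r},e_{i_sj_s}$ were inner — a contradiction. The case $i_r=j_s$ is symmetric, via $i_s\sqsubseteq j_s=i_r\sqsubseteq j_r$. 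Hence $\partial$ of a non-inner wedge is a combination of non-inner wedges and $0$, so $D_\star$ is a subcomplex and $C_\star(\frak{gl}^{\sqsubseteq})=C_\star(\frak{gl}^{\preceq})\oplus D_\star$ as chain complexes. I expect this convexity step to be the genuine obstacle: dropping it really breaks $\partial$-closedness — for instance $\partial(e_{12}e_{23})=-e_{13}$ in $\frak{sol}_3$ with the non-convex subset $I=\{1,3\}$ — and the cohomology statement follows by dualizing.
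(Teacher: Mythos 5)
Your proof is correct. For part (a) you use the same transposition idea as the paper but with a sign twist, $\psi(e_{ij})=-e_{ji}$, which makes $\psi$ a genuine Lie algebra isomorphism rather than an anti\hyp{}homomorphism; the paper instead uses the plain transpose $e_{ij}\mapsto e_{ji}$, notes that $[\mathbf a,\mathbf b]^t=-[\mathbf a^t,\mathbf b^t]$, and then observes that the induced maps on $\Lambda^{\!k}$ \emph{anticommute} with $\partial$, which still gives an isomorphism on homology (kernels and images are unaffected by the sign). Your version is marginally cleaner since it sidesteps the anticommutation bookkeeping, but it is the same underlying idea. For part (b) you take a genuinely different route. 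You show directly that $\frak{gl}^{\preceq}$ is a Lie subalgebra of $\frak{gl}^{\sqsubseteq}$ (so inner wedges form a subcomplex unconditionally) and then verify by hand, using convexity, that the span $D_\star$ of non\hyp{}inner wedges is also $\partial$\hyp{}closed. The paper instead observes a stronger invariant: every summand of $\partial(v)$ has the same sets of minimal and maximal indices as $v$ (since the only nontrivial bracket $[e_{ab},e_{bc}]=e_{ac}$ removes an index $b$ that is neither minimal nor maximal among the indices of $v$). This makes $C_\star(\frak{gl}^{\sqsubseteq})$ a direct sum of subcomplexes $C_{A,B}$ indexed by min/max index sets, and convexity then says precisely that the inner wedges are the union of the $C_{A,B}$ with $A\cup B\subseteq I$. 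The paper's route thus delivers a finer decomposition as a byproduct and isolates convexity as a statement about min/max sets; your route is more elementary, makes the role of convexity visible at the exact $\partial$\hyp{}summand where it is used, and comes with a concrete counterexample ($I=\{1,3\}\subset\{1,2,3\}$, $\partial(e_{12}e_{23})=-e_{13}$) showing the hypothesis is necessary. Both are valid; the paper's invariant is reused implicitly elsewhere (e.g.\ Example~\ref{induction}), so it is worth internalizing as well.
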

Thus a poset and its opposite induce isomorphic homologies, and every convex subposet induces a direct summand on homologies.
\begin{proof}\textbf{a)} The transposition map $\frak{gl}^{\preceq^\opp} \!\longrightarrow\! \frak{gl}^\preceq$ that sends $e_{ji}\!\mapsto\!e_{ji}^t\!=\!e_{ij}$ is a module morphism that satisfies $[\mathbf{a},\mathbf{b}]^t\!=\!-[\mathbf{a}^t\!,\mathbf{b}^t]$ for all $\mathbf{a},\mathbf{b}\!\in\!\frak{gl}_n$. The induced module morphisms $\Lambda^{\!k}\frak{gl}^{\preceq^\opp} \!\longrightarrow\! \Lambda^{\!k}\frak{gl}^\preceq$ anticommute with the boundary morphisms. Hence they induce morphisms on all homology modules. The same can be said for the transposition map in the other direction, which produces the inverse morphisms.\\
\textbf{b)} For any wedge $v\!=\!e_{a_1b_1}\!\!\cdots\!e_{a_kb_k} \!\in\! \Lambda^{\!k}\frak{gl}^\sqsubseteq$ the set of indices $\epsilon(v)\!=\!\{a_1,b_1,\ldots,a_k,b_k\}$ is a subposet of $(I,\sqsubseteq)$. Since the only nonzero brackets are of the form $[e_{ab},e_{bc}]\!=\!e_{ac}$, we see that the set of maximal and minimal indices of $v$ equals the set of maximal and minimal indices of every summand of $\partial(v)$. Thus the whole chain complex $C_\star(\frak{gl}^\sqsubseteq)$ is a direct sum of subcomplexes spanned by wedges with fixed sets of maximal and minimal indices. In particular, when $(I,\preceq)$ is convex in $(J,\sqsubseteq)$, the wedges $\{v; \epsilon(v)\!\subseteq\!I\}\!\subseteq\!C_\star(\frak{gl}^\sqsubseteq)$ span the chain complex $C_\star(\frak{gl}^\preceq)$.
\end{proof}
The analogous two statements for Lie algebras $\frak{gl}_n^\prec$ also hold.

\begin{Exp}\label{induction} Let $\preceq$ be the total order on $[n]$, so that $\frak{gl}_n^\prec\!=\!\frak{nil}_n$ is the Lie algebra of all upper triangular matrices. The convex subposets of the interval $[1,n]$ are smaller intervals $[i,j]$. For $i,j\!\in\![n]$ let $C_{i,j}$ be the subcomplex of $C_\star(\frak{nil}_n)$ spanned by $\{v;\, \min\epsilon(v)\!=\!i,\max\epsilon(v)\!=\!j\}$. Notice that $C_{i,j}\cong C_{1,j-i+1}$. We have $C_\star(\frak{nil}_n)=\bigoplus_{i\leq j}\!C_{i,j}=\frac{C_{1,n}\,\oplus\,\bigoplus_{\!1<i\leq j}\!C_{i,j} \,\oplus\, \bigoplus_{\!i\leq j<n}\!C_{i,j}}{\bigoplus_{\!1<i\leq j<n}\!C_{i,j}}$. Therefore $$H_\ast(\frak{nil}_n)\cong\frac{H_\ast\!(C_{1,n})\,\oplus\,H_\ast\!(\frak{nil}_{n\!-\!1})^2}{H_\ast\!(\frak{nil}_{n\!-\!2})}$$ where $C_{1,n}$ is the subcomplex of all wedges that contain both $1$ and $n$ as indices. The analogous statement for the Lie algebra $\frak{gl}_n^\preceq\!=\!\frak{sol}_n$ also holds.
\end{Exp}\vspace{1mm}

\begin{Rmk}Disjoint unions of Hasse diagrams induce tensor products:
$$C_\star(\frak{gl}^{\preceq\sqcup\preceq'}) \cong C_\star(\frak{gl}^{\preceq}) \!\otimes\! C_\star(\frak{gl}^{\preceq'})\text{ \;\;and\;\; } C_\star(\frak{gl}^{\prec\sqcup\prec'}) \cong C_\star(\frak{gl}^{\prec}) \!\otimes\! C_\star(\frak{gl}^{\prec'}).$$ Thus from now on, we only consider connected Hasse diagrams, and ignore all mirror images. Note that nonisomorphic posets may induce isomorphic Lie algebras, e.g.\vspace{-2mm} $$\Gamma_\preceq\!: \begin{tikzpicture}[baseline=(m.center)]\matrix (m) [matrix of math nodes, row sep=8pt, column sep=8pt] {2&4\\ 1&3\\};
\draw(m-2-1)--(m-1-1); \draw(m-2-2)--(m-1-2);\end{tikzpicture}\text{ \;\;\;\;and\;\;\;\; }
\Gamma_\sqsubseteq\!: \begin{tikzpicture}[baseline=(m.center)]\matrix (m) [matrix of math nodes, row sep=8pt, column sep=8pt] {2&&3\\ &1&\\};
\draw(m-2-2)--(m-1-1); \draw(m-2-2)--(m-1-3);\end{tikzpicture}\vspace{-2mm}$$
induce $\frak{gl}_4^\prec(R)\cong\frak{gl}_3^\sqsubset(R)\cong R^2$, though $\frak{gl}_4^\preceq(R)\ncong\frak{gl}_3^\sqsubseteq(R)$, and \vspace{-2mm}
$$\Gamma_\preceq\!: \begin{tikzpicture}[baseline=(m.center)]\matrix (m) [matrix of math nodes, row sep=8pt, column sep=8pt] {2&4\\ 1&3\\};
\draw(m-2-1)--(m-1-1); \draw(m-2-1)--(m-1-2);\draw(m-2-2)--(m-1-2);\end{tikzpicture}\text{ \;\;\;\;and\;\;\;\; }
\Gamma_\sqsubseteq\!: \begin{tikzpicture}[baseline=(m.center)]\matrix (m) [matrix of math nodes, row sep=8pt, column sep=8pt] {2&3&4\\ &1&\\};
\draw(m-2-2)--(m-1-1); \draw(m-2-2)--(m-1-2); \draw(m-2-2)--(m-1-3);\end{tikzpicture}\vspace{-2mm}$$
induce \;$H_\ast(\frak{gl}_4^\prec;R)\cong H_\ast(\frak{gl}_4^\sqsubset;R)$ \;and\; $H_\ast(\frak{gl}_4^\preceq;R)\cong H_\ast(\frak{gl}_4^\sqsubseteq;R)$.
\end{Rmk}\vspace{2mm}


\begin{Prp} \!\!$H^k(\frak{gl}_n^\prec;\Z) \cong H_{N\!-k}(\frak{gl}_n^\prec;\Z)$ for all $k$, where $N\!=\!|\!\!\prec\!|\!=\!\dim\frak{gl}_n^\prec$.
\end{Prp}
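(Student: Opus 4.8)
The plan is to exhibit an explicit chain-level pairing between $C_\star(\frak{gl}_n^\prec)$ and its dual $C^\star(\frak{gl}_n^\prec)$ that identifies the two complexes, up to sign and reindexing by $N-\bullet$. The natural candidate comes from a ``top wedge'' volume element. Since $\frak{gl}_n^\prec$ has a distinguished module basis $\{e_{ij}\}_{i\prec j}$, fix an ordering of these $N$ basis vectors and let $\omega = e_{i_1j_1}\wedge\cdots\wedge e_{i_Nj_N}\in\Lambda^N\frak{gl}_n^\prec$ be the corresponding generator of the top exterior power. For each $k$, define a map $\Phi_k\colon \Lambda^k\frak{gl}_n^\prec \to \Lambda^{N-k}\frak{gl}_n^\prec$ by sending a basis wedge $v$ (a squarefree monomial in the $e_{ij}$) to $\pm v^c$, where $v^c$ is the complementary wedge (the product of the basis vectors \emph{not} appearing in $v$) and the sign is chosen so that $v\wedge v^c = \pm\omega$ matches. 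Dualizing, $C^{N-k}=\Hom(\Lambda^{N-k}\frak{gl}_n^\prec,\Z)$ has the dual basis $\{v^\vee\}$, and I would compose $\Phi$ with the basis-duality iso $\Lambda^{N-k}\cong C^{N-k}$ to get maps $\Psi_k\colon C_k\to C^{N-k}$ which are isomorphisms of $\Z$-modules in each degree.

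The crux is to check that $\Psi$ is a chain map up to sign, i.e. that $\Psi_{k-1}\circ\partial_k = \pm\, d^{N-k}\circ\Psi_k$, where $d$ is the Chevalley coboundary. Here is where the special structure is essential: as noted in the proof of \ref{subsetopposite}b, the only nonzero brackets in $\frak{gl}_n^\prec$ are $[e_{ab},e_{bc}]=e_{ac}$, so the boundary of a wedge $v$ is a signed sum over pairs $e_{ab},e_{bc}$ both occurring in $v$, replacing them by $e_{ac}$ — \emph{provided} $e_{ac}$ does not already occur in $v$ (otherwise that term vanishes as a repeated factor). The coboundary $d$ on $v^\vee$, computed against this bracket, is dually a signed sum over pairs $e_{ab},e_{ac}$ (resp. $e_{ac},e_{bc}$): it inserts $e_{bc}$ (resp. $e_{ab}$) and removes $e_{ac}$. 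Under complementation, ``removing $e_{ab},e_{bc}$ and inserting $e_{ac}$'' in $v$ becomes ``removing $e_{ac}$ and inserting $e_{ab},e_{bc}$'' in $v^c$ — which is exactly a coboundary-type move — and the degenerate cases (repeated factor vanishing) correspond on both sides. So the combinatorics of $\partial$ on $v$ and of $d$ on $(v^c)^\vee$ are literally the same set of moves, read in opposite directions.

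The remaining work, which I expect to be the genuinely fiddly part, is the \textbf{sign bookkeeping}: one must verify that the Koszul signs appearing in $\partial$ (the $(-1)^{r+s}$ from the definition) and those in $d$ combine with the complementation sign in $\Phi$ so that a single global sign $(-1)^{?}$ (possibly depending on $k$) makes $\Psi$ into an honest anti/commuting chain map. I would handle this by the standard device: first establish the sign identity on a single elementary move (one bracket $[e_{ab},e_{bc}]$ applied inside a fixed wedge $v$), reducing to a permutation-sign computation of how $e_{ab}, e_{bc}, e_{ac}$ sit relative to the rest of the monomial and its complement; then observe the per-move sign discrepancy is independent of which move, yielding a uniform sign. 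Once $\Psi$ is a (graded) chain isomorphism, it induces $H_k(C_\star)\cong H^{N-k}(C^\star)$ for all $k$, and since everything is over $\Z$ with finitely generated free chain modules, $H^{N-k}(C^\star)=H^{N-k}(\frak{gl}_n^\prec;\Z)$, giving the claim. (One could alternatively phrase this more invariantly via the nondegenerate pairing $\Lambda^k\otimes\Lambda^{N-k}\to\Lambda^N\cong\Z$ together with the fact that $d$ is the transpose of $\partial$, but the explicit complementation map makes the sign analysis most transparent.)
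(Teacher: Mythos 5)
Your proposal takes a genuinely different route from the paper. The paper's proof is two lines: it invokes Hazewinkel's duality theorem (cohomology Poincar\'e duality holds for a finite-dimensional Lie algebra iff it is unimodular) and then verifies unimodularity of $\frak{gl}_n^\prec$ by observing that $[e_{rs},e_{ij}]$ never has a nonzero coefficient at $e_{ij}$, so $\mathrm{tr}\,[e_{rs},-]=0$. You instead construct an explicit chain-level isomorphism $\Psi_k\colon C_k\to C^{N-k}$ by complementation against the top wedge $\omega$, which amounts to re-deriving the relevant direction of Hazewinkel's theorem for this particular algebra. Both arguments ultimately hinge on the same combinatorial fact that you correctly single out: the only nonzero brackets are $[e_{ab},e_{bc}]=e_{ac}$, and $e_{ac}$ is a basis vector \emph{distinct} from $e_{ab}$ and $e_{bc}$ (equivalently, all structure constants of the form $c_{rs}^{\,r}$ and $c_{rs}^{\,s}$ vanish). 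For the paper this is what makes the trace vanish; for you it is what makes the ``cross terms'' vanish in the expansion $\partial(u\wedge v)=\partial u\wedge v+(-1)^{|u|}u\wedge\partial v+(\text{cross terms})$. Your deferred sign bookkeeping can in fact be dispatched cleanly and uniformly this way: take $u\in\Lambda^{N-k}$, $v\in\Lambda^{k+1}$; then $u\wedge v\in\Lambda^{N+1}=0$ forces $\partial u\wedge v=-(-1)^{N-k}\,u\wedge\partial v$, which is exactly the adjointness $\langle\partial u,v\rangle_\omega=-(-1)^{N-k}\langle u,\partial v\rangle_\omega$ needed to show $\Psi$ is a (degree-twisted) chain map; that settles both the existence of a uniform sign and its value, without a move-by-move check. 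Be aware, though, that this uniformity is \emph{not} a purely Koszul-formal phenomenon: it is exactly the hypothesis $c_{rs}^{\,r}=c_{rs}^{\,s}=0$ entering, and you should make that dependence explicit rather than asserting the per-move discrepancy is automatically constant. Your approach buys self-containment and makes the duality visible at the chain level; the paper's is shorter by outsourcing the duality machinery to the cited theorem.
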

\noindent However, $H^k(\frak{gl}_n^\prec;\Z)\ncong H_{N\!-k}(\frak{gl}_n^\prec;\Z)$ for all $k$, where $N\!=\!|\!\preceq\!|\!=\!\dim\frak{gl}_n^\preceq\!>\!n$, since by \cite[5.8]{citeLampretVavpeticCLAAMT} the free part is $\Z^{\binom{n}{k}}$\!, which is symmetric iff $N\!=\!n$ iff $\preceq\,=\!\{(i,i); i\!\in\![n]\}$.
\begin{proof} By Hazewinkel's theorem \cite{citeHazewinkelDTCLA}, it suffices to prove that $\frak{gl}_n^\prec$ is unimodular, i.e. for every $a\!\in\!\frak{gl}_n^\prec$ the linear map $[a,-]\!: \frak{gl}_n^\prec\!\rightarrow\!\frak{gl}_n^\prec$ has trace $0$. Since $a\!=\!\sum_{r\prec s}\alpha_{rs}e_{rs}$ and $\mathrm{tr}[a,-]=\sum_{r\prec s}\alpha_{rs}\,\mathrm{tr}[e_{rs},-]$, it suffices to prove $\mathrm{tr}[e_{rs},-]\!=\!0$. Indeed, in the basis $\{e_{ij}; i\!\prec\!j\}$, the only elements that are not annihilated are $e_{st}$ and $e_{tr}$, but they are mapped to $e_{rt}$ and $-e_{ts}$, so the coefficient of $[e_{rs},e_{ij}]$ at $e_{ij}$ is $0$.
\end{proof}
Thus $H_\ast(\frak{gl}_n^\prec)$ obeys Poincar\'{e} duality: free part appears symmetrically ($FH_k\cong FH_{N\!-k}$) and torsion appears symmetrically shifted by one ($T_pH_k\cong T_pH_{N\!-k-1}$).

\vspace{4mm}
\section{Generating functions}
\vspace{2mm}
Let $K$ be a field and $M\!=\!\bigoplus_{k\in\N} M_k$ a graded $K$-module such that every $M_k$ is finite dimensional. The \emph{Hilbert-Poincar\'{e} series} of $M$ is the formal power series\vspace{-0mm} $$\textstyle{\HP_{\!M}(t)=\sum_k\dim_KM_k\,t^k\in\Z[[t]].}\vspace{-0mm}$$
There holds $\HP_{\!M/N}=\HP_{\!M}-\HP_{\!N}$, $\HP_{\!M\oplus N}=\HP_{\!M}+\HP_{\!N}$, $\HP_{\!M\otimes N}=\HP_{\!M}\cdot\HP_{\!N}$.
\par For a Lie $K$-algebra $\frak{g}$ we denote the Hilbert-Poincar\'{e} series of its homology by $\HP_{\!C\!_\star\!(\frak{g})}(t)= \sum_k\dim_K\!H_k(\frak{g})\, t^k\!\in\!\Z[[t]]$, when $\dim_K\!H_k(\frak{g})\!<\!\infty$ for all $k$. Thus $\HP$ contains all information about the (co)homology of $\frak{g}$ over $K$. There always holds $\HP_{\!C\!_\star\!(\frak{g})}(-1)= \sum_k(-1)^k\dim_K\!H_k(\frak{g})= \sum_k(-1)^k\dim_K\!\Lambda^{\!k}(\frak{g})= \sum_k(-1)^k\binom{\dim\frak{g}}{k}= (1-1)^{\dim\frak{g}}=0$. If $K\!\leq\!K'$ is a field extension, then  $\HP_{\!C_\star(\frak{g};K)} \!=\! \HP_{\!C_\star(\frak{g};K')}$ by the universal coefficient theorem, so it suffices to compute over fields $\Q$ and $\Z_p$.
\par We obtained some information about $\HP$ for Lie algebras $\frak{gl}_n^\preceq$ and $\frak{gl}_n^\prec$. If $K$ has characteristic $0$ or more than $n$, then by \cite[5.8]{citeLampretVavpeticCLAAMT} there holds \smash{$\HP_{\!C\!_\star\!(\frak{gl}_n^\preceq)}(t)\!=\!(1\!+\!t)^n$}. Also, $\HP_{\!C\!_\star\!(\frak{gl}^{\preceq\sqcup\preceq'})} \!=\! \HP_{\!C\!_\star\!(\frak{gl}^{\preceq})} \cdot \HP_{\!C\!_\star\!(\frak{gl}^{\preceq'})}$ and $\HP_{\!C\!_\star\!(\frak{gl}^{\prec\sqcup\prec'})} \!=\! \HP_{\!C\!_\star\!(\frak{gl}^{\prec})} \cdot \HP_{\!C\!_\star\!(\frak{gl}^{\prec'})}$.
\par In the Chevalley chain complex $C_\star(\frak{\frak{gl}_n^\preceq})$, the \emph{weight vector} of a wedge $v$ of basis matrices $e_{ij}$ is the vector $w_v\!=\!(w_1,\ldots,w_n)\in\Z^n$ in which $w_i$ is the number of times $i$ appears as a right index $e_{ai}\!\in\!v$ minus the number of times it appears as a left index $e_{ib}\!\in\!v$. In symbols, $w_i=|\{a\!\in\![n]; e_{ai}\!\in\!v\}|\!-\!|\{b\!\in\![n]; e_{ib}\!\in\!v\}|$. Thus $-n\!<\!w_i\!<\!n$ for all $i$, and if $i$ is a minimal (resp. maximal) element of $\preceq$, then $w_i\!\leq\!0$ (resp. $w_i\!\geq\!0$). If our field $K$ has characteristic $p$, then by \cite[6.1]{citeLampretVavpeticCLAAMT} we have\vspace{-1mm} $$\HP_{\!C\!_\star\!(\frak{gl}_n^\preceq)}(t)=(1\!+\!t)^n\cdot\HP_{\!C_{\star,p}(\frak{gl}_n^\prec)}(t)\vspace{-1mm}$$
where \emph{$p$-complex} $C_{\star\!,p}(\frak{gl}_n^\prec)$ is the chain subcomplex (direct summand) of $C_\star(\frak{gl}_n^\prec)$, spanned by all wedges $v$ with $w_v\!\in\!(p\Z)^n$, i.e. weights are noninvertible in \smash{$\Z_p\!=\!\frac{\Z}{p\Z}$}. We call $C_{\star\!,p}(\frak{gl}_n^\prec)$ \emph{empty} when it is spanned by just the empty wedge $\emptyset$.
\par Since every edge in the digraph $\Gamma_{C_\star(\frak{gl}^\prec)}$ is of the form $\begin{smallmatrix}\ldots e_{ax}e_{xb}\ldots\\[-1pt] \downarrow\\[-2pt] \ldots e_{ab}\ldots\end{smallmatrix}$, edges preserve the weight vector, i.e. all summands of $\partial(v)$ have the same weight vector $w_v$.

\begin{Crl} $C_\star(\frak{gl}_n^\prec)=\bigoplus_{w\in\Z^n}\!C_{[w]}$ and $C_{\star,p}(\frak{gl}_n^\prec)=\bigoplus_{w\in(p\Z)^n}\!C_{[w]}$, where $C_{[w]}$ is the subcomplex spanned by all wedges with weight vector $w$.
\end{Crl}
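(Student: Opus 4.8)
The plan is to read the decomposition directly off the two facts already assembled above: that the standard basis of each $\Lambda^{\!k}\frak{gl}_n^\prec$ is partitioned according to weight vectors, and that the Chevalley boundary preserves the weight vector of a wedge. So there is essentially nothing to prove beyond assembling these observations, and I do not expect any real obstacle; the only care needed is in the bookkeeping.

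First I would fix a degree $k$ and let $\mathcal{B}_k$ be the standard basis of $C_k(\frak{gl}_n^\prec)$, consisting of wedges $v=e_{a_1b_1}\!\cdots\!e_{a_kb_k}$ whose factors are distinct elements of $\prec$ listed in a fixed linear order. The weight vector $w_v\in\Z^n$ is well defined on $v$ (it is a count of right-index minus left-index occurrences, hence independent of the order of the factors), and $v\mapsto w_v$ partitions $\mathcal{B}_k$ into the fibers $\mathcal{B}_{k,w}=\{v\in\mathcal{B}_k;\ w_v=w\}$. Putting $C_{[w],k}=\langle\mathcal{B}_{k,w}\rangle$, one gets $C_k(\frak{gl}_n^\prec)=\bigoplus_{w\in\Z^n}C_{[w],k}$ as $R$-modules, for every $k$, with only finitely many nonzero summands in each degree since $-n<w_i<n$.

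Next I would check that each graded submodule $C_{[w]}=\bigoplus_k C_{[w],k}$ is closed under $\partial$. This is precisely the last displayed observation before the Corollary: in $\frak{gl}^\prec$ the only nonzero brackets are $[e_{ax},e_{xb}]=e_{ab}$ (antisymmetry of $\prec$ annihilates the other summand of $[e_{ij},e_{kl}]=\delta_{jk}e_{il}-\delta_{il}e_{kj}$), so every nonzero edge of $\Gamma_{C_\star(\frak{gl}^\prec)}$ has the form $\ldots e_{ax}e_{xb}\ldots\to\ldots e_{ab}\ldots$; such a contraction destroys one right-occurrence and one left-occurrence of the index $x$ and touches no other index, hence leaves $w$ unchanged. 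Therefore every summand of $\partial v$ has weight vector $w_v$, so $\partial$ respects the $\Z^n$-grading and the module decomposition above is in fact a decomposition of chain complexes, which is the first assertion. The second assertion is then immediate: by the definition recalled just above, $C_{\star,p}(\frak{gl}_n^\prec)$ is the span of all wedges $v$ with $w_v\in(p\Z)^n$, i.e. exactly the sub-sum $\bigoplus_{w\in(p\Z)^n}C_{[w]}$ of the decomposition just established, and as a sub-sum of a direct sum of chain complexes it is again a subcomplex.
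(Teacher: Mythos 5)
Your proof is correct and follows exactly the paper's (unstated) argument: partition the standard basis of each $\Lambda^{\!k}\frak{gl}_n^\prec$ by weight vector, then observe that since every nonzero term of $\partial$ comes from a contraction $e_{ax}e_{xb}\mapsto e_{ab}$, the boundary preserves the weight vector, so the module decomposition is a decomposition of chain complexes; the $p$-complex case then follows from its definition as the span of wedges with $w_v\in(p\Z)^n$. The paper indeed leaves this Corollary without a proof because the remark immediately preceding it is the whole argument, which you have correctly reassembled.
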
 This holds over any ring, and it is very useful for computational purposes, since each boundary $\partial_k$ is a huge matrix in block diagonal form, and we only have to deal with the very many much smaller blocks. Even if we are working with sparse matrices, it is much more effective to deal with each $C_{[w]}$ separately.
\par Similarly, $C_\star(\frak{gl}_n^\preceq)=\bigoplus_{w\in\Z^n}\!C_{[w]}$ over any ring. Here the isolated vertices of $\Gamma_{C_\star}$ (the wedges of diagonals $e_{ii}$, which by \cite[5.8]{citeLampretVavpeticCLAAMT} generate the free part in $H_\ast(\frak{gl}_n^\preceq;\Z)$) constitute the subcomplex $C_{[0]}$, and for any $w\!=\!(w_1,\ldots,w_n)\!\neq\!0$ we notice that $\gcd(w_1,\ldots,w_n)\!=\!1$ implies $C_{[w]}\simeq0$, by \cite[6.1]{citeLampretVavpeticCLAAMT}. Thus many $C_{[w]}$ can be omitted from the direct sum without changing the homology of the complex.
\par Of course, the usefulness of this only becomes apparent when we are able to generate each $C_{[w]}$ (without first generating the whole $C_\star(\frak{gl}_n^\preceq)$). At present, we only have an algorithm for constructing all $C_{[w]}$ in $C_\star(\frak{sol}_n)$ and $C_\star(\frak{nil}_n)$.

\begin{Rmk}\label{DFT} Recall that the \emph{height} of a poset $\preceq$ is $\max\{r; \exists x_0\!\prec\!x_1\!\prec\!\ldots\!\prec\!x_r\}$, i.e. the length of the longest chain in the poset. In sections \ref{HGT1} and \ref{HGT2}, we will see that when height is 1 or 2, the $p$-complex is small enough to allow direct computation.
There, we shall also be using a lemma from the theory of discrete Fourier transforms: given $\sum_{k\in\N}c_kt^k\!=\!f(t)\in\C[[t]]$ and $p\!\in\!\N$, the sum of every $p$-th term is $\sum_{k\in p\N}c_kt^k=\frac{1}{p}\sum_{i\in[p]}f(\epsilon^it)$ where $\epsilon\!=\!e^{2\pi\mathbf{i}/p}$. More generally, 
$\textstyle{\sum_{k\in p\N}c_kt^{jk+l}=\frac{t^l}{p}\sum_{i\in[p]}f(\epsilon^it^j).}$
\end{Rmk}

\begin{Rmk}\label{cupproduct} Over a field, any chain complex $C_\star$ contains submodules $C'_\star$ with $\partial'\!=\!0$ and $C''_\star\simeq0$, such that $C_\star= C'_\star\oplus C''_\star$ and therefore $H_\ast(C_\star)=C'_\ast$. Indeed, any submodule is a direct summand, so $\Im\partial_{k+1}\leq\Ker\partial_k\leq C_k$ implies there exist $D'_k,D''_k\leq C_k$ with $\Im\partial_{k+1}\!\oplus\!D_k'\!=\!\Ker\partial_k$ and $\Im\partial_{k+1}\!\oplus\!D_k'\!\oplus\!D_k''\!=\!C_k$, hence $\partial|_{D'_k}\!=\!0$ and \smash{$\partial\!:D''_k\!\overset{\cong}{\longrightarrow}\!\Im\partial_k$}, so $C'_k\!=\!D'_k$ and $C''_k\!=\!\Im\partial_{k+1}\!\oplus\!D_k''$ suffice.
\par Finding the submodule $C'_\star$ can make the computation of cup products possible. If $C_\star$ is the Chevalley complex, then for $\alpha\!\in\!H^i(\frak{g})$ and $\beta\!\in\!H^j(\frak{g})$ the rule\vspace{-1mm} $$(\alpha\!\smile\!\beta)(x_1\!\ldots x_{i+j})= \!\!\sum_{\substack{\scriptscriptstyle\pi\in S_{i+j},\,\pi_1<\ldots<\pi_i,\\\scriptscriptstyle \pi_{i+1}<\ldots<\pi_{i+j}}}\!\!\! \mathrm{sgn}\pi\,\alpha(x_{\pi_1}\!\ldots x_{\pi_i})\, \beta(x_{\pi_{i+1}}\!\ldots x_{\pi_{i+j}})\vspace{-1mm}$$ makes $H^\ast(\frak{g})$ a graded-commutative algebra. If $C'_\star\!=\! H_\ast(C_\star)$ admits a finite basis consisting of wedges $\{v_i; i\!\in\!I\}$, then $C'^\star\!=\!H^\ast(C_\star)$ has a dual basis $\mathcal{B}\!=\!\{\chi_{v_i}; i\!\in\!I\}$, and there holds $\chi_{v_i}\!\!\smile\!\chi_{v_j}=\chi_{v_i\wedge v_j}$ with $\chi_v\!=\!0$ iff $v\!\notin\!\mathcal{B}$.
\end{Rmk}


\vspace{4mm}
\section{Homology tables}
It is always desirable to have concrete examples of objects under inspection. We provide two tables for the homology of families that are studied in later sections. In each of the two cases, the first row is a sequence of Hasse diagrams $\Gamma\!_\preceq$, and under each of them is the graded module $H_\ast(\frak{gl}^\preceq)$.

\vspace{3pt}{\small\label{3.tableCompleteBipartite}
\begin{longtable}[c]{@{\hspace{0pt}}l@{\hspace{5pt}} l@{\hspace{5pt}} l@{\hspace{5pt}} l@{\hspace{5pt}} l@{\hspace{5pt}} l@{\hspace{5pt}} l@{\hspace{5pt}}}
$\!\!\!\!\!\begin{tikzpicture}\matrix (m) [matrix of math nodes, row sep=17pt, column sep=5pt]{
3&4\\
1&2\\ };
\draw(m-2-1)--(m-1-1); \draw(m-2-1)--(m-1-2); \draw(m-2-2)--(m-1-1); \draw(m-2-2)--(m-1-2);\end{tikzpicture}$\!\!\!&
$\!\!\!\!\!\begin{tikzpicture}\matrix (m) [matrix of math nodes, row sep=17pt, column sep=2pt]{
4&&5\\
1&2&3\\ };
\draw(m-2-1)--(m-1-1);\draw(m-2-1)--(m-1-3); \draw(m-2-2)--(m-1-1);\draw(m-2-2)--(m-1-3); \draw(m-2-3)--(m-1-1);\draw(m-2-3)--(m-1-3);\end{tikzpicture}$&
$\!\!\!\!\!\begin{tikzpicture}\matrix (m) [matrix of math nodes, row sep=17pt, column sep=-0pt]{
5&&&6\\
1&2&3&4\\ };
\draw(m-2-1)--(m-1-1);\draw(m-2-1)--(m-1-4); \draw(m-2-2)--(m-1-1);\draw(m-2-2)--(m-1-4); \draw(m-2-3)--(m-1-1);\draw(m-2-3)--(m-1-4); \draw(m-2-4)--(m-1-1);\draw(m-2-4)--(m-1-4); \end{tikzpicture}$&
$\!\!\!\!\!\begin{tikzpicture}\matrix (m) [matrix of math nodes, row sep=17pt, column sep=4pt]{
4&5&6\\
1&2&3\\ };
\draw(m-2-1)--(m-1-1); \draw(m-2-1)--(m-1-2); \draw(m-2-1)--(m-1-3);
\draw(m-2-2)--(m-1-1); \draw(m-2-2)--(m-1-2); \draw(m-2-2)--(m-1-3);
\draw(m-2-3)--(m-1-1); \draw(m-2-3)--(m-1-2); \draw(m-2-3)--(m-1-3); \end{tikzpicture}$&
$\!\!\!\!\!\begin{tikzpicture}\matrix (m) [matrix of math nodes, row sep=17pt, column sep=-0pt]{
 &6& &7&\\
1&2&3&4&5\\ };
\draw(m-2-1)--(m-1-2); \draw(m-2-2)--(m-1-2); \draw(m-2-3)--(m-1-2); \draw(m-2-4)--(m-1-2); \draw(m-2-5)--(m-1-2);
\draw(m-2-1)--(m-1-4); \draw(m-2-2)--(m-1-4); \draw(m-2-3)--(m-1-4); \draw(m-2-4)--(m-1-4); \draw(m-2-5)--(m-1-4); \end{tikzpicture}$&
$\!\!\!\!\!\begin{tikzpicture}\matrix (m) [matrix of math nodes, row sep=17pt, column sep=-0pt]{
5&6&7&\\
1&2&3&4\\ };
\draw(m-1-1)--(m-2-1); \draw(m-1-1)--(m-2-2); \draw(m-1-1)--(m-2-3); \draw(m-1-1)--(m-2-4);
\draw(m-1-2)--(m-2-1); \draw(m-1-2)--(m-2-2); \draw(m-1-2)--(m-2-3); \draw(m-1-2)--(m-2-4);
\draw(m-1-3)--(m-2-1); \draw(m-1-3)--(m-2-2); \draw(m-1-3)--(m-2-3); \draw(m-1-3)--(m-2-4); \end{tikzpicture}$\\[-1mm]
$\Z$& $\Z$& $\Z$& $\Z$& $\Z$& $\Z$\\
$\Z^4$& $\Z^5$& $\Z^6$& $\Z^6$& $\Z^7$& $\Z^7$\\
$\Z^6$& $\Z^{10}$& $\Z^{15}$& $\Z^{15}$& $\Z^{21}$& $\Z^{21}$\\
$\Z^4$& $\Z^{10}$& $\Z^{20}$& $\Z^{20}$& $\Z^{35}$& $\Z^{35}$\\
$\Z\!\oplus\!\Z_2$& $\Z^5\!\oplus\!\Z_2^3$& $\Z^{15}\!\oplus\!\Z_2^6$& $\Z^{15}\!\oplus\!\Z_2^9$& $\Z^{35}\!\oplus\!\Z_2^{10}$& $\Z^{35}\!\oplus\!\Z_2^{18}$\\
$\Z_2^3$& $\Z\!\oplus\!\Z_2^{12}$& $\Z^6\!\oplus\!\Z_2^{30}$& $\Z^6\!\oplus\!\Z_2^{45}$& $\Z^{21}\!\oplus\!\Z_2^{60}$& $\Z^{21}\!\oplus\!\Z_2^{108}$\\
$\Z_2^3$& $\Z_2^{18}$& $\Z\!\oplus\!\Z_2^{60}$& $\Z\!\oplus\!\Z_2^{96}$& $\Z^7\!\oplus\!\Z_2^{150}$& $\Z^{7}\!\oplus\!\Z_2^{294}$\\
$\Z_2$& $\Z_2^{12}$& $\Z_2^{60}$& $\Z_2^{120}$& $\Z\!\oplus\!\Z_2^{200}$& $\Z\!\oplus\!\Z_2^{504}$\\
$0$&$\Z_2^3$& $\Z_2^{31}$& $\Z_2^{105}$& $\Z_2^{155}$& $\Z_2^{651}$\\
&$0$& $\Z_2^{11}$& $\Z_2^{69}\!\oplus\!\Z_3$& $\Z_2^{90}$& $\Z_2^{714}\!\!\oplus\!\Z_3^4$\\
&$0$& $\Z_2^{10}$& $\Z_2^{30}\!\oplus\!\Z_3^5$& $\Z_2^{85}$& $\Z_2^{693}\!\!\oplus\!\Z_3^{24}$\\
&$0$& $\Z_2^{10}$& $\Z_2^6\!\oplus\!\Z_3^{10}$& $\Z_2^{100}$& $\Z_2^{564}\!\!\oplus\!\Z_3^{60}$\\
&&$\Z_2^5$& $\Z_3^{10}$& $\Z_2^{75}$& $\Z_2^{339}\!\!\oplus\!\Z_3^{80}$\\
&&$\Z_2$& $\Z_3^5$& $\Z_2^{30}$& $\Z_2^{126}\!\!\oplus\!\Z_3^{60}$\\
&&$0$&$\Z_3$& $\Z_2^5$& $\Z_2^{21}\!\!\oplus\!\Z_3^{24}$\\
&&&$0$&$0$& $\Z_3^4$\\
&&&&$0$&$0$\\
&&&&$0$&$0$\\
&&&&&$0$\\
&&&&&$0$\\
\end{longtable}}\vspace{-6pt}

\vspace{3pt}{\small\label{3.tableDiamond}
\begin{longtable}[c]{@{\hspace{0pt}}l@{\hspace{5pt}} l@{\hspace{5pt}} l@{\hspace{5pt}} l@{\hspace{5pt}} l@{\hspace{5pt}} l@{\hspace{5pt}} l@{\hspace{5pt}}}
$\!\!\!\!\!\begin{tikzpicture}\matrix (m) [matrix of math nodes, row sep=10pt, column sep=2pt]{
 &4&\\
2& &3\\
 &1&\\ };
\draw(m-3-2)--(m-2-1); \draw(m-3-2)--(m-2-3);
\draw(m-2-1)--(m-1-2); \draw(m-2-3)--(m-1-2);\end{tikzpicture}$\!\!\!&
$\begin{tikzpicture}\matrix (m) [matrix of math nodes, row sep=10pt, column sep=6pt]{
 &5&\\
2&3&4\\
 &1&\\ };
\draw(m-3-2)--(m-2-1); \draw(m-3-2)--(m-2-2); \draw(m-3-2)--(m-2-3);
\draw(m-2-1)--(m-1-2); \draw(m-2-2)--(m-1-2); \draw(m-2-3)--(m-1-2);\end{tikzpicture}$&
$\begin{tikzpicture}\matrix (m) [matrix of math nodes, row sep=10pt, column sep=-2pt]{
 &&6&\\
2&3&&4&5\\
 &&1&&\\ };
\draw(m-3-3)--(m-2-1); \draw(m-3-3)--(m-2-2); \draw(m-3-3)--(m-2-4); \draw(m-3-3)--(m-2-5);
\draw(m-2-1)--(m-1-3); \draw(m-2-2)--(m-1-3); \draw(m-2-4)--(m-1-3); \draw(m-2-5)--(m-1-3);\end{tikzpicture}$&
$\begin{tikzpicture}\matrix (m) [matrix of math nodes, row sep=10pt, column sep=-2pt]{
 &&7&\\
2&3&4&5&6\\
 &&1&&\\ };
\draw(m-3-3)--(m-2-1); \draw(m-3-3)--(m-2-2); \draw(m-3-3)--(m-2-3); \draw(m-3-3)--(m-2-4); \draw(m-3-3)--(m-2-5);
\draw(m-2-1)--(m-1-3); \draw(m-2-2)--(m-1-3); \draw(m-2-3)--(m-1-3); \draw(m-2-4)--(m-1-3); \draw(m-2-5)--(m-1-3);\end{tikzpicture}$&
$\begin{tikzpicture}\matrix (m) [matrix of math nodes, row sep=10pt, column sep=-2pt]{
 &&&8\\
2&3&4&&5&6&7\\
 &&&1\\ };
\draw(m-3-4)--(m-2-1); \draw(m-3-4)--(m-2-2); \draw(m-3-4)--(m-2-3); \draw(m-3-4)--(m-2-5); \draw(m-3-4)--(m-2-6); \draw(m-3-4)--(m-2-7);
\draw(m-2-1)--(m-1-4); \draw(m-2-2)--(m-1-4); \draw(m-2-3)--(m-1-4); \draw(m-2-5)--(m-1-4); \draw(m-2-6)--(m-1-4); \draw(m-2-7)--(m-1-4);\end{tikzpicture}$\\[-1mm]
$\Z$& $\Z$& $\Z$& $\Z$& $\Z$\\
$\Z^4$& $\Z^5$& $\Z^6$& $\Z^7$& $\Z^8$\\
$\Z^6$& $\Z^{10}$& $\Z^{15}$& $\Z^{21}$& $\Z^{28}$\\
$\Z^4\!\oplus\!\Z_2$& $\Z^{10}\!\oplus\!\Z_2$& $\Z^{20}\!\oplus\!\Z_2$& $\Z^{35}\!\oplus\!\Z_2$& $\Z^{56}\!\oplus\!\Z_2$\\
$\Z^1\!\oplus\!\Z_2^3$& $\Z^5\!\oplus\!\Z_2^5$& $\Z^{15}\!\oplus\!\Z_2^8$& $\Z^{35}\!\oplus\!\Z_2^{12}$& $\Z^{70}\!\oplus\!\Z_2^{17}$\\
$\Z_2^3\!\oplus\!\Z_3$& $\Z\!\oplus\!\Z_2^{10}\!\oplus\!\Z_3^2$& $\Z^6\!\oplus\!\Z_2^{25}\!\oplus\!\Z_3^2$& $\Z^{21}\!\oplus\!\Z_2^{51}\!\oplus\!\Z_3$& $\Z^{56}\!\oplus\!\Z_2^{91}\!\oplus\!\Z_3$\\
$\Z_2\!\oplus\!\Z_3^3$& $\Z_2^{10}\!\oplus\!\Z_3^8$& $\Z\!\oplus\!\Z_2^{40}\!\oplus\!\Z_3^{10}$& $\Z^7\!\oplus\!\Z_2^{110}\!\!\oplus\!\Z_3^7$& $\Z^{28}\!\oplus\!\Z_2^{245}\!\!\oplus\!\Z_3^{13}$\\
$\Z_3^3$& $\Z_2^5\!\oplus\!\Z_4\!\oplus\!\Z_3^{12}$& $\Z_2^{35}\!\oplus\!\Z_4^3\!\oplus\!\Z_3^{20}$& $\Z\!\oplus\!\Z_2^{136}\!\!\oplus\!\Z_4^5\!\oplus\!\Z_3^{21}$& $\Z^8\!\oplus\!\Z_2^{390}\!\!\oplus\!\Z_4^5\!\oplus\!\Z_3^{63}$\\
$\Z_3$& $\Z_2\!\oplus\!\Z_4^4\!\oplus\!\Z_3^8$& $\Z_2^{16}\!\oplus\!\Z_4^{15}\!\oplus\!\Z_3^{20}$& $\Z_2^{103}\!\!\oplus\!\Z_4^{30}\!\oplus\!\Z_3^{35}$& $\Z\!\oplus\!\Z_2^{411}\!\!\oplus\!\Z_4^{35}\!\oplus\!\Z_3^{161}$\\
$0$& $\Z_4^6\!\oplus\!\Z_3^2$& $\Z_2^3\!\oplus\!\Z_4^{30}\!\oplus\!\Z_3^{10}\!\oplus\!\Z_5$& $\Z_2^{58}\!\oplus\!\Z_4^{75}\!\oplus\!\Z_3^{35}\!\oplus\!\Z_5^4$& $\Z_2^{357}\!\!\oplus\!\Z_4^{105}\!\!\oplus\!\Z_3^{245}\!\!\oplus\!\Z_5^9$\\
& $\Z_4^4$&$\Z_4^{30}\!\oplus\!\Z_3^2\!\oplus\!\Z_5^5$& $\Z_2^{41}\!\oplus\!\Z_4^{100}\!\!\oplus\!\Z_3^{21}\!\oplus\!\Z_5^{24}$& $\Z_2^{351}\!\!\oplus\!\Z_4^{175}\!\!\oplus\!\Z_3^{231}\!\!\oplus\!\Z_5^{63}$\\
& $\Z_4$&$\Z_4^{15}\!\oplus\!\Z_5^{10}$& $\Z_2^{36}\!\oplus\!\Z_4^{75}\!\oplus\!\Z_3^8\!\oplus\!\Z_5^{60}$& $\Z_2^{365}\!\!\oplus\!\Z_4^{175}\!\!\oplus\!\Z_3^{138}\!\!\oplus\!\Z_5^{189}$\\
& $0$&$\Z_4^3\!\oplus\!\Z_5^{10}$& $\Z_2^{27}\!\oplus\!\Z_4^{30}\!\oplus\!\Z_3^7\!\oplus\!\Z_5^{80}$& $\Z_2^{315}\!\!\oplus\!\Z_4^{105}\!\!\oplus\!\Z_3^{78}\!\oplus\!\Z_5^{315}$\\
& &    $\Z_5^5$& $\Z_2^{22}\!\oplus\!\Z_4^5\!\oplus\!\Z_3^{15}\!\oplus\!\Z_5^{60}$& $\Z_2^{245}\!\!\oplus\!\Z_4^{35}\!\oplus\!\Z_3^{111}\!\!\oplus\!\Z_5^{315}\!\!\oplus\!\Z_7$\\
& &    $\Z_5$& $\Z_2^{21}\!\oplus\!\Z_3^{20}\!\oplus\!\Z_5^{24}$& $\Z_2^{215}\!\!\oplus\!\Z_4^{5}\!\oplus\!\Z_3^{175}\!\!\oplus\!\Z_5^{189}\!\!\oplus\!\Z_7^7$\\
& &    $0$&$\Z_2^{15}\!\oplus\!\Z_3^{15}\!\oplus\!\Z_5^4$& $\Z_2^{180}\!\!\oplus\!\Z_3^{175}\!\!\oplus\!\Z_5^{63}\!\oplus\!\Z_7^{21}$\\
& &       &$\Z_2^6\!\oplus\!\Z_3^6$& $\Z_2^{105}\!\!\oplus\!\Z_3^{105}\!\!\oplus\!\Z_5^{9}\!\oplus\!\Z_7^{35}$\\
& &       &$\Z_2\!\oplus\!\Z_3$& $\Z_2^{35}\!\oplus\!\Z_3^{35}\!\oplus\!\Z_7^{35}$\\
& &       &$0$& $\Z_2^{5}\!\oplus\!\Z_3^{5}\!\oplus\!\Z_7^{21}$\\
& &       &   & $\Z_7^7$\\
& &       &   & $\Z_7$\\
& &       &   & $0$\\
\end{longtable}}\vspace{2pt}

\vspace{4mm}
\section{Torsion properties}

\begin{Prp}\label{IntervalTorsion} For any $a,b\!\in\![n]$ and any $m\!\in\!\N$ such that the interval $[a,b]$ in $\preceq$ contains more than $m$ elements, there is a direct summand $\Z_m$ in $H_\ast(\frak{gl}_n^\preceq;\Z)$.
\end{Prp}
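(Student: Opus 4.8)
The plan is to reduce immediately to the case of a chain, using the direct-summand machinery already available. By Proposition~\ref{subsetopposite}b (and its analog for $\frak{gl}^\preceq$, which is noted to hold), the interval $[a,b]$ is a convex subposet of $\preceq$, so $C_\star(\frak{gl}_n^{\preceq})$ contains $C_\star(\frak{gl}^{[a,b]})$ as a direct summand; hence it suffices to exhibit a $\Z_m$ summand in $H_\ast(\frak{gl}^{[a,b]};\Z)$. Now $[a,b]$ is a poset with a least element $a$ and a largest element $b$, and it has $>m$ elements. Within $[a,b]$, pick a maximal chain $a=x_0\prec x_1\prec\cdots\prec x_\ell=b$; this need not have length $\geq m$, so the genuinely useful reduction is different: I would instead choose \emph{any} subset of $[a,b]$ of size exactly $m+1$ containing $a$ and $b$ — but that subset is in general not convex. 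So the right move is to first handle the case where $[a,b]$ \emph{is} a chain on $m+1$ vertices (equivalently $\frak{gl}^{[a,b]}=\frak{sol}_{m+1}$), prove there is a $\Z_m$ summand there, and then argue that adding more elements and relations to the interval does not destroy this summand.

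For the chain case, I would invoke the weight-vector decomposition $C_\star(\frak{gl}^\preceq)=\bigoplus_{w}C_{[w]}$ from the Corollary and localize at a single, carefully chosen weight vector $w$ for which $C_{[w]}$ is small and explicitly computable. The natural choice when $[a,b]=\{a=1\prec 2\prec\cdots\prec m+1\}$ is to look at the part of $C_\star(\frak{sol}_{m+1})$ built from the "long" generator $e_{1,m+1}$ together with the diagonal idempotents, or more precisely at the subcomplex $C_{1,m+1}$ of Example~\ref{induction} (wedges whose index set has min $1$ and max $m+1$). The key computation is that a suitable small piece of this complex is, up to sign, the reduced chain complex of a simplex boundary or a cyclic/Koszul-type complex whose homology is exactly $\Z_m$: concretely, the maps $e_{1,j}\wedge e_{j,m+1}\mapsto e_{1,m+1}$ for $j=2,\dots,m$ assemble, after wedging with the right diagonal elements to fix the weight, into a boundary matrix whose Smith normal form has a single elementary divisor equal to $m$. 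This is the same mechanism that forces the $\Z_{n-2}$ summand in $H_\ast(\frak{nil}_n;\Z)$ advertised in the introduction, specialized to $\frak{sol}$; I expect the cleanest route is an explicit acyclic matching on $C_{[w]}$ via AMT leaving a Morse complex of the form $\Z\xrightarrow{\ \cdot m\ }\Z$ in two consecutive degrees.

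Finally, to pass from the chain to an arbitrary interval $[a,b]$ with $>m$ elements, I would again use convexity and the block decomposition: choose a chain $C$ of length as large as possible inside $[a,b]$; if it already has $\geq m+1$ elements we are essentially done after the chain computation (a chain is convex in $[a,b]$ only if it is an interval, which it need not be, so here one instead restricts attention to the single weight vector $w$ supported on $C\cup\{a,b\}$ and checks that the differential on $C_{[w]}$ only involves the corresponding basis elements — true because every bracket $[e_{ij},e_{kl}]$ that could contribute lands on indices already present). The main obstacle is precisely this last point: ensuring that enlarging the poset does not introduce new boundary contributions into the chosen $C_{[w]}$ that would alter its Smith normal form and kill the $\Z_m$. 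I would address it by selecting $w$ so extremal — e.g. weight $-1$ at $a$, weight $+1$ at $b$, and weight $0$ elsewhere, together with a chain of $m-1$ intermediate idempotent/flag contributions — that the only wedges of that weight are the ones already present in the chain subcomplex, making $C_{[w]}$ literally independent of the ambient interval. Once that invariance is in hand, the Smith-normal-form computation of the chain case transfers verbatim and produces the desired $\Z_m$ direct summand in $H_\ast(\frak{gl}_n^\preceq;\Z)$.
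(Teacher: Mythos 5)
Your opening reduction to the interval $[a,b]$ via convexity (Proposition~\ref{subsetopposite}b) is legitimate, but from there the plan diverges from what actually works, and contains a fatal error in the choice of weight vector. You propose to localize at $w$ with $w_a=-1$, $w_b=+1$, and $w=0$ elsewhere; but $\gcd$ of this vector is $1$, and the paper explicitly notes (citing \cite[6.1]{citeLampretVavpeticCLAAMT}) that $\gcd(w_1,\ldots,w_n)=1$ forces $C_{[w]}\simeq 0$. So the subcomplex you single out is contractible and cannot carry any torsion, let alone $\Z_m$. The weight vector that actually supports the torsion cycle is $(-m,0,\ldots,0,m)$, which is what the paper uses.

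The paper's proof is also structurally simpler than what you sketch: there is no reduction to a chain, no separate invariance argument when enlarging the poset, and no appeal to Smith normal form. Instead, one picks any $m-1$ elements $x_1,\ldots,x_{m-1}$ strictly between $a$ and $b$ (they need not be comparable to one another), forms the explicit cycle $v=e_{ab}\bigwedge_{i=1}^{m-1}e_{ax_i}e_{x_ib}$ of weight $(-m,0,\ldots,0,m)$, checks $\partial v=0$ and $\partial(e_{aa}v)=\pm mv$, and then shows $lv\in\Im\partial$ forces $m\mid l$ by writing down all edges of $\Gamma_{C_\star}$ incident to $v$ and constructing a local Morse matching whose Morse differential into $v$ is multiplication by $m$. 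Your proposal gestures at such a matching but never produces it, and the two detours you add (first prove the chain case, then argue the ambient poset cannot add boundary terms) are both unnecessary and, in the second case, unsupported: you flag it yourself as ``the main obstacle'' and then resolve it only by asserting the wrong weight vector makes $C_{[w]}$ independent of the ambient poset. Replacing your weight vector by $(-m,0,\ldots,0,m)$ and actually carrying out the matching would bring the argument in line with the paper; as written, the proposal does not establish the claim.
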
 Therefore the (co)homology of $\frak{gl}_n^\preceq$ contains $p$-torsion for all primes $p$ that are smaller than the size of the largest interval in the poset $\preceq$.
\begin{proof}
Given $a\!\prec\!x_1,\ldots,x_{m\!-\!1}\!\prec\!b$, we define the wedge $v\!=\!e_{ab}\bigwedge_{i=1}^{m\!-\!1}e_{ax_i}e_{x_ib}$, which has weights $w_v\!=\!(-m,0,\ldots,0,m)$. Thus $v\!\in\!\Ker\partial$ and $mv\!\in\!\Im\partial$, so it remains to show that $lv\!\in\!\Im\partial$ implies $l\!\in\!m\Z$. All the edges in digraph $\Gamma_{C_\star}$ that have $v$ as an endpoint are listed below (obtained by splitting $e_{ab}$ or some $e_{ax_i}$ or some $e_{x_ib}$), and we define $\mathcal{M}$ as the set of red edges: \vspace{-2mm}
$$\begin{tikzpicture}[baseline=(m.center)]\matrix (m) [matrix of math nodes, row sep=-3pt, column sep=30pt]{
e_{aa}v=e_{aa}e_{ab}\bigwedge_ie_{ax_i}e_{x_ib}                         &\\
e_{bb}v=e_{bb}e_{ab}\bigwedge_ie_{ax_i}e_{x_ib}                         &e_{ab}e_{ax}e_{xb}\bigwedge_{i\neq1}e_{ax_i}e_{x_ib}=:v_x\\
v'_x:=e_{ax}e_{xb}\bigwedge_ie_{ax_i}e_{x_ib}                           &e_{ab}\bigwedge_ie_{ax_i}e_{x_ib}=v\\
v'_{ry}:=e_{ab}e_{ay}e_{yx_r}e_{x_rb}\bigwedge_{i\neq r}e_{ax_i}e_{x_ib} &e_{ab}e_{ay}e_{yb}\bigwedge_{i\neq r}e_{ax_i}e_{x_ib}=:v_{ry}\\
v'_{sz}:=e_{ab}e_{ax_s}e_{x_sz}e_{zb}\bigwedge_{i\neq s}e_{ax_i}e_{x_ib} &e_{ab}e_{az}e_{zb}\bigwedge_{i\neq s}e_{ax_i}e_{x_ib}=:v_{sz}\\};
\draw[->](m-1-1.east)--(m-3-2.west); \draw[->](m-2-1.east)--(m-3-2.west); \draw[->](m-3-1.east)--(m-3-2.west); \draw[->](m-4-1.east)--(m-3-2.west); \draw[->](m-5-1.east)--(m-3-2.west);
\draw[->,red](m-3-1.east)--(m-2-2.west); \draw[->,red](m-4-1.east)--(m-4-2.west); \draw[->,red](m-5-1.east)--(m-5-2.west);
\end{tikzpicture}\vspace{-2mm}$$
Out of $v'_{ry}$ (resp. $v'_{sz}$) an edge goes only to $v$ and $v_{ry}$ (resp. $v_{sz}$). There is an edge from $v'_x$ to $v_{ry}$ (resp. $v_{sz}$) iff $x\!=\!y$ (resp. $x\!=\!z$). Notice that for any choices $a\!\prec\!x\!\prec\!b$, $a\!\prec\!y\!\prec\!x_r\!\prec\!b$, $a\!\prec\!x_s\!\prec\!z\!\prec\!b$ the vertices $v_{ry},v_{sz},v'_x,v'_{ry},v'_{sz}$ are pairwise distinct, but $v_x\!=\!v_{ry}$ (resp. $v_x\!=\!v_{sz}$) iff $1\!=\!r$ (resp. $1\!=\!s$) and $x\!=\!y$ (resp. $x\!=\!z$). For any $a\!\prec\!y\!\prec\!x_1$ (resp. $x_1\!\prec\!z\!\prec\!b$) we remove  $v'_y\!\to\!v'_{1y}$ (resp. $v'_z\!\to\!v'_{1z}$). Then \vspace{-1mm}
$$\mathcal{M}=\{v'_x\!\to\!v_x, v'_{ry}\!\to\!v_{ry}, v'_{sz}\!\to\!v_{sz};\, r\!\neq\!1\!\neq\!s\}\vspace{-1mm}$$
is a Morse matching, and from $v'_y$ (resp. $v'_z$) to $v$ there are $m$ zig-zag paths, all of the same sign, so \smash{$\mathring{\partial}(v'_x)=mv$} and the result follows. For various choices of $a,b\!\in\![n]$ and $m\!\in\!\N$ such that $|[a,b]|\!>\!m$, the union of resulting $\mathcal{M}$'s is a Morse matching on the whole chain complex, so we get distinct direct summands $\Z_m$ in $H_\ast(\frak{gl}_n^\preceq;\Z)$.
\end{proof}\vspace{1mm}

A poset is \emph{bounded} when it contains a largest and a smallest element.
\begin{Crl} If $\preceq$ is bounded, then $H_\ast(\frak{gl}_n^\preceq;\Z)$ contains $p$-torsion iff $p\!<\!n$.
\end{Crl}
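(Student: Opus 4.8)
The plan is to deduce both directions from Proposition~\ref{IntervalTorsion} and the structural results already established, applied to the bounded poset $\preceq$ on $[n]$.

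First I would prove the ``if'' direction. Let $\hat 0$ and $\hat 1$ denote the least and largest elements of $\preceq$. Since $\preceq$ is a partial order on an $n$-element set with both a least and a largest element, the full interval $[\hat 0,\hat 1]$ equals all of $[n]$, so it contains exactly $n$ elements. Given a prime $p<n$, apply Proposition~\ref{IntervalTorsion} with $a=\hat 0$, $b=\hat 1$ and $m=p$: since $|[\hat 0,\hat 1]|=n>p=m$, we obtain a direct summand $\Z_p$ in $H_\ast(\frak{gl}_n^\preceq;\Z)$, hence $p$-torsion. This handles every prime below $n$ at once.

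For the ``only if'' direction I would argue contrapositively: if $p\geq n$, then $H_\ast(\frak{gl}_n^\preceq;\Z)$ has no $p$-torsion. The cleanest route is via the factorization $\HP_{\!C\!_\star\!(\frak{gl}_n^\preceq)}(t)=(1\!+\!t)^n\cdot\HP_{\!C_{\star,p}(\frak{gl}_n^\prec)}(t)$ together with the block decomposition $C_{\star,p}(\frak{gl}_n^\prec)=\bigoplus_{w\in(p\Z)^n}C_{[w]}$: a nonzero weight vector $w$ of a wedge in $\frak{gl}_n^\prec$ has all entries strictly between $-n$ and $n$, so if $p\geq n$ the only vector in $(p\Z)^n$ in that range is $w=0$, forcing $C_{\star,p}(\frak{gl}_n^\prec)$ to be empty (spanned by the empty wedge alone). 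One then observes that $p$-torsion in $H_\ast(\frak{gl}_n^\preceq;\Z)$ would be detected by a rank drop over $\Z_p$ relative to $\Q$, i.e. by $\HP_{\!C\!_\star\!(\frak{gl}_n^\preceq;\Z_p)}(t)\neq\HP_{\!C\!_\star\!(\frak{gl}_n^\preceq;\Q)}(t)$; but the displayed formula shows that over $\Z_p$ the Hilbert--Poincar\'e series is $(1+t)^n$, which is exactly the characteristic-$0$ answer, so no torsion of exponent $p$ can occur. (Alternatively, one invokes directly that an empty $p$-complex means the Morse-reduced complex computing $H_\ast(\frak{gl}_n^\prec;\Z)$ has no $p$ in its differential, and then transfers along the extension $0\to\frak{gl}_n^\prec\to\frak{gl}_n^\preceq\to\frak{dgn}_n\to 0$ using that $\frak{dgn}_n$ contributes only the free factor $(1+t)^n$.)

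The main obstacle is making the ``only if'' implication fully rigorous: the identity of Hilbert--Poincar\'e series over $\Z_p$ rules out $p$-torsion only once one knows that the relevant boundary matrices, after passing to the Morse complex, have Smith normal form with no entries divisible by $p$ --- equivalently that emptiness of $C_{\star,p}(\frak{gl}_n^\prec)$ really forces $p$-torsion-freeness rather than merely equal ranks. I would close this gap by citing \cite[6.1]{citeLampretVavpeticCLAAMT}, where the $p$-complex is constructed precisely so that $C_{\star,p}(\frak{gl}_n^\prec)\simeq 0$ implies $H_\ast(\frak{gl}_n^\prec;\Z)$, and hence $H_\ast(\frak{gl}_n^\preceq;\Z)\cong H_\ast(\frak{gl}_n^\prec;\Z)\otimes\Lambda^{\!\star}\frak{dgn}_n$, is $p$-torsion-free. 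The ``if'' direction needs no further work beyond the single application of Proposition~\ref{IntervalTorsion}.
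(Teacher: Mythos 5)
Your proof is correct and follows essentially the same route as the paper: the ``if'' direction is the single application of Proposition~\ref{IntervalTorsion} to the interval $[\hat 0,\hat 1]=[n]$, and the ``only if'' direction observes that $p\geq n$ forces every weight vector in $(p\Z)^n$ to vanish, so $C_{\star,p}(\frak{gl}_n^\prec)$ is empty and the Hilbert--Poincar\'e series over $\Z_p$ collapses to $(1+t)^n$. The ``obstacle'' you flag is not really one: the equality $\HP_{C_\star(\frak{gl}_n^\preceq;\Z_p)}=\HP_{C_\star(\frak{gl}_n^\preceq;\Q)}$ already excludes $p$-torsion by the universal coefficient theorem alone (any $p$-torsion in $H_k$ or $H_{k-1}$ would strictly increase $\dim_{\Z_p}H_k(-;\Z_p)$ over $\dim_\Q H_k(-;\Q)$), so no Smith normal form considerations are needed.
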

\begin{proof} If $a$ is the largest element and $b$ the smallest element, then interval $[a,b]$ has $n$ elements and by \ref{IntervalTorsion} $H_\ast(\frak{gl}_n^\preceq;\Z)$ contains $p$-torsion for every $p\!<\!n$. Conversely, for any prime $p\!\geq\!n$ all nonzero weights of indices are smaller than $p$, so the $p$-complex is empty, hence $H_\ast(\frak{gl}_n^\preceq;\Z)$ does not contain $p$-torsion.
\end{proof}
Thus for bounded posets (and we suspect for all posets), $H_\ast(\frak{gl}_n^\preceq;\Z)$ is \emph{torsion-convex}. This is not true for solvable Lie algebras which do not come from posets, e.g. the nilpotent example $L_{6,19}(p)$ from \cite[p.647]{citeGraafC6NLAFC2} contains only $p$-torsion. \vspace{1mm}

\begin{Rmk} If poset $\preceq$ is not bounded, i.e. contains several minimal and/or several maximal elements, then the largest torsion can be much bigger than the largest interval. For example, if the Hasse diagram of the poset is the $p$-complete bipartite graph (it has $2p$ vertices and height $1$), then the largest interval has $2$ elements but $H_\ast(\frak{gl}_n^\preceq;\Z)$ contains $p$-torsion, generated by the wedge of all nondiagonals. Indeed, for $v\!=\!\bigwedge_{i\prec j}e_{ij}$ we have $\partial(v)\!=\!0$ and $\partial(e_{ii}v)=\pm p\,v$ for any $i$. For example, in \vspace{-3mm}
$$\begin{tikzpicture}[baseline=(m.center)]\matrix (m) [matrix of math nodes, row sep=20pt, column sep=15pt]{
3&4\\
1&2\\ };
\draw(m-1-1)--(m-2-1); \draw(m-1-1)--(m-2-2);
\draw(m-1-2)--(m-2-1); \draw(m-1-2)--(m-2-2);\end{tikzpicture}\hspace{10mm}
\begin{tikzpicture}[baseline=(m.center)]\matrix (m) [matrix of math nodes, row sep=20pt, column sep=15pt]{
4&5&6\\
1&2&3\\ };
\draw(m-1-1)--(m-2-1); \draw(m-1-1)--(m-2-2); \draw(m-1-1)--(m-2-3);
\draw(m-1-2)--(m-2-1); \draw(m-1-2)--(m-2-2); \draw(m-1-2)--(m-2-3);
\draw(m-1-3)--(m-2-1); \draw(m-1-3)--(m-2-2); \draw(m-1-3)--(m-2-3);\end{tikzpicture}\hspace{10mm}
\begin{tikzpicture}[baseline=(m.center)]\matrix (m) [matrix of math nodes, row sep=20pt, column sep=15pt]{
6&7&8&9&10\\
1&2&3&4&5\\ };
\draw(m-1-1)--(m-2-1); \draw(m-1-1)--(m-2-2); \draw(m-1-1)--(m-2-3); \draw(m-1-1)--(m-2-4); \draw(m-1-1)--(m-2-5);
\draw(m-1-2)--(m-2-1); \draw(m-1-2)--(m-2-2); \draw(m-1-2)--(m-2-3); \draw(m-1-2)--(m-2-4); \draw(m-1-2)--(m-2-5);
\draw(m-1-3)--(m-2-1); \draw(m-1-3)--(m-2-2); \draw(m-1-3)--(m-2-3); \draw(m-1-3)--(m-2-4); \draw(m-1-3)--(m-2-5);
\draw(m-1-4)--(m-2-1); \draw(m-1-4)--(m-2-2); \draw(m-1-4)--(m-2-3); \draw(m-1-4)--(m-2-4); \draw(m-1-4)--(m-2-5);
\draw(m-1-5)--(m-2-1); \draw(m-1-5)--(m-2-2); \draw(m-1-5)--(m-2-3); \draw(m-1-5)--(m-2-4); \draw(m-1-5)--(m-2-5);\end{tikzpicture}\vspace{-3mm}$$
the first case induces $2$-torsion, second induces $2,3$-torsion, third induces $2,3,5$-torsion. This shows the difference between $H_\ast(\frak{gl}_n^\preceq;\Z)$ and $H_\ast(\frak{gl}_n^\prec;\Z)$: the former can have a lot of torsion even if all intervals are small, but the latter in the case of bipartite Hasse diagrams is torsion-free (since differentials are zero).
\end{Rmk}\vspace{1mm}

\begin{Crl} If $H_\ast(\frak{gl}_n^\preceq;\Z)$ does not contain $2$-torsion, then its Hasse diagram is a forest of height $\leq1$, hence $H_\ast(\frak{gl}_n^\preceq;\Z)$ is torsion-free.
\end{Crl}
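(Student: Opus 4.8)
The plan is to prove the contrapositive of the first implication and then the second implication, the torsion-freeness coming out of the latter argument. For the contrapositive, assume the Hasse diagram $\overline{\Gamma}_\preceq$ is not a forest of height $\le 1$, and split into two cases. If $\preceq$ has a chain $a\prec x\prec b$ (height $\ge 2$), then the interval $[a,b]$ has at least three elements, so \ref{IntervalTorsion} with $m=2$ produces a direct summand $\Z_2$ in $H_\ast(\frak{gl}_n^\preceq;\Z)$. Otherwise $\preceq$ has height $\le 1$ but $\overline{\Gamma}_\preceq$ contains a cycle. In a height-$\le 1$ poset every non-isolated element is either minimal or maximal (else one gets a chain of length $2$), so $\overline{\Gamma}_\preceq$ is bipartite; hence a shortest cycle $C$ is chordless of even length $2k$ with $k\ge 2$. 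Its vertex set is a convex subposet of $\preceq$ (the ambient height $\le 1$ makes the relevant intervals trivial), and chordlessness makes the Hasse diagram of this subposet the cycle of length $2k$; call this ``crown'' poset $\sqsubseteq$. By \ref{subsetopposite}b), $C_\star(\frak{gl}^\sqsubseteq;\Z)$ is a direct summand of $C_\star(\frak{gl}_n^\preceq;\Z)$, so it suffices to produce $2$-torsion in $H_\ast(\frak{gl}^\sqsubseteq;\Z)$.

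To that end I compute the $2$-complex of the strict version $\frak{gl}^\sqsubset$. A wedge of nondiagonals is a subgraph $S$ of the $2k$-cycle, and since each involved index $i$ has weight $w_i=\pm\deg_S(i)$, the wedge lies in $C_{\star,2}(\frak{gl}^\sqsubset)$ exactly when all vertex degrees of $S$ are even; the only such subgraphs of a cycle are the empty graph and the whole cycle. Hence $C_{\star,2}(\frak{gl}^\sqsubset)$ has basis $\{\emptyset,v\}$ with $v=\bigwedge_{i\sqsubset j}e_{ij}$ in degree $2k$ and necessarily zero differential, so $\HP_{C_{\star,2}(\frak{gl}^\sqsubset)}(t)=1+t^{2k}$ over $\Z_2$. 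By \cite[6.1]{citeLampretVavpeticCLAAMT} this gives $\HP_{C_\star(\frak{gl}^\sqsubseteq)}(t)=(1+t)^{2k}(1+t^{2k})$ over $\Z_2$, whereas over $\Q$ it equals $(1+t)^{2k}$; as these power series differ, the universal coefficient theorem forces $H_\ast(\frak{gl}^\sqsubseteq;\Z)$, hence $H_\ast(\frak{gl}_n^\preceq;\Z)$, to contain $2$-torsion. (One can also see this by hand: $\partial v=0$ while $\partial(e_{aa}v)=\pm 2v$ for each $a$, the computation above certifying that $[v]\neq 0$.)

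For the second implication, assume $\overline{\Gamma}_\preceq$ is a forest of height $\le 1$; I claim $C_{\star,p}(\frak{gl}_n^\prec)$ is then empty for every prime $p$. Since the height is $1$, each nondiagonal $e_{ij}$ is a Hasse edge, so a nonempty wedge of nondiagonals is a nonempty subgraph $S$ of the forest $\Gamma_\preceq$; being a nonempty forest, $S$ has a vertex $i$ of degree $1$, which occurs in the wedge only as a left index (if $i$ is minimal) or only as a right index (if maximal), giving weight $w_i=\pm 1\notin p\Z$, so the wedge is not in the $p$-complex. Therefore $\HP_{C_\star(\frak{gl}_n^\preceq)}(t)=(1+t)^n$ over $\Q$ and over every $\Z_p$, so the mod-$p$ Betti numbers of $H_\ast(\frak{gl}_n^\preceq;\Z)$ coincide with its rational Betti numbers for all $p$; by the universal coefficient theorem $H_\ast(\frak{gl}_n^\preceq;\Z)$ is torsion-free.

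I expect the middle case to be the crux: one must observe that a \emph{shortest} cycle yields a \emph{convex} crown subposet rather than merely a $K_{2,2}$ (a cyclic bipartite graph need not contain $C_4$), and then recognize that the crown's $p$-complex collapses to two generators. The remaining ingredients --- \ref{IntervalTorsion}, the direct-summand property of convex subposets from \ref{subsetopposite}, the leaf argument, and the Hilbert--Poincar\'e identity of \cite[6.1]{citeLampretVavpeticCLAAMT} together with the universal coefficient theorem --- are routine to assemble.
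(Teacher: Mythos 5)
Your proof is correct and follows essentially the same route as the paper: use Proposition~\ref{IntervalTorsion} to bound the height, use a cycle in $\overline{\Gamma}_\preceq$ to produce $2$-torsion, and use the leaf-of-a-forest argument to show every $p$-complex is empty. The only difference is that at the cycle step the paper simply asserts that the wedge $\bigwedge_{i\in\Z_k}e_{x_iy_i}e_{x_iy_{i-1}}$ generates $2$-torsion, whereas you spell out why — pass to a shortest (hence chordless) cycle, note that in a height-$1$ poset its vertex set is automatically a convex subposet whose Hasse diagram is exactly that cycle, invoke Proposition~\ref{subsetopposite}(b), and then read off $2$-torsion from the Hilbert--Poincar\'e series of the crown poset (computed in Section~\ref{HGT1}) together with the universal coefficient theorem — which is a welcome bit of extra rigor.
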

\begin{proof} If $H_\ast(\frak{gl}_n^\preceq;\Z)$ contains no $2$-torsion, then by \ref{IntervalTorsion} the poset does not contain intervals with $3$ or more elements, so its height is at most $1$. If $\overline{\Gamma}_\preceq$ contained a cycle\vspace{-3mm}
$$\begin{tikzpicture}[baseline=(m.center)]\matrix (m) [matrix of math nodes, row sep=20pt, column sep=15pt]{
y_1&y_2&y_3&\ldots&y_k\\
x_1&x_2&x_3&\ldots&x_k,\\ };
\draw(m-1-1)--(m-2-1); \draw(m-1-2)--(m-2-2); \draw(m-1-3)--(m-2-3); \draw(m-1-5)--(m-2-5);
\draw(m-1-1)--(m-2-2); \draw(m-1-2)--(m-2-3); \draw(m-1-3)--(m-2-4); \draw(m-1-4)--(m-2-5); \draw(m-1-5)--(m-2-1);
\end{tikzpicture}\vspace{-3mm}$$
then wedge $\bigwedge_{i\in\Z_k}\!e_{x_iy_i}e_{x_iy_{i\!-\!1}}\in C_{\star,2}(\frak{gl}^\preceq)$ would generate $2$-torsion, a contradiction. Therefore $\Gamma_\preceq$ is a forest. Hence every wedge of $e_{xy}$'s (viewed as a subgraph of $\overline{\Gamma}_\preceq$) is a forest, so it has leaves (indices of weight $\pm1$). Thus every $C_{\star,p}(\frak{gl}^\preceq)$ is empty.
\end{proof}\vspace{1mm}

\begin{Cnj} If $H_\ast(\frak{gl}_n^\preceq;\Z)$ contains $p$-torsion, then it contains $p'$-torsion for every prime $p'\!<\!p$.
\end{Cnj}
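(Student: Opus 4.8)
The plan is to translate the conjecture into a statement about the $p$-complexes $C_{\star,q}(\frak{gl}_n^\prec)$ and then reduce it to a purely combinatorial non-vanishing assertion. By \cite[6.1]{citeLampretVavpeticCLAAMT} one has $\HP_{C_\star(\frak{gl}_n^\preceq;\Z_q)}(t)=(1+t)^n\cdot\HP_{C_{\star,q}(\frak{gl}_n^\prec)}(t)$, while over $\Q$ the left-hand series is $(1+t)^n$; feeding both into the universal coefficient theorem shows that the number of cyclic $q$-primary direct summands of $H_\ast(\frak{gl}_n^\preceq;\Z)$ equals $2^{n-1}\big(\HP_{C_{\star,q}(\frak{gl}_n^\prec)}(1)-1\big)$. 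Since the empty wedge is a nontrivial cycle in $C_{\star,q}(\frak{gl}_n^\prec)$ (its degree-$1$ part is $0$), we get $\HP_{C_{\star,q}}(1)\geq1$, hence
\[ H_\ast(\frak{gl}_n^\preceq;\Z)\ \text{has}\ q\text{-torsion}\iff\HP_{C_{\star,q}(\frak{gl}_n^\prec)}(1)\geq2 . \]
In particular $p$-torsion forces $C_{\star,p}(\frak{gl}_n^\prec)$ to be nonempty (to contain a nonempty wedge). Invoking the implication already proved in the Results section — $C_{\star,p}$ nonempty $\Rightarrow$ $C_{\star,p'}$ nonempty for every prime $p'<p$ — the conjecture is reduced to the single implication: \emph{a nonempty $C_{\star,p'}(\frak{gl}_n^\prec)$ is never acyclic}, i.e. $\HP_{C_{\star,p'}}(1)\geq2$.

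\textbf{The case $p'=2$.} Here the implication is already available: it is the contrapositive of the Corollary that if $H_\ast(\frak{gl}_n^\preceq;\Z)$ has no $2$-torsion then it is torsion-free. The mechanism I would reuse for intuition is that a wedge lies in $C_{\star,2}(\frak{gl}_n^\prec)$ exactly when, viewed as an edge set of the comparability graph of $\preceq$, all of its vertex degrees are even; thus $C_{\star,2}(\frak{gl}_n^\prec)$ \emph{is} the $\Z_2$-cycle space of that graph, a $\Z_2$-vector space of cardinality $2^{b_1}$. If it is nonempty then $b_1\geq1$, so $\dim_{\Z_2}C_{\star,2}\geq2$; since $\dim H_\ast\equiv\dim C\pmod 2$ for a finite-dimensional complex over a field, $\HP_{C_{\star,2}}(1)$ is then even and $\geq2$. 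So the content of the conjecture sits entirely at the odd primes $p'$.

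\textbf{The odd case.} For $p'>2$ the set of $p'$-balanced edge sets of the comparability digraph (those with $\mathrm{indeg}\equiv\mathrm{outdeg}\pmod{p'}$ at each vertex) is no longer a $\Z_{p'}$-subspace of anything, and the cycle-space argument collapses. I would try, in decreasing order of optimism: \textbf{(a)} a parity argument — exhibit a fixed-point-free involution on the set of \emph{nonempty} $p'$-balanced edge sets of $\Gamma_\preceq$, which would force $\dim_{\Z_{p'}}C_{\star,p'}$ to be even and hence $\HP_{C_{\star,p'}}(1)\geq2$ exactly as for $p'=2$; \textbf{(b)} an explicit acyclic matching on $C_{\star,p'}(\frak{gl}_n^\prec)$ leaving exactly one nonempty critical cell $v$, together with the computation $\mathring{\partial}(v)=p'\,v'$, generalizing the matching $\mathcal{M}$ and the zig-zag count from the proof of \ref{IntervalTorsion}; or \textbf{(c)} a structural reduction — sharpen the proof of ``$C_{\star,p}$ nonempty $\Rightarrow$ $C_{\star,p'}$ nonempty'' so that it actually produces inside $\preceq$ a \emph{convex} subposet $\sigma$ which is one of the known $\Z_{p'}$-torsion witnesses (an interval of size $>p'$ as in \ref{IntervalTorsion}, a bipartite Hasse diagram with all degrees divisible by $p'$ as in the Remark, or a further configuration proved to carry $p'$-torsion), whence Proposition \ref{subsetopposite}(b) splits off $C_\star(\frak{gl}^\sigma)$ as a direct summand and its $\Z_{p'}$ persists in $H_\ast(\frak{gl}_n^\preceq;\Z)$.

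\textbf{Main obstacle.} The crux is precisely this odd-prime step, and I do not see a uniform way through it. For $p'=2$ the $\Z_2$-cycle space trivializes everything, but for $p'>2$ that module structure is gone and a ``generic'' undirected cycle in the comparability graph need not support a $p'$-torsion class. Moreover the configurations that make $C_{\star,p'}(\frak{gl}_n^\prec)$ nonempty are considerably more varied than intervals and the graphs $K_{p',p'}$ — for example $p'$-regular bipartite Hasse diagrams that are not complete, or the Boolean cube graph $Q_3$ for $p'=3$ — so route \textbf{(c)} demands first classifying these minimal ``$p'$-witnesses'' and then verifying each carries torsion, while routes \textbf{(a)} and \textbf{(b)} amount to ruling out, uniformly over all posets and all odd primes, the existence of a nonempty yet acyclic $p'$-complex. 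Achieving that uniformity is, I expect, exactly why the statement is posed only as a conjecture.
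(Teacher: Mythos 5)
The statement you are addressing is labelled a \emph{Conjecture} in the paper, which supplies no proof of it; you are right not to claim one. Your bookkeeping is sound: the count of cyclic $q$-primary summands as $2^{n-1}\bigl(\HP_{C_{\star,q}(\frak{gl}_n^\prec)}(1)-1\bigr)$ follows correctly from the universal coefficient theorem together with $\HP_{C_\star(\frak{gl}_n^\preceq;\Z_q)}=(1+t)^n\,\HP_{C_{\star,q}(\frak{gl}_n^\prec)}$, and your $\Z_2$-cycle-space argument for the case $p'=2$ is valid (and is a different route from the forest argument in the Corollary following \ref{IntervalTorsion}).

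The genuine gap is in the reduction itself. The single implication you reduce to --- ``a nonempty $C_{\star,p'}(\frak{gl}_n^\prec)$ is never acyclic'' --- is \emph{false}, and the paper says so explicitly: the Remark that immediately follows the Proposition on nonemptiness exhibits a poset on seven elements whose $3$-complex $C_{\star,3}$ has basis $\{\emptyset,u,v\}$ with $\partial u=\pm v$ and $\partial v=0$, hence is nonempty yet contractible, so $\HP_{C_{\star,3}}(1)=1$. That Remark even gives an infinite family (posets whose $C_{\star,p}$ is the chain complex of an $n$-cube with $n<p$) with the same feature. Consequently your routes (a) and (b) aim at a provably false target, and no amount of parity or matching cleverness will rescue them. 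The overshoot happens at the step that passes from ``$\HP_{C_{\star,p}}(1)\geq2$'' to ``$C_{\star,p}$ nonempty'': this discards the homological content, and the nonemptiness Proposition cannot restore it. (A secondary slip: a fixed-point-free involution on the \emph{nonempty} $p'$-balanced wedges would make their count even and hence $\dim_{\Z_{p'}}C_{\star,p'}$ \emph{odd} after the empty wedge is added, which is the wrong parity; indeed in the counterexample $\dim C_{\star,3}=3$. Your $p'=2$ argument succeeds because of the linear structure of the $\Z_2$-cycle space, not because of an involution on nonempty wedges.) Of your three routes, only (c) avoids the counterexample, since it does not pass through bare nonemptiness but instead tries to locate inside $\preceq$ a convex subposet whose $p'$-complex splits off by \ref{subsetopposite}(b) and is already known to carry torsion; the classification of minimal $p'$-witnesses that this requires is precisely what remains open, and presumably why the authors posed the statement only as a conjecture.
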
\vspace{1mm}

\begin{Prp} If $C_{\star,p}(\frak{gl}_n^\prec)$ is nonempty, then so is $C_{\star,p'}(\frak{gl}_n^\prec)$ for every $p'\!<\!p$.
\end{Prp}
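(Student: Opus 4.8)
The plan is to prove the stronger statement that every nonempty wedge $v$ in $\frak{gl}_n^\prec$ with $w_v\!\in\!(p\Z)^n$ has a nonempty \emph{sub}-wedge $v'$ with $w_{v'}\!\in\!(p'\Z)^n$; this clearly suffices. View a wedge as a set of arcs $i\!\to\!j$ (one per factor $e_{ij}$, $i\!\prec\!j$) in the comparability digraph of $\preceq$, which is acyclic since all arcs point upward, and recall $w_{\bullet}(i)=\mathrm{indeg}(i)-\mathrm{outdeg}(i)$. A maximal index of $v$ has outdegree $0$ in $v$, so its weight is its indegree, which is positive and hence $\geq p$; thus $w_v$ is not identically $0$, and for any multidigraph $D$ one has $|D|\geq\sum_{i\,:\,w_D(i)>0}w_D(i)$ (the excess arcs into each such $i$ form disjoint sets), so in particular $|v|\geq p$. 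Decompose the arc set of $v$ into arc-disjoint directed paths $\pi_1,\dots,\pi_m$ (repeatedly pull off a maximal directed path; acyclicity makes each such walk a simple path). As a wedge, $\pi_t$ has weight vector $e_{\tau_t}-e_{\sigma_t}$ (its two endpoints), because interior indices of a path have weight $0$. Collapsing each $\pi_t$ to the single arc $\sigma_t\!\to\!\tau_t$ produces an acyclic multidigraph $U$ on $[n]$ with $m$ arcs and $w_U=w_v$; moreover a sub-multidigraph $W\subseteq U$, with arcs indexed by $T\subseteq\{1,\dots,m\}$, lifts back to the sub-wedge of $v$ formed by all arcs occurring in the paths $\pi_t$, $t\in T$ (arc-disjointness makes this a genuine wedge), whose weight vector equals $w_W$.

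Next, reduce $U$ to a bipartite multigraph: while some vertex $x$ of $U$ has both an in-arc $(u,x)$ and an out-arc $(x,w)$, replace these two arcs by the single arc $(u,w)$ --- legitimate since $u\!\prec\!x\!\prec\!w$. This preserves acyclicity and $w_U$ and strictly decreases the arc count, so it terminates; and the arc count never drops below $\sum_{i\,:\,w_U(i)>0}w_U(i)\geq p$, because $w_U=w_v\equiv0\pmod p$ stays nonzero. Hence the terminal $U$ is bipartite with $\geq p$ arcs: every vertex is a pure source or a pure sink, so $w_U(v)=\pm\mathrm{deg}_U(v)$, and therefore $p\mid\mathrm{deg}_U(v)$ for all $v$. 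Each contraction is undone by the inverse substitution, again preserving weight vectors, so it suffices to find a nonempty $W\subseteq U$ (terminal) with $w_W\equiv0\pmod{p'}$.

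For that I would use de Werra's equitable edge-colouring theorem: every bipartite multigraph admits, for each $k\geq1$, an edge-colouring with $k$ colours in which, at any single vertex, the numbers of edges of the various colours differ pairwise by at most $1$. Apply it with $k=p$; since $p\mid\mathrm{deg}_U(v)$ for every $v$, each colour meets $v$ in exactly $\mathrm{deg}_U(v)/p$ arcs, so all $p$ colour classes have the same size $|U|/p\geq1$. Take $W$ to be the union of any $p'$ of these $p$ classes: then $W$ has $p'|U|/p\geq p'\geq2$ arcs and $\mathrm{deg}_W(v)=p'\mathrm{deg}_U(v)/p\equiv0\pmod{p'}$ for every $v$, hence $w_W(v)=\pm\mathrm{deg}_W(v)\equiv0\pmod{p'}$. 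Lifting $W$ back through the contractions and then through the path-collapsing yields a nonempty sub-wedge $v'\subseteq v$ with $w_{v'}\in(p'\Z)^n$, so $C_{\star,p'}(\frak{gl}_n^\prec)$ is nonempty.

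The crux is in the last two steps. Acyclicity of the comparability digraph is exactly what forces the repeated $2$-path contractions to halt at a \emph{bipartite} multigraph, where the relation $w_U(v)=\pm\mathrm{deg}_U(v)$ holds; and de Werra's theorem, together with $p\mid\mathrm{deg}_U(v)$, is exactly what upgrades ``the whole multigraph has $p$-divisible degrees'' to ``some union of $p'$ colour classes has $p'$-divisible degrees'' --- one colour class, or an arbitrary subgraph, would not do. The routine bookkeeping to verify is that weight vectors are genuinely preserved by the path-collapsing, by a $2$-path contraction, and by the inverse operations, and that the arc count stays $\geq p$ throughout, so that the equitable colouring has content and the chosen union of $p'$ classes is nonempty.
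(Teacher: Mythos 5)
Your proof is correct and follows essentially the same route as the paper: contract arc-disjoint directed paths in the wedge to obtain a bipartite multidigraph with the same weight vector (hence all degrees in $p\Z$), then apply a bipartite edge-colouring result to extract a sub-multigraph with all degrees in $p'\Z$, and lift back to a sub-wedge of $v$. The only cosmetic difference is that you invoke de~Werra's equitable $p$-edge-colouring directly, whereas the paper performs the vertex-splitting trick to reach a $p$-regular bipartite multigraph and then applies K\"onig--Egerv\'ary --- two packagings of the same argument.
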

\begin{proof} By assumption, there exists a wedge $\emptyset\!\neq\!v\!\in\!C_{\star}(\frak{gl}_n^\preceq)$ with weights $w_v\!\in\!(p\Z)^n$. Let $\Gamma_v$ be a digraph, whose edges $i\!\to\!j$ are the elements $e_{ij}\!\in\!v$. The indices appearing in $v$ are partitioned into three sets: those with positive/negative/zero weights. Let $\Gamma_{(v)}$ be the bipartite multigraph with vertices the indices in $v$ with positive/negative weights, denoted $a_i$ and $b_j$. Given indices $a$ and $b$ with $w_a\!<\!0\!<\!w_b$, and given a directed path $\gamma$ from $a$ to $b$ in $\Gamma_v$, we remove ell edges of $\gamma$ from $\Gamma_v$ and add an edge $a\!\to\!b$ to $\Gamma_{(v)}$. We keep doing this until $\Gamma_v$ has no edges. For example, if the Hasse diagram is below left and $v\!\in\!C_{\star,3}$ consists of edges pictured below centre, then the corresponding $\Gamma_v$ is below right.\vspace{-2mm}
$$\begin{tikzpicture}[baseline=(m.center)]\matrix (m) [matrix of math nodes, row sep=8pt, column sep=8pt]{
6 &7 &8 &9 &10\\
&&3&&\\
1 &2 & &4 &5\\ };
\draw(m-3-1)--(m-1-1); \draw(m-3-1)--(m-1-2); \draw(m-3-1)--(m-1-3); \draw(m-3-5)--(m-1-3); \draw(m-3-5)--(m-1-4); \draw(m-3-5)--(m-1-5);
\draw(m-3-2)--(m-2-3); \draw(m-3-4)--(m-2-3); \draw(m-2-3)--(m-1-1); \draw(m-2-3)--(m-1-2); \draw(m-2-3)--(m-1-3); \draw(m-2-3)--(m-1-4); \draw(m-2-3)--(m-1-5);
\end{tikzpicture}\hspace{8mm}
\begin{tikzpicture}[baseline=(m.center)]\matrix (m) [matrix of math nodes, row sep=5pt, column sep=7pt]{
6 &7 &8 &9 &10\\
&&3&&\\
1 &2 & &4 &5\\ };
\draw(m-3-1)--(m-1-1); \draw(m-3-1)--(m-1-2); \draw(m-3-1)--(m-1-3); \draw(m-3-2)--(m-1-1); \draw(m-3-2)--(m-1-2); \draw(m-3-2)--(m-2-3);
\draw(m-3-5)--(m-1-3); \draw(m-3-5)--(m-1-4); \draw(m-3-5)--(m-1-5); \draw(m-3-4)--(m-2-3); \draw(m-3-4)--(m-1-4); \draw(m-3-4)--(m-1-5);
\draw(m-3-2)--(m-2-3); \draw(m-3-4)--(m-2-3); \draw(m-2-3)--(m-1-1); \draw(m-2-3)--(m-1-2); \draw(m-2-3)--(m-1-3); \draw(m-2-3)--(m-1-4); \draw(m-2-3)--(m-1-5);
\end{tikzpicture}\hspace{8mm}
\begin{tikzpicture}[baseline=(m.center)]\matrix (m) [matrix of math nodes, row sep=5pt, column sep=7pt]{
b_1 &b_2 &b_3 &b_4 &b_5\\
&&a_3&&\\
a_1 &a_2 & &a_4 &a_5\\ };
\draw(m-3-1)--(m-1-1); \draw(m-3-1)--(m-1-2); \draw(m-3-1)--(m-1-3); \draw(m-3-2)--(m-1-1); \draw(m-3-2)--(m-1-2); \draw[bend left](m-3-2)--(m-1-1);
\draw(m-3-5)--(m-1-3); \draw(m-3-5)--(m-1-4); \draw(m-3-5)--(m-1-5); \draw[bend left](m-3-4)--(m-1-5); \draw(m-3-4)--(m-1-4); \draw(m-3-4)--(m-1-5);
\draw(m-3-2)--(m-2-3); \draw(m-3-4)--(m-2-3); \draw(m-2-3)--(m-1-1); \draw(m-2-3)--(m-1-2); \draw(m-2-3)--(m-1-3); \draw(m-2-3)--(m-1-4); \draw(m-2-3)--(m-1-5);
\end{tikzpicture}\hspace{8mm}
\vspace{-2mm}$$ Our $\Gamma_{(v)}$ is a bipartite multigraph whose vertices have degrees in $p\N$. We wish to obtain a graph whose vertices all have degree $p$. We create a new multigraph $\Gamma'_{(v)}$: instead of every vertex $a$ of degree $pk$, we draw $k$ copies of it, each of which has $p$ edges to those $b$'s to which $a$ was connected. We create a new multigraph $\Gamma''_{(v)}$: instead of every vertex $b$ of degree $pl$, we draw $l$ copies of it, each of which has $p$ edges to those $a$'s to which $b$ was connected. Now $\Gamma_v''$ is a $p$-regular bipartite multigraph. By the K\"{o}nig-Egerv\'{a}ry theorem, the edges of $\Gamma''_{(v)}$ can be colored with only $p$ colors. Let $\Gamma''$ be the subgraph consisting of those edges colored by the first $p'$ colors, so it is $p'$-regular. Now we reverse the process: we join $b$'s that came from a $b$ of degree $pl$ into a single vertex of degree $p'l$, and then join $a$'s that came from a $a$ of degree $pk$ into a single vertex of degree $p'k$. The result $\Gamma$ is a subgraph of $\Gamma_v$, so it corresponds to a subwedge of $v$ which lies in $C_{\star,p'}$.
\end{proof}

\begin{Rmk} Chain complex $C_{\star,p}$ can be nonempty but still contractible. E.g. for \vspace{-2mm}
$$\begin{tikzpicture}[baseline=(m.center)]\matrix (m) [matrix of math nodes, row sep=5pt, column sep=13pt]{
b_1 &b_2 & b_3\\
x&&\\
a_1 &a_2 & a_3\\ };
\draw(m-3-1)--(m-2-1); \draw(m-2-1)--(m-1-1); \draw(m-3-1)--(m-1-2); \draw(m-3-1)--(m-1-3);
\draw(m-3-2)--(m-1-1); \draw(m-3-2)--(m-1-2); \draw(m-3-2)--(m-1-3);
\draw(m-3-3)--(m-1-1); \draw(m-3-3)--(m-1-2); \draw(m-3-3)--(m-1-3); \end{tikzpicture}\vspace{-2mm}$$
our $C_{\star,3}$ has a basis consisting of $u\!=\!e_{a_1x}e_{xb_1}\bigwedge_{(i,j)\neq(1,1)}e_{a_ib_j}$ and $v\!=\!\bigwedge_{(i,j)}e_{a_ib_j}$ and $\emptyset$, with $\partial(u)=\pm v$ and $\partial(v)\!=\!0$, so $C_{\star,3}$ is contractible. In fact, if the poset consists of elements $a_1,\ldots,a_p,b_1,\ldots,b_p,x_1,\ldots,x_n$ with $n\!<\!p$ and relations $a_i\!\prec\!b_j$ and $a_i\!\prec\!x_i\!\prec\!b_i$ for all $i$ and $j$, then $C_{\star,p}$ is isomorphic to the simplicial chain complex of the $n$-cube, and is therefore homotopy equivalent to $R\!\leftarrow\!0\!\leftarrow\!0\!\leftarrow\!0\!\leftarrow\!\ldots$.
\end{Rmk}

\begin{Prp}\label{niltorsion} $H_\ast(\frak{nil}_n;\Z)$ contains a direct summand $\Z_{n\!-\!2}$.
\end{Prp}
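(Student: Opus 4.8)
The plan is to adapt the acyclic-matching argument of Proposition~\ref{IntervalTorsion} to the nilpotent case. Fix the weight vector $w=(-(n-2),0,\ldots,0,n-2)\in\Z^n$ and work inside the subcomplex $C_{[w]}$, which is a direct summand of $C_\star(\frak{nil}_n)$ by the weight decomposition $C_\star(\frak{gl}_n^\prec)=\bigoplus_w C_{[w]}$; since $w_1\neq0\neq w_n$, every wedge of weight $w$ contains the indices $1$ and $n$, so $C_{[w]}\subseteq C_{1,n}$, the summand from Example~\ref{induction}. For $m\in\{2,\ldots,n-1\}$ put $S_m=\{2,\ldots,n-1\}\setminus\{m\}$ and
$$c_m\;=\;e_{1n}\bigwedge_{i\in S_m}e_{1i}e_{in}\;\in\;C_{[w]},$$
a wedge of $2n-5$ basis elements. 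A direct check gives $w_{c_m}=w$ and $\partial c_m=0$: among the factors of $c_m$ the only nonzero brackets are $[e_{1i},e_{in}]=e_{1n}$, and since $e_{1n}$ is itself a factor, each resulting summand has a repeated wedge factor and vanishes. Moreover $\partial\big(\bigwedge_{i=2}^{n-1}e_{1i}e_{in}\big)=\sum_{m}\pm c_m$, so the classes $[c_m]\in H_\ast(\frak{nil}_n;\Z)$ satisfy one linear relation; it will be enough to show that one of them generates a $\Z_{n-2}$ direct summand.

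First I would show that $(n-2)\,c_m$ is a boundary. For $\frak{sol}_n$ the analogous fact in \ref{IntervalTorsion} followed from $\partial(e_{aa}v)=\pm(n-1)v$; here the diagonal matrix $e_{11}$ is unavailable in $\frak{nil}_n$, so one must instead produce an explicit chain $u_m$, assembled as a signed sum of wedges, each obtained from $c_m$ by splitting one of its factors $e_{1n},e_{1i},e_{in}$ through the missing index $m$ (for $n=4$, for instance, one may take $u=e_{12}e_{13}e_{24}e_{34}-e_{12}e_{14}e_{23}e_{34}$, with $\partial u=\pm2\,c$ for the associated cycle $c$). One then verifies that the boundaries of the summands of $u_m$ telescope: exactly $n-2$ zig-zag paths arrive at $c_m$, all carrying the same sign, while all remaining terms cancel in pairs, so that $\partial u_m=\pm(n-2)\,c_m$.

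Finally, to see that the order of $[c_m]$ is exactly $n-2$ and that $[c_m]$ spans a direct summand, I would build an acyclic matching $\mathcal{M}$ on $C_{[w]}$ in the style of \ref{IntervalTorsion} (and, as there, extend it to all of $C_\star(\frak{nil}_n)$ by taking a union over $m$ and over the choice of extreme indices): list all edges of $\Gamma_{C_\star}$ incident to $c_m$ — these arise by splitting some factor $e_{1i}$ or $e_{in}$ at a non-extreme index — match the removable ones, and leave a single critical cell one degree above $c_m$ whose Morse boundary equals $\pm(n-2)\,c_m$; then $c_m$ generates a $\Z_{n-2}$ summand of the Morse complex, hence of $H_\ast(C_{[w]})$, hence of $H_\ast(\frak{nil}_n;\Z)$. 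The main obstacle is to engineer $u_m$ (equivalently $\mathcal{M}$) so that the coefficient comes out to exactly $n-2$ and not $n-1$: the drop by one from the $\frak{sol}_n$-value reflects precisely that the zig-zag path which in $\frak{sol}_n$ originated from the diagonal element $e_{11}$ is now absent, and the delicate point is the sign bookkeeping that shows the surviving $n-2$ paths are sign-coherent while every side term cancels.
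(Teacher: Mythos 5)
Your plan replays the template of Proposition~\ref{IntervalTorsion} inside the weight summand $C_{[w']}$ for $w'=(-(n-2),0,\ldots,0,n-2)$, and the cycle $c_m$ you exhibit is a genuine cycle of that weight. The direct-sum decomposition $C_\star(\frak{nil}_n)=\bigoplus_w C_{[w]}$ does guarantee that any torsion in $H_\ast(C_{[w']})$ is a direct summand of $H_\ast(\frak{nil}_n;\Z)$, and your $n=4$ check ($\partial(e_{12}e_{13}e_{24}e_{34}-e_{12}e_{14}e_{23}e_{34})=\mp2\,e_{13}e_{14}e_{34}$) is correct. So the strategy can in principle succeed. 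But as written there is a real gap at exactly the place you flag: you never construct the Morse matching $\mathcal{M}$ on $C_{[w']}$, never prove that $c_m$ survives as a critical cell, and never verify the claim that ``exactly $n-2$ zig-zag paths arrive at $c_m$, all sign-coherent.'' For $n\geq5$ the summand $C_{[w']}$ is considerably larger than your list suggests --- for instance $e_{15}e_{12}e_{13}e_{35}e_{45}e_{24}\in C_{[(-3,0,0,0,3)]}$ uses interior edges $e_{24}$ that your $c_m$ and $u_m$ do not involve --- so the matching has to account for these extra cells before the coefficient $n-2$ can be read off, and that is precisely the hard combinatorics you deferred. (A small inaccuracy worth fixing: with your $m=n-2$ the solvable analogue already gives $\partial(e_{11}c_m)=\pm(n-2)c_m$, not $\pm(n-1)c_m$, so the ``drop by one because the diagonal is gone'' picture is not quite the right heuristic.)

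The paper sidesteps all of this by choosing a \emph{different} weight vector, $w=(-1,3-n,1,\ldots,1)$, which pins down $C_{[w]}$ so tightly that its basis can be described in one sentence: the wedges either contain all of $e_{23},\ldots,e_{2n}$ (this forces $v=\alpha:=e_{12}\bigwedge_{3\le i\le n}e_{2i}$) or are missing exactly one $e_{2i}$, and there is exactly one $e_{1x}$ and one $e_{yn}$. The matching $\{e_{2i}e_{in}\cdots\to e_{2n}\cdots\}$ is then visibly acyclic, leaves only the two critical cells $\alpha$ and $\beta:=e_{1n}\bigwedge_{3\le i<n}e_{2i}$ (in degrees $n-1$ and $n-2$, not $2n-5$ and $2n-4$ as in your version), and the Morse boundary is one direct edge plus $n-3$ equal-sign zig-zags, giving $\mathring{\partial}(\alpha)=\pm(n-2)\beta$ with no further bookkeeping. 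That choice of $w$ is the idea your proposal is missing: it is what makes the matching and the sign computation short enough to actually finish. Your proposal as it stands is an outline with the decisive combinatorial lemma unproved.
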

Determining what torsion appears in the general $H_\ast(\frak{gl}_n^\prec;\Z)$ is more difficult.
Thus tables $\big(H_\ast(\frak{nil}_n;\Z)\big)_{\!n\in\N}$ and $\big(H_\ast(\frak{sol}_n;\Z)\big)_{\!n\in\N}$ contain all prime powers $\Z_{p^r}$.
\begin{proof} Let $C_{[w]}$ be the direct summand subcomplex of $C_\star(\frak{nil}_n;\Z)$, spanned by all wedges with weight vector $w\!=\!(-1,3\!-\!n,1,\ldots,1)\!\in\!\Z^n$. For any  $v\!\in\!C_{[w]}$ there holds:\, $v$ either contains all $e_{23},e_{24},\ldots,e_{2n}$ (iff $v\!=\!e_{12}\bigwedge_{3\leq i\leq n}\!e_{2i}\!=:\!\alpha$) or exactly one of them is missing;\, $v$ contains exactly one $e_{1x}$ and one $e_{yn}$ (which may be equal);\, $e_{1n}\!\in\!v$ iff $v\!=\!e_{1n}\bigwedge_{3\leq i<n}\!e_{2i}\!=:\!\beta$\!. If $e_{2n}\!\in\!v\!\neq\!\alpha$\!, then $e_{2i}\!\notin\!v$ for a unique $i\!<\!n$ and $e_{in}\!\notin\!v$. If $e_{2n}\!\notin\!v\!\neq\!\beta$, then $\bigwedge_{3\leq i<n}\!e_{2i}\!\subseteq\!v$ and $e_{in}\!\in\!v$ for a unique $i\!>\!2$. Thus \vspace{-1mm}
$$\mathcal{M}\!=\!\big\{e_{2i}e_{in}\ldots \!\longrightarrow\! e_{2n}\ldots \big\}\vspace{-2mm}$$ is a matching. There does not exist a pair of distinct edges
\raisebox{7pt}{$\xymatrix@R=-3pt@C=8mm{&\scriptscriptstyle e_{2n}\ldots\\
\scriptscriptstyle e_{2i}e_{in}\ldots\ar[ru]\ar[r]   &\scriptscriptstyle e_{2n}\ldots\\}$}
(because $e_{in}$ is unique in a wedge, so we must bracket it with $e_{2i}$ to obtain $e_{2n}$), hence $\mathcal{M}$ is a Morse matching and every zig-zag path contains at most one zig-zag. The critical vertices are $\mathring{\mathcal{M}}\!=\! \{\alpha,\beta\}$. From $\alpha$ to $\beta$ there is one direct edge $\alpha\!\overset{\scriptscriptstyle(-1)^n}{\longrightarrow}\!\beta$ and $n\!-\!3$ zig-zag paths $\alpha\!\overset{\scriptscriptstyle(-1)^r}{\longrightarrow}\! e_{1r}\bigwedge_{i\neq r}\!e_{2i}\!\overset{\scriptscriptstyle(-1)^{n+r}}{\longleftarrow}\! e_{1r}e_{rn}\bigwedge_{i\neq n}\!e_{2i} \!\overset{\scriptscriptstyle-1}{\longrightarrow}\!\beta$ which have sign $(-1)^{r+n+r+1-1}\!=\!(-1)^n$\!, hence $\mathring{\partial}(\alpha)=\pm(n\!-\!2)\beta$ and we are finished.
\end{proof}
Notice that \ref{subsetopposite} and \ref{niltorsion} confirm the conjecture \cite[1.16.(1), p.141]{citeJollenbeckADMTACA}.


\vspace{4mm}
\section{Posets of height 1}\label{HGT1} In this section, all posets are assumed to have height $1$, so Hasse diagrams correspond to bipartite graphs. A wedge in the $p$-complex $C_{\star,p}(\frak{gl}_n^\prec)$ in this case corresponds to a subset of edges of the Hasse diagram $\Gamma_{\!\preceq}$, so that those edges constitute a subgraph of $\overline{\Gamma}_{\!\preceq}$ in which all vertices have degrees in $p\Z$.
\par This notion is similar to $p$-regular subgraphs (which are those in which every vertex has degree $p$), so we shall call such full subgraphs \emph{$p^+$\!-regular}.
\par Height $1$ also means that $\frak{gl}_n^\prec$ has all brackets zero, so we just need to count all $p^+$\!-regular subgraphs with $k$ edges to obtain $H_k(\frak{gl}_n^\preceq;\Z_p)$. This can still be quite complex, since subgraphs of different sizes contribute to different degrees. \vspace{1mm}

\begin{Crl} $\dim H_k(\frak{gl}_n^\preceq;\Z_2)= \sum_{i}\!\binom{n}{k-i}|\{\text{Eulerian subgraphs in }\overline{\Gamma}_{\!\preceq}\text{ of size }i\}|$.
\end{Crl}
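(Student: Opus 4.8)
The plan is to combine the two structural facts already established in the excerpt for height-$1$ posets: first, that $\frak{gl}_n^\prec$ has all brackets zero, so that the Chevalley differential of $C_\star(\frak{gl}_n^\prec)$ vanishes and hence $H_k(\frak{gl}_n^\prec;\Z_2)$ is freely spanned by the $k$-edge subsets of $\overline{\Gamma}_{\!\preceq}$ in which every vertex has even degree; and second, the formula $\HP_{\!C\!_\star\!(\frak{gl}_n^\preceq)}(t)=(1+t)^n\cdot\HP_{\!C_{\star,p}(\frak{gl}_n^\prec)}(t)$ from \cite[6.1]{citeLampretVavpeticCLAAMT}, specialized to $p=2$. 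The point is that over $\Z_2$ a vertex of $\overline{\Gamma}_{\!\preceq}$ has degree in $2\Z$ precisely when the corresponding subgraph is $2^+$-regular, and a finite graph is $2^+$-regular exactly when it is Eulerian (a disjoint union of edge-disjoint cycles, equivalently every connected component has all degrees even).

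First I would record that, because $H_\ast(\frak{gl}_n^\prec;\Z_2)$ has vanishing differential, $\dim H_i(\frak{gl}_n^\prec;\Z_2)$ equals the number of Eulerian subgraphs of $\overline{\Gamma}_{\!\preceq}$ with exactly $i$ edges; call this number $e_i$. Thus
$$\HP_{\!C_{\star,2}(\frak{gl}_n^\prec)}(t)=\sum_i e_i\,t^i.$$
Next, apply the field version of the universal coefficient theorem to replace $K$ by $\Z_2$ throughout, and invoke the displayed identity above with $p=2$:
$$\HP_{\!C\!_\star\!(\frak{gl}_n^\preceq)}(t)=(1+t)^n\cdot\sum_i e_i\,t^i=\Bigl(\sum_j\binom{n}{j}t^j\Bigr)\Bigl(\sum_i e_i\,t^i\Bigr).$$
Reading off the coefficient of $t^k$ on both sides gives
$$\dim_{\Z_2} H_k(\frak{gl}_n^\preceq;\Z_2)=\sum_i\binom{n}{k-i}e_i,$$
which is exactly the claimed formula once $e_i$ is unwound as $|\{\text{Eulerian subgraphs in }\overline{\Gamma}_{\!\preceq}\text{ of size }i\}|$. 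Cohomology then agrees with homology over the field $\Z_2$, so the same count holds for $H^k$.

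The one genuine point to be careful about — and the only place where something could go wrong — is the identification of the $\Z_2$-weight condition with the Eulerian condition: a wedge of distinct $e_{ij}$'s lies in $C_{\star,2}(\frak{gl}_n^\prec)$ iff every index $i$ has weight $w_i\in 2\Z$, and since in the height-$1$ case each $e_{ij}$ contributes $\pm1$ to exactly one $w_i$ and to $w_j$ (with the sign depending only on which of $i,j$ is the minimal/maximal element), $|w_i|$ is the degree of vertex $i$ in the corresponding subgraph modulo the sign, so $w_i\equiv(\pm)\deg(i)\ (\mathrm{mod}\ 2)$ and the weight is even iff the degree is even. It then remains only to quote the classical graph-theoretic fact that a finite (multi)graph has all vertex degrees even iff it decomposes into edge-disjoint cycles, i.e. is Eulerian in the sense used here; this is where "size $i$" must be read as "number of edges", matching the grading of $C_\star$ by wedge length. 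I expect no serious obstacle beyond bookkeeping these sign/parity conventions and making sure the convolution of the two generating functions is transcribed correctly; the heavy lifting was done in \cite[6.1]{citeLampretVavpeticCLAAMT} and in the vanishing-differential observation for height-$1$ posets.
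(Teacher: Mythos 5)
Your approach is exactly the one the paper intends: use the height-$1$ observation that $\frak{gl}_n^\prec$ is abelian (so differentials vanish on the $2$-complex) together with the factorization $\HP_{C_\star(\frak{gl}_n^\preceq)}(t)=(1+t)^n\cdot\HP_{C_{\star,2}(\frak{gl}_n^\prec)}(t)$, identify $2^+$-regular subgraphs with Eulerian subgraphs, and read off the coefficient of $t^k$ from the convolution. The calculation and the conclusion are correct.

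There is, however, a recurring misstatement in the setup that you should fix. You write that ``$H_k(\frak{gl}_n^\prec;\Z_2)$ is freely spanned by the $k$-edge subsets of $\overline{\Gamma}_{\!\preceq}$ in which every vertex has even degree,'' and again that ``$\dim H_i(\frak{gl}_n^\prec;\Z_2)$ equals the number of Eulerian subgraphs of size $i$.'' Neither is true: since all brackets vanish, $H_k(\frak{gl}_n^\prec;\Z_2)=\Lambda^k\frak{gl}_n^\prec\otimes\Z_2$ is spanned by \emph{all} $k$-edge subsets, so its dimension is $\binom{|\prec|}{k}$, not the Eulerian count $e_k$. What equals $e_i$ is $\dim_{\Z_2}(C_{\star,2}(\frak{gl}_n^\prec))_i$, equivalently $\dim H_i(C_{\star,2}(\frak{gl}_n^\prec))$, because the $2$-complex is by definition spanned by the wedges whose weight vector lies in $(2\Z)^n$, i.e. exactly the $2^+$-regular (Eulerian) subgraphs, and it inherits the zero differential. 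Your next line, $\HP_{C_{\star,2}(\frak{gl}_n^\prec)}(t)=\sum_i e_i t^i$, is the correct assertion and is what the factorization formula actually calls for; the earlier sentences should be rephrased to refer to the $2$-complex rather than to $H_\ast(\frak{gl}_n^\prec;\Z_2)$. This slip does not derail the computation here (you used the right quantity in the generating function), but as stated it would give the wrong answer if applied elsewhere, so it is worth being precise about which object Eulerian subgraphs enumerate.

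Your closing discussion of the weight-versus-degree parity is exactly right: for a height-$1$ poset each minimal vertex $a$ has $w_a=-\deg(a)$ in the chosen subgraph and each maximal vertex $b$ has $w_b=\deg(b)$, so $w_v\in(2\Z)^n$ is precisely the even-degree condition, and (by Veblen's theorem) a finite graph with all degrees even decomposes into edge-disjoint cycles, matching the paper's usage of ``Eulerian.''
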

This is because $2^+$\!-regular graphs are those in which every vertex has even degree, i.e. Eulerian graphs. We look at a few particular cases.\vspace{2mm}

\subsection{Path and cycle posets} Let our poset $\preceq$ be given by the Hasse diagram below which is an $n$-cycle. Thus $\dim \frak{gl}^\preceq_{2n}=4n$.\vspace{-2mm}
$$\begin{tikzpicture}[baseline=(m.center)]\matrix (m) [matrix of math nodes, row sep=20pt, column sep=13pt]{
b_1 &b_2 & \ldots & b_n\\
a_1 &a_2 & \ldots & a_n\\ };
\draw(m-2-1)--(m-1-1); \draw(m-2-1)--(m-1-4);
\draw(m-2-2)--(m-1-2); \draw(m-2-2)--(m-1-1);
\draw(m-2-3)--(m-1-3); \draw(m-2-3)--(m-1-2);
\draw(m-2-4)--(m-1-4); \draw(m-2-4)--(m-1-3); \end{tikzpicture}\hspace{2mm}\frak{gl}^\preceq=\bigg\{ \raisebox{-6mm}{\includegraphics[width=0.11\textwidth]{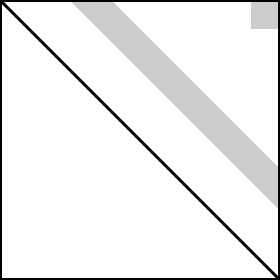}}\bigg\}\vspace{-2mm}$$

\begin{Prp} \!$\HP_{\!C\!_\star\!(\frak{gl}^\preceq;\Z_2)} \!\!=\! (1\!+\!t)^{2n}(1\!+\!t^{2n})$ and $\HP_{\!C\!_\star\!(\frak{gl}^\preceq;\Z_p)} \!\!=\! (1\!+\!t)^{2n}$ \!if $p\!>\!2$.
\end{Prp}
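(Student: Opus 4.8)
The plan is to reduce the statement to a routine graph‑enumeration problem using the two structural facts already at hand. First, recall the identity
$$\HP_{\!C\!_\star\!(\frak{gl}^\preceq;\Z_p)}(t)=(1\!+\!t)^{2n}\cdot\HP_{\!C_{\star,p}(\frak{gl}^\prec)}(t)$$
from \cite[6.1]{citeLampretVavpeticCLAAMT} (the poset here has $2n$ elements $a_1,b_1,\ldots,a_n,b_n$, so the exponent is $2n$, matching $\dim\frak{gl}^\preceq_{2n}=4n$). Since the cycle poset has height $1$, $\frak{gl}^\prec$ is abelian, the differential on $C_{\star,p}(\frak{gl}^\prec)$ vanishes, and hence $\HP_{\!C_{\star,p}(\frak{gl}^\prec)}(t)=\sum_k N_k\,t^k$, where $N_k$ is the number of $p^+$‑regular subgraphs of $\overline{\Gamma}_{\!\preceq}$ with $k$ edges. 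From the Hasse diagram, $\overline{\Gamma}_{\!\preceq}$ is precisely the $2n$‑cycle $C_{2n}$ on vertices $a_1,b_1,a_2,b_2,\ldots,a_n,b_n$ (edges $a_ib_i$ and $a_{i+1}b_i$, indices mod $n$). So everything comes down to counting $p^+$‑regular subgraphs of $C_{2n}$ by number of edges.

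Next I would carry out that count. Every vertex of $C_{2n}$ has degree $2$, so in any subgraph each vertex has degree $0$, $1$, or $2$, and being $p^+$‑regular forces each degree to lie in $p\Z\cap\{0,1,2\}$. For $p\!\geq\!3$ this set is $\{0\}$, so the only such subgraph is the empty one; thus $\HP_{\!C_{\star,p}(\frak{gl}^\prec)}=1$ and $\HP_{\!C\!_\star\!(\frak{gl}^\preceq;\Z_p)}=(1\!+\!t)^{2n}$. For $p=2$ the set is $\{0,2\}$: a subgraph qualifies iff every vertex either keeps both of its incident edges or neither. Choosing any single edge forces both its endpoints to degree $2$, which forces their other incident edges into the subgraph, and since $C_{2n}$ is connected this cascades around the whole cycle, forcing all $2n$ edges in. Hence the only $2^+$‑regular subgraphs of $C_{2n}$ are the empty one ($0$ edges) and the full cycle ($2n$ edges), giving $\HP_{\!C_{\star,2}(\frak{gl}^\prec)}=1+t^{2n}$ and therefore $\HP_{\!C\!_\star\!(\frak{gl}^\preceq;\Z_2)}=(1\!+\!t)^{2n}(1+t^{2n})$.

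There is no genuinely hard step here; the only point that needs care is the cascade argument in the case $p=2$, where one must use that $C_{2n}$ is connected and $2$‑regular so that the degree constraint propagates around the entire cycle with no way to stop. (Equivalently, the even subgraphs of a graph form its cycle space over $\Z_2$, and for $C_{2n}$ this space is $1$‑dimensional, spanned by the full cycle of total edge‑weight $t^{2n}$ — which is just a conceptual restatement of the same observation.)
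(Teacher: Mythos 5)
Your proposal is correct and takes essentially the same approach as the paper: it reduces via the identity $\HP_{\!C\!_\star\!(\frak{gl}^\preceq;\Z_p)}=(1+t)^{2n}\HP_{\!C_{\star,p}(\frak{gl}^\prec)}$ to counting $p^+$\!-regular subgraphs of $\overline{\Gamma}_{\!\preceq}\cong C_{2n}$, observes that $\frak{gl}^\prec$ is abelian so the differential vanishes, and concludes that only the empty subgraph (always) and the full $2n$-cycle (when $p=2$) qualify. The paper's own proof is a one-sentence version of this observation; your write-up, including the degree-cascade argument forcing the full cycle in the $p=2$ case, simply fills in the details that the paper leaves implicit.
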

\begin{proof} In the given Hasse diagram, there are only two $p^+$\!-regular subgraphs: the empty subgraph (wedge $\emptyset$) and the whole subgraph (wedge $\bigwedge_{i\in\Z_n}\!e_{a_ib_i}e_{a_ib_{i\!-\!1}}$).
\end{proof}
\vspace{2mm}

The case when $\Gamma_{\!\preceq}$ is a path, or more generally a tree (of height $1$), is trivial:
\begin{Prp} If $\Gamma_{\!\preceq}$ is a tree, then $H_k(\frak{gl}^\preceq_n;\Z)\cong \Z^{\binom{n}{k}}$.
\end{Prp}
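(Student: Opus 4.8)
The plan is to reduce everything to the remark, already contained in the proof of the corollary above (the one deducing that a $2$-torsion-free $H_\ast(\frak{gl}_n^\preceq;\Z)$ forces $\Gamma_{\!\preceq}$ to be a forest), that a forest carries no nonempty $p^+$\!-regular subgraph. Since a tree is acyclic it is a forest, and --- being in the height-$1$ regime --- any wedge of basis elements $e_{xy}$, regarded as a subgraph of $\overline{\Gamma}_{\!\preceq}$, is itself a forest; hence it has a leaf, i.e. an index of weight $\pm1$. Thus no nonempty wedge has all weights in $p\Z$, so the $p$-complex $C_{\star,p}(\frak{gl}_n^\prec)$ is empty for every prime $p$.

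From here I would invoke \cite[6.1]{citeLampretVavpeticCLAAMT}: since $\HP_{C_{\star,p}(\frak{gl}_n^\prec)}(t)=1$, we get $\HP_{C_\star(\frak{gl}_n^\preceq;\Z_p)}(t)=(1+t)^n$ for every prime $p$, and combining this with the characteristic-$0$ value $\HP_{C_\star(\frak{gl}_n^\preceq;\Q)}(t)=(1+t)^n$ gives $\dim_K H_k(\frak{gl}_n^\preceq;K)=\binom{n}{k}$ for every field $K$ and every $k$.

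To pass to $\Z$ coefficients I would use that $C_\star(\frak{gl}_n^\preceq;\Z)=\Lambda^{\!\star}\frak{gl}_n^\preceq$ is a complex of free abelian groups, so the universal coefficient theorem applies. Writing $H_k(\frak{gl}_n^\preceq;\Z)\cong\Z^{r_k}\oplus T_k$ with $T_k$ finite, the rational computation gives $r_k=\binom{n}{k}$, while over $\Z_p$ it gives $\binom{n}{k}=r_k+t_k^{(p)}+t_{k-1}^{(p)}$, where $t_j^{(p)}$ denotes the number of cyclic summands of $T_j$ whose order is divisible by $p$. Hence $t_k^{(p)}+t_{k-1}^{(p)}=0$, and induction on $k$ forces every $t_k^{(p)}$ to vanish, so $T_k=0$ and $H_k(\frak{gl}_n^\preceq;\Z)\cong\Z^{\binom{n}{k}}$. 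I do not expect any real obstacle: the combinatorial heart is the forest-has-a-leaf observation, which is already in place from the preceding corollary, and the only point needing care is the standard universal-coefficient bookkeeping in this last step.
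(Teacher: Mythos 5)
Your proposal is correct and takes essentially the same route as the paper: the combinatorial core in both is that any subgraph of a tree is a forest and hence has a leaf (a vertex of odd/degree-one weight), so $C_{\star,p}$ is empty for every prime $p$, which together with $H_k(\frak{gl}_n^\preceq;\Q)\cong\Q^{\binom nk}$ gives the result. You merely spell out more explicitly the step the paper leaves implicit — invoking \cite[6.1]{citeLampretVavpeticCLAAMT} to get $\HP_{C_\star(\frak{gl}_n^\preceq;\Z_p)}=(1+t)^n$ and then the universal-coefficient bookkeeping $r_k+t_k^{(p)}+t_{k-1}^{(p)}=\binom nk$ forcing all $t_k^{(p)}=0$ — which is sound.
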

\begin{proof} For any $\preceq$ we have $H_k(\frak{gl}^\preceq_n;\Q)\cong \Q^{\binom{n}{k}}$. In the case of a tree, any subgraph of $\overline{\Gamma}_{\!\preceq}$ is a forest, hence for any prime $p$ there are no $p^+$\!-regular subgraphs. Thus $H_k(\frak{gl}^\preceq_n;\Z)$ does not contain any torsion, and the result follows.
\end{proof}
\vspace{2mm}

\subsection{Complete bipartite posets} Let our poset $\preceq$ be given by the Hasse diagram below which is a complete bipartite graph. Thus $\dim \frak{gl}^\preceq_{m+n}=m\!+\!n\!+\!mn$.\vspace{-2mm}
$$\begin{tikzpicture}[baseline=(m.center)]\matrix (m) [matrix of math nodes, row sep=20pt, column sep=13pt]{
b_1 &b_2 & \ldots & b_n\\
a_1 &a_2 & \ldots & a_m&\\ };
\draw(m-2-1)--(m-1-1); \draw(m-2-1)--(m-1-2); \draw(m-2-1)--(m-1-3); \draw(m-2-1)--(m-1-4);
\draw(m-2-2)--(m-1-1); \draw(m-2-2)--(m-1-2); \draw(m-2-2)--(m-1-3); \draw(m-2-2)--(m-1-4);
\draw(m-2-3)--(m-1-1); \draw(m-2-3)--(m-1-2); \draw(m-2-3)--(m-1-3); \draw(m-2-3)--(m-1-4);
\draw(m-2-4)--(m-1-1); \draw(m-2-4)--(m-1-2); \draw(m-2-4)--(m-1-3); \draw(m-2-4)--(m-1-4); \end{tikzpicture}\hspace{2mm} \frak{gl}^\preceq=\bigg\{ \raisebox{-6mm}{\includegraphics[width=0.11\textwidth]{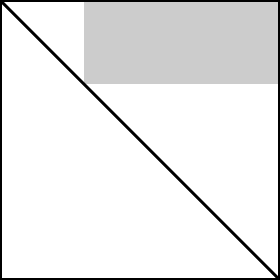}}\bigg\}\vspace{-2mm}$$

\begin{Prp} $\dim H_k(C_{\star,p}(\frak{gl}_n^\prec))$ equals the number of all $m\!\times\!n$ matrices with $k$ entries $1$ and $mn\!-\!k$ entries $0$, such that the sum of every row/column is in $p\Z$.
\end{Prp}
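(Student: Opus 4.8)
The plan is to reduce the statement to a pure counting problem by observing that on a height-$1$ poset the Chevalley differential of $\frak{gl}^\prec$ is identically zero. Indeed, a nonzero bracket $[e_{ij},e_{kl}]$ requires a chain $i\!\prec\!j\!\prec\!l$ or $k\!\prec\!i\!\prec\!j$ of length $2$, which does not exist when the height is $1$; hence $\partial\!=\!0$ on $C_\star(\frak{gl}^\prec)$, and therefore also on its direct summand $C_{\star,p}(\frak{gl}^\prec)$. Consequently $H_k\big(C_{\star,p}(\frak{gl}^\prec)\big)=C_{k,p}(\frak{gl}^\prec)$, and it suffices to count the basis wedges spanning $C_{k,p}(\frak{gl}^\prec)$, i.e. the wedges of $k$ distinct matrix units with weight vector in $(p\Z)^{m+n}$.

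Next I would set up the bijection with $0/1$ matrices. Write the bottom row of the complete bipartite Hasse diagram as $a_1,\ldots,a_m$ and the top row as $b_1,\ldots,b_n$, so that $i\!\prec\!j$ holds precisely for the pairs $(a_r,b_s)$ and $\frak{gl}^\prec$ has module basis $\{e_{a_rb_s};\,r\!\in\![m],\,s\!\in\![n]\}$. A basis element of $C_k(\frak{gl}^\prec)$ is a wedge $v=e_{a_{r_1}b_{s_1}}\!\wedge\cdots\wedge e_{a_{r_k}b_{s_k}}$ of $k$ distinct such units; assigning to $v$ the matrix $M_v\in\{0,1\}^{m\times n}$ with $(M_v)_{rs}=1$ iff $e_{a_rb_s}\in v$ gives a bijection between $k$-wedges and $m\times n$ matrices with exactly $k$ ones.

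It then remains to translate the weight condition. Since each $a_r$ occurs in a wedge only as a left index and each $b_s$ only as a right index, the weight vector of $v$ has $w_{a_r}=-\,|\{s;\,e_{a_rb_s}\in v\}|=-\sum_s (M_v)_{rs}$ (minus the $r$-th row sum) and $w_{b_s}=|\{r;\,e_{a_rb_s}\in v\}|=\sum_r (M_v)_{rs}$ (the $s$-th column sum). Hence $v\in C_{k,p}(\frak{gl}^\prec)$, i.e. $w_v\in(p\Z)^{m+n}$, if and only if every row sum and every column sum of $M_v$ lies in $p\Z$. Combining this with the first paragraph yields $\dim H_k\big(C_{\star,p}(\frak{gl}^\prec)\big)=\#\{M\in\{0,1\}^{m\times n};\ M\text{ has }k\text{ ones and all row/column sums in }p\Z\}$, as claimed. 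There is no real obstacle here beyond carefully unwinding the definition of the weight vector for the bipartite Hasse diagram; the only conceptual input is the vanishing of $\partial$, which is immediate from the height-$1$ hypothesis.
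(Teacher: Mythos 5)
Your argument is correct and matches the paper's: the paper packs the vanishing of $\partial$ on height-$1$ posets and the translation of the $(p\Z)$-weight condition into ``$p^+$\!-regular subgraphs'' into the preamble of Section~\ref{HGT1}, and the proof of this proposition then just identifies $p^+$\!-regular subgraphs of the complete bipartite graph with $0/1$-matrices whose row and column sums lie in $p\Z$. You have simply spelled out those same two steps inline, including the sign bookkeeping $w_{a_r}=-\!\sum_s (M_v)_{rs}$ and $w_{b_s}=\sum_r (M_v)_{rs}$, which is exactly the content being used.
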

\begin{proof} The edges in $\overline{\Gamma}_{\!\preceq}$ correspond to elements of $[m]\!\times\![n]$, hence the wedges of $\overline{\Gamma}_{\preceq}$ correspond to subsets of $[m]\!\times\![n]$, or equivalently, to $0/1$-matrices of size $m\!\times\!n$. The $p^+$\!-regular subgraphs of $\overline{\Gamma}_{\preceq}$ correspond to those $0/1$-matrices in which the sum of every column and the sum of every row is a multiple of $p$.
\end{proof}\vspace{2mm}

Let $\HP_{m,n,p}$ denote the generating function for the homology of our $\frak{gl}_{m+n}^\preceq(\Z_p)$. Thus $\HP_{m,n,p} \!=\! \HP_{n,m,p}$ and $\HP_{1,n,p} \!=\! (1\!+\!t)^{n+1}$ and $\HP_{m,n,p} \!=\! (1\!+\!t)^{m+n}$ for $p\!>\!\min\{m,n\}$. Let $\epsilon=e^{2\pi\mathbf{i}/p}\!\in\!\C$ be the $p$-th primitive root of unity.
\begin{Prp}\label{pnp} $\HP_{p,n,p}=(1\!+\!t)^{p+n}\sum_{i\in p\N}\!\binom{n}{i}t^{pi}=\frac{(1+t)^{p+n}}{p}\sum_{j\in[p]}(1\!+\!\epsilon^j\,t^p)^n$.
\end{Prp}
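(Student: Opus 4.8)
The plan is to chain together three facts already available in the excerpt: the combinatorial description of $C_{\star,p}(\frak{gl}_{p+n}^\prec)$ furnished by the preceding proposition, the product formula $\HP_{C_\star(\frak{gl}_N^\preceq;\Z_p)}(t)=(1+t)^{N}\,\HP_{C_{\star,p}(\frak{gl}_N^\prec;\Z_p)}(t)$ valid in characteristic $p$ (from \cite[6.1]{citeLampretVavpeticCLAAMT}), and the discrete Fourier identity recalled in Remark~\ref{DFT}.

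First I would pin down $\HP_{C_{\star,p}(\frak{gl}_{p+n}^\prec;\Z_p)}(t)$. Since the poset has height $1$, the Lie algebra $\frak{gl}_{p+n}^\prec$ is abelian, so the differential of $C_{\star,p}$ vanishes; by the preceding proposition, $\dim H_k(C_{\star,p}(\frak{gl}_{p+n}^\prec))$ equals the number of $p\times n$ matrices with entries in $\{0,1\}$, exactly $k$ ones, and every row sum and column sum divisible by $p$. The crux of the argument is the observation that such a matrix has only $p$ rows, so each column sum lies in $\{0,1,\ldots,p\}$, and divisibility by $p$ forces every column to be the all-zero or the all-one column. Hence the matrix is precisely the datum of the subset $S\subseteq[n]$ of all-one columns; the row-sum condition then reads $|S|\in p\Z$, and the number of ones is $k=p|S|$. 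Counting these gives
$$\HP_{C_{\star,p}(\frak{gl}_{p+n}^\prec;\Z_p)}(t)=\sum_{i\in p\N}\binom{n}{i}t^{pi}.$$

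Next, substituting this into the product formula with $N=p+n$ yields at once the first claimed equality $\HP_{p,n,p}=(1+t)^{p+n}\sum_{i\in p\N}\binom{n}{i}t^{pi}$. For the second equality I would apply the discrete Fourier lemma from Remark~\ref{DFT} to $f(t)=(1+t)^n=\sum_k\binom{n}{k}t^k$ with $j=p$ and $l=0$: the sum of every $p$-th coefficient, with the exponent rescaled by $p$, is $\sum_{i\in p\N}\binom{n}{i}t^{pi}=\frac1p\sum_{j\in[p]}f(\epsilon^j t^p)=\frac1p\sum_{j\in[p]}(1+\epsilon^j t^p)^n$, and multiplying through by $(1+t)^{p+n}$ finishes the proof. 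The only genuine obstacle is recognizing the ``constant columns'' reduction that collapses the matrix count to a single binomial series; once that is in place, the rest is bookkeeping with the two cited formulas.
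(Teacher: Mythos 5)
Your proof is correct and follows essentially the same route as the paper: the paper phrases the key reduction in graph-theoretic terms (each $b_i$ has degree $0$ or $p$ because it has only $p$ neighbors, forcing the choice of an $i\in p\N$-subset of the $b$'s), whereas you phrase it via the $0/1$-matrix description of the preceding proposition (columns must be constant because a column of length $p$ with sum in $p\Z$ is all-zero or all-one), but these are the same observation through the bijection already set up between $p^+$-regular subgraphs and even matrices. Both then invoke the product formula $\HP_{m,n,p}=(1+t)^{m+n}\HP_{C_{\star,p}}$ and the DFT lemma of Remark~\ref{DFT} in the same way.
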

\begin{proof} If $m\!=\!p$, then in a $p^+$\!-regular subgraph, every $b_i$ has degree $p$ or $0$. Thus obtaining a $p^+$\!-regular subgraph means that out of $b_1,\ldots,b_n$ we must choose $i\!\in\!p\N$ elements. Such a subgraph has $pi$ edges. The second equality follows from \ref{DFT}.
\end{proof}\vspace{2mm}

\begin{Prp}\label{Stanley} \!\!$\frac{(1+t)^{m+n}}{2^{m+n}}\sum_{i,j}\!\binom{m}{i}\binom{n}{j}(1\!+\!t)^{(m\!-\!i)(n\!-\!j)+ij}(1\!-\!t)^{(m\!-\!i)j+(n\!-\!j)i}$ equals $\HP_{m,n,2}$, so $\dim H_k(\frak{gl}_{m+n}^\preceq;\Z_2)=\sum_{i,j,l}\!\frac{\binom{m}{i}\binom{n}{j}\binom{(m\!-\!i)(n\!-\!j)+ij+m+n}{l}\binom{(m\!-\!i)j+(n\!-\!j)i}{k-l}(-\!1)^{k-l}}{2^{m+n}}$.
\end{Prp}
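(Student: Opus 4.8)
The plan is to chain together the two earlier reductions and then apply a standard parity character-sum (inclusion--exclusion over line sums). By the identity $\HP_{\!C\!_\star\!(\frak{gl}_N^\preceq)}(t)=(1\!+\!t)^N\cdot\HP_{\!C_{\star,p}(\frak{gl}_N^\prec)}(t)$ with ambient size $N=m+n$ and $p=2$, together with the preceding proposition that identifies $\dim H_k\!\big(C_{\star,2}(\frak{gl}^\prec)\big)$ with the number of $m\!\times\!n$ matrices over $\{0,1\}$ with $k$ ones and all row sums and column sums even, it suffices to prove that the generating function
$$E_{m,n}(t)=\sum_{A}t^{|A|},$$
where $A$ ranges over such matrices and $|A|$ is its number of ones, equals $\frac{1}{2^{m+n}}\sum_{i,j}\binom{m}{i}\binom{n}{j}(1\!+\!t)^{(m-i)(n-j)+ij}(1\!-\!t)^{(m-i)j+(n-j)i}$. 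Multiplying by $(1\!+\!t)^{m+n}$ gives the first assertion, and expanding the binomials gives the second.

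To compute $E_{m,n}(t)$, I would encode the parity constraints by the elementary identity $\tfrac{1}{2}\big(1+(-1)^s\big)=\mathbf 1_{\,s\text{ even}}$ for integers $s\ge 0$. Summing over \emph{all} $0/1$ matrices $A=(a_{rs})$ without constraint,
$$E_{m,n}(t)=\sum_A t^{|A|}\prod_{r=1}^{m}\frac{1+(-1)^{\sum_s a_{rs}}}{2}\ \prod_{s=1}^{n}\frac{1+(-1)^{\sum_r a_{rs}}}{2}.$$
Expanding each product over subsets $I\subseteq[m]$ of rows and $J\subseteq[n]$ of columns --- choosing $I,J$ meaning one picks the $(-1)^{(\cdot)}$ summand exactly in those factors --- gives
$$E_{m,n}(t)=\frac{1}{2^{m+n}}\sum_{I\subseteq[m]}\sum_{J\subseteq[n]}\ \sum_A t^{|A|}\,(-1)^{\,\sum_{r\in I}\sum_s a_{rs}+\sum_{s\in J}\sum_r a_{rs}}.$$

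Now the innermost sum factors over the $mn$ entries of $A$, since both $t^{|A|}=\prod_{r,s}t^{a_{rs}}$ and the sign $\prod_{r,s}(-1)^{a_{rs}(\mathbf 1_{r\in I}+\mathbf 1_{s\in J})}$ are multiplicative in the $a_{rs}$. For a fixed cell $(r,s)$ the local factor is $\sum_{a\in\{0,1\}}t^{a}(-1)^{a(\mathbf 1_{r\in I}+\mathbf 1_{s\in J})}=1+t\,(-1)^{\mathbf 1_{r\in I}+\mathbf 1_{s\in J}}$, which is $1+t$ when $r,s$ are both in or both out of $I,J$, and $1-t$ when exactly one of them is in. Writing $i=|I|$, $j=|J|$, there are $ij$ cells with $r\in I,s\in J$, $(m-i)(n-j)$ with $r\notin I,s\notin J$, and $i(n-j)+(m-i)j=(m-i)j+(n-j)i$ cells of mixed type; hence the inner sum equals $(1+t)^{(m-i)(n-j)+ij}(1-t)^{(m-i)j+(n-j)i}$, depending only on $i,j$. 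Summing over the $\binom{m}{i}$ choices of $I$ and $\binom{n}{j}$ choices of $J$ yields exactly the claimed expression for $E_{m,n}(t)$, and therefore for $\HP_{m,n,2}=(1+t)^{m+n}E_{m,n}(t)$. Finally, for the dimension formula, merge $(1+t)^{m+n}$ with $(1+t)^{(m-i)(n-j)+ij}$ into $(1+t)^{(m-i)(n-j)+ij+m+n}$, expand this and $(1-t)^{(m-i)j+(n-j)i}$ by the binomial theorem, and read off the coefficient of $t^k$ by letting the $(1+t)$-index be $l$ and the $(1-t)$-index be $k-l$. No step is a real obstacle; the only care needed is the bookkeeping of cell types in the factorization --- in particular recognizing that the mixed count $i(n-j)+(m-i)j$ is the $(m-i)j+(n-j)i$ appearing in the statement --- and carrying the sign $(-1)^{k-l}$ through the $(1-t)$-expansion.
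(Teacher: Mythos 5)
Your proof is correct and is essentially the paper's own argument, just phrased with subsets $I\subseteq[m]$, $J\subseteq[n]$ instead of sign vectors $x\in\{-1,1\}^m$, $y\in\{-1,1\}^n$ (the two formulations are in bijection via $I=\{i:x_i=-1\}$). The sequence of steps --- reduce to counting even $0/1$-matrices, encode the parity constraints with $\tfrac12(1+(-1)^s)$, swap summation order, factor the inner sum cell by cell into $\prod(1\pm t)$, and count the cell types --- matches the paper's six-line derivation exactly, and your final binomial expansion of $(1+t)^{(m-i)(n-j)+ij+m+n}(1-t)^{(m-i)j+(n-j)i}$ correctly yields the coefficient formula.
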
 \vspace{-0mm} This formula and its proof are due to Richard Stanley (personal communication).\vspace{-0mm}
\begin{proof} We call a $0/1$-matrix $E$ \emph{even} when the sum of every column and row is even. 
\begin{longtable}[c]{l}
$\HP_{m,n,2}=(1\!+\!t)^{m+n}\sum_{E\in\{0,1\}^{m\!\times\!n}\!\text{ even}}t^{\sum_{ij}\!E_{ij}}$\\
$\overset{(1)}{=} \frac{(1+t)^{m+n}}{2^{m+n}}\! \sum_{E\in\{0,1\}^{m\!\times\!n}}t^{\sum_{ij}\!E_{ij}} \prod_i\!\big(1\!+\!(-1)^{\sum_j\!E_{ij}}\big)\prod_j\!\big(1\!+\!(-1)^{\sum_i\!E_{ij}}\big)$\\
$\overset{(2)}{=} \frac{(1+t)^{m+n}}{2^{m+n}}\! \sum_{E\in\{0,1\}^{m\!\times\!n}}t^{\sum_{ij}\!E_{ij}} \Big(\!\sum_{x\in\{-1,1\}^m}\!\prod_i\!x_i^{\sum_j\!E_{ij}}\!\Big)\Big(\!\sum_{y\in\{-1,1\}^n}\!\prod_j\!y_j^{\sum_i\!E_{ij}}\!\Big)$\\
$\overset{(3)}{=} \frac{(1+t)^{m+n}}{2^{m+n}}\! \sum_{E\in\{0,1\}^{m\!\times\!n}}t^{\sum_{ij}\!E_{ij}} \sum_{x\in\{-1,1\}^m\!,\,y\in\{-1,1\}^n}\! \Big(\!\prod_i\!x_i^{\sum_j\!E_{ij}}\!\Big) \Big(\!\prod_j\!y_j^{\sum_i\!E_{ij}}\!\Big)$\\
$\overset{(4)}{=} \frac{(1+t)^{m+n}}{2^{m+n}}\! \sum_{x\in\{-1,1\}^m\!,\,y\in\{-1,1\}^n}\! \sum_{E\in\{0,1\}^{m\!\times\!n}}\prod_{i,j}(x_iy_jt)^{E_{ij}}$\\
$\overset{(5)}{=} \frac{(1+t)^{m+n}}{2^{m+n}}\! \sum_{x\in\{-1,1\}^m\!,\,y\in\{-1,1\}^n}\! \prod_{i,j}(1\!+\!x_iy_jt)$\\
$\overset{(6)}{=} \frac{(1+t)^{m+n}}{2^{m+n}}\sum_{i,j}\!\binom{m}{i}\binom{n}{j}(1\!+\!t)^{(m\!-\!i)(n\!-\!j)+ij}(1\!-\!t)^{(m\!-\!i)j+(n\!-\!j)i}$.\\
\end{longtable}\vspace{-1mm}
\noindent Equation $(1)$ holds, since \smash{$\prod_i\!\big(1\!+\!(-1)^{\sum_j\!E_{ij}}\big)\prod_j\!\big(1\!+\!(-1)^{\sum_i\!E_{ij}}\big)=0$} whenever the sum of a row $\sum_j\!E_{ij}$ or column $\sum_i\!E_{ij}$ is odd. Furthermore, $(2)$ is by the multibinomial theorem, $(3)$ is just distributivity, $(4)$ is just rearranging the sums and products. Equation $(5)$ holds, because writing $\prod_{i,j}(1\!+\!x_iy_jt)$ as a sum, there are $2^{mn}$ choices (= $0/1$-matrix) of picking $1$ or $x_iy_jt$ as a factor. Equation $(6)$ holds, since if we choose $i$ out of $m$ minuses for $x$ and $j$ out of $n$ minuses for $y$, then $x_iy_j\!=\!1$ for $(m\!-\!i)(n\!-\!j)+ij$ pairs and $x_iy_j\!=\!-1$ for $(m\!-\!i)j+(n\!-\!j)i$ pairs.
\end{proof}\vspace{1mm}

\begin{Rmk}\label{Konvalinka} We were pleasantly surprised that such beautiful and striking formulas can be derived. Moreover, by using the transfer matrix method \cite[4.7, p.500]{citeStanleyEC}, Matjaž Konvalinka obtained an alternative formula: \vspace{-2mm}
\begin{longtable}[c]{r@{$\;\;=\;\;$}l}
$\HP_{m,n,2}$ & $\frac{(1+t)^{m+n}}{2^n}\sum_k\! \binom{n}{k} \Big( \sum_i\! \big(-\!\binom{n}{2i} + 2\sum_j\!\binom{k}{2j}\binom{n-k}{2i-2j}\big) t^{2i}\Big)^{\!\!m}$.
\end{longtable}\vspace{-2mm}
\noindent In fact, the transfer matrix of this 2-dimensional sequence $\HP_{m,n,2}$ possesses a number of pleasant properties (symmetry, orthogonal set of eigenvectors, etc).
\end{Rmk}\vspace{1mm}

\begin{Rmk} In particular, the first few generating functions are\vspace{-2mm}
\begin{longtable}[c]{l}
$\HP_{2,n,2}=\frac{(1+t)^{n+2}\big((1-t^2)^n+(1+t^2)^n\big)}{2}$,\\
$\HP_{3,n,2}=\frac{(1+t)^{n+3}\big(3(1-t^2)^n+(1+3t^2)^n\big)}{4}$,\\
$\HP_{4,n,2}=\frac{(1+t)^{n+4}\big(3(1-t^2)^{2n}+4(1-t^4)^n+(1+6t^2+t^4)^n\big)}{8}$,\\
$\HP_{5,n,2}=\frac{(1+t)^{n+5}\big(10(1-t^2)^{2n}+5(1+2t^2-3t^4)^n+(1+10t^2+5t^4)^n\big)}{16}$.
\end{longtable}\vspace{-2mm}
\noindent Since these are not of the form $\prod_i(1\!+\!t^{d_i})$, it follows that the cohomology is not isomorphic to any exterior polynomial algebra (it is a proper quotient, there are nontrivial relations). In fact, by \ref{cupproduct} the cup product corresponds to the wedge product, so multiplying two generators (= 0/1 matrices $E$ and $E'$) in $H^\ast(\frak{gl}_{m+n}^\preceq;\Z_p)$ gives either $0$ (when they both have value $1$ at some same entry: $\exists i,j\!:\, E_{ij}\!=\!1\!=\!E'_{ij}$) or their sum (when the matrices have disjoint supports: $\forall i,j\!:\, E_{ij}E'_{ij}\!=\!0$). An interesting problem is to obtain a presentation, or at least the minimal number of generators for the cohomology algebra: how many even $0/1$-matrices does one need so that every even $0/1$-matrix is a product (=disjoint sum) of these?
\end{Rmk}

\vspace{4mm}
\section{Posets of height 2}\label{HGT2}  In this section, all posets are assumed to have height $2$. A wedge in $C_{\star\!,p}(\frak{gl}_n^\prec)$ in this case corresponds to a subset of edges of the Hasse diagram $\Gamma_{\!\preceq}$, together with the set of all directed paths of length $2$ (two paths are identified when they have the same endpoints). For a given family of Hasse diagrams indexed by $n\!\in\!\N$, we denote by $\HP_{n,p}$ the generating function of the (co)homology of $\frak{gl}^\preceq(\Z_p)$. \vspace{2mm}

\subsection{Fork posets} Let our poset $\preceq$ be given by the Hasse diagram below left. Thus $\dim \frak{gl}^\preceq_{2n+1}=5n\!+\!1$. A specific faithful representation is shown below right.\vspace{-2mm}
$$\begin{tikzpicture}[baseline=(m.center)]\matrix (m) [matrix of math nodes, row sep=13pt, column sep=5pt]{
c_1 &c_2 & \ldots & c_{n\!-\!1}& c_n\\
b_1 &b_2 & \ldots & b_{n\!-\!1}& b_n\\
&&a&&\\ };
\draw(m-3-3)--(m-2-1); \draw(m-3-3)--(m-2-2); \draw(m-3-3)--(m-2-3); \draw(m-3-3)--(m-2-4); \draw(m-3-3)--(m-2-5);
\draw(m-2-1)--(m-1-1); \draw(m-2-2)--(m-1-2); \draw(m-2-3)--(m-1-3); \draw(m-2-4)--(m-1-4); \draw(m-2-5)--(m-1-5); \end{tikzpicture}\hspace{8mm}\frak{gl}^\preceq\!=\!\bigg\{ \raisebox{-6mm}{\includegraphics[width=0.11\textwidth]{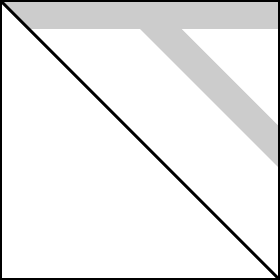}}\bigg\}\vspace{-2mm}$$

\begin{Prp} $\HP_{\!n,2}\!=\!(1\!+\!t)^{2n+1}(1\!+\!t^3)^n$ and $\HP_{\!n,p}\!=\!(1\!+\!t)^{2n+1}$ for $p\!\geq\!3$.
\end{Prp}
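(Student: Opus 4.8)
The plan is to reduce everything to the $p$-complex by means of the identity $\HP_{C_\star(\frak{gl}_m^\preceq)}(t)=(1+t)^m\cdot\HP_{C_{\star,p}(\frak{gl}_m^\prec)}(t)$ recorded in Section 2, applied with $m=2n+1$ (the number of elements of the fork poset). Thus $\HP_{n,p}=(1+t)^{2n+1}\cdot\HP_{C_{\star,p}(\frak{gl}_{2n+1}^\prec)}(t)$, and it suffices to identify the $p$-complex of $\frak{gl}_{2n+1}^\prec$ together with its differential. Writing $a$ for the bottom element, $b_1,\dots,b_n$ for the middle layer, and $c_1,\dots,c_n$ for the top, the strict relation $\prec$ consists of the covers $a\!\prec\!b_i$ and $b_i\!\prec\!c_i$ together with their transitive consequences $a\!\prec\!c_i$; hence $\frak{gl}_{2n+1}^\prec$ has basis $\{e_{ab_i},e_{b_ic_i},e_{ac_i}:i\in[n]\}$, and the only nonzero bracket of two basis matrices is $[e_{ab_i},e_{b_ic_i}]=e_{ac_i}$.

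First I would compute the weight vector $w_v\in\Z^{2n+1}$. A wedge $v\in C_\star(\frak{gl}_{2n+1}^\prec)$ is determined by three subsets $S,T,U\subseteq[n]$ — the sets of indices $i$ for which $e_{ab_i}$, $e_{b_ic_i}$, $e_{ac_i}$ respectively occur in $v$ — and then, writing $\mathbf 1_X(i)\in\{0,1\}$ for membership, the coordinate of $w_v$ at $b_i$ equals $\mathbf 1_S(i)-\mathbf 1_T(i)$, at $c_i$ equals $\mathbf 1_T(i)+\mathbf 1_U(i)$, and at $a$ equals $-|S|-|U|$. Requiring $w_v\in(p\Z)^{2n+1}$: the $b_i$-coordinates force $S=T$; the $c_i$-coordinates then require $\mathbf 1_T(i)+\mathbf 1_U(i)\in p\Z$, and since this number lies in $\{0,1,2\}$ it is a multiple of $p$ only when it equals $0$ (any $p$) or equals $2$ with $p=2$. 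Hence for $p\!\geq\!3$ one is forced to $S=T=U=\emptyset$, so $C_{\star,p}(\frak{gl}_{2n+1}^\prec)$ is the empty complex and $\HP_{n,p}=(1+t)^{2n+1}$; while for $p=2$ one is forced to $S=T=U$, so $C_{\star,2}(\frak{gl}_{2n+1}^\prec)$ has $\Z_2$-basis the $2^n$ wedges $v_R=\bigwedge_{i\in R}e_{ab_i}e_{b_ic_i}e_{ac_i}$, $R\subseteq[n]$, with $v_R$ in homological degree $3|R|$.

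The remaining step is to observe that $C_{\star,2}(\frak{gl}_{2n+1}^\prec)$ carries the zero differential. In $\partial(v_R)$ the only pairs of factors with nonzero bracket are the pairs $(e_{ab_i},e_{b_ic_i})$ with $i\in R$, and each such bracket equals $e_{ac_i}$, which is already a factor of $v_R$; thus every term of $\partial(v_R)$ contains $e_{ac_i}\wedge e_{ac_i}=0$, so $\partial(v_R)=0$. Therefore $H_\ast(C_{\star,2}(\frak{gl}_{2n+1}^\prec))=C_{\star,2}(\frak{gl}_{2n+1}^\prec)$ as graded modules, with Hilbert series $\sum_{R\subseteq[n]}t^{3|R|}=(1+t^3)^n$, and substituting into the first display gives $\HP_{n,2}=(1+t)^{2n+1}(1+t^3)^n$.

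I do not expect a genuine obstacle here; the only point requiring care is the weight bookkeeping, in particular the observation that $\mathbf 1_T(i)+\mathbf 1_U(i)=1$ is never a multiple of $p$, which is exactly what collapses the $p\!\geq\!3$ case and pins down $S=T=U$ when $p=2$. As a sanity check one may also note that $\frak{gl}_{2n+1}^\prec\cong\frak{nil}_3^{\oplus n}$ — the $n$ arms of the fork do not interact under the bracket — so a Künneth argument recovers the $\frak{gl}^\prec$ side as well; but the $p$-complex route above is the most direct and simultaneously delivers the $\frak{gl}^\preceq$ statement.
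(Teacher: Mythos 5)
Your proof is correct and follows essentially the same route as the paper: reduce to the $p$-complex via $\HP_{C_\star(\frak{gl}^\preceq)}=(1+t)^{2n+1}\HP_{C_{\star,p}(\frak{gl}^\prec)}$, observe from the weight bookkeeping that $C_{\star,p}$ is empty for $p\geq3$ and spanned by $\{\bigwedge_{i\in R}e_{ab_i}e_{b_ic_i}e_{ac_i};\,R\subseteq[n]\}$ for $p=2$, and note the differential vanishes because every bracket $[e_{ab_i},e_{b_ic_i}]=e_{ac_i}$ already occurs as a factor. You merely carry out the weight computation in more explicit detail than the paper (which simply remarks that only $a$ can have weight beyond $\pm2$ and that each $b_i$ has weight $0$ or $\pm1$), and you add a correct sanity check via $\frak{gl}_{2n+1}^\prec\cong\frak{nil}_3^{\oplus n}$.
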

\begin{proof} Only index $a$ can have weight more than $\pm2$, so $C_{\star,p}$ is empty for $p\!\geq\!3$. Every $b_i$ has weight $0$ or $\pm1$, so $C_{\star,2}$ is spanned by $\big\{\bigwedge_{i\in\sigma}\!e_{ab_i}e_{b_ic_i}e_{ac_i};\,\sigma\!\subseteq\![n]\big\}$, and the boundary of these wedges is $0$, hence $\HP_{C_{\star,2}}=\sum_{k\in\N}\binom{n}{k}t^{3k}=(1\!+\!t^3)^{n}$.
\end{proof}\vspace{2mm}

\subsection{Umbrella posets} Let our poset $\preceq$ be given by the Hasse diagram below left. Thus $\dim \frak{gl}^\preceq_{n+2}=3n\!+\!3$. An example of a faithful representation is below right.\vspace{-2mm}
$$\begin{tikzpicture}[baseline=(m.center)]\matrix (m) [matrix of math nodes, row sep=13pt, column sep=5pt]{
c_1 &c_2 & \ldots & c_{n\!-\!1}& c_n\\
&&b&&\\
&&a&&\\ };
\draw(m-3-3)--(m-2-3); \draw(m-2-3)--(m-1-1); \draw(m-2-3)--(m-1-2); \draw(m-2-3)--(m-1-3); \draw(m-2-3)--(m-1-4); \draw(m-2-3)--(m-1-5); \end{tikzpicture}\hspace{8mm}\frak{gl}^\preceq\!=\!\bigg\{ \raisebox{-6mm}{\includegraphics[width=0.11\textwidth]{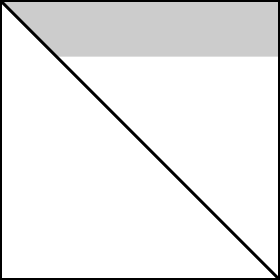}}\bigg\}\vspace{-2mm}$$
Notice that $\frak{gl}^\prec$ is the family of 2-step nilpotent Lie algebras from \cite{citeArmstrongCairnsJessupEBNFNLA}.\vspace{1mm}

\begin{Prp} $\HP_{\!n,2}\!=\!\frac{(1+t)^{n+3}(1+t^2)^n+(1+t)^{n+1}(1-t^2)^{n+1}}{2}$.
\end{Prp}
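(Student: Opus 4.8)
The plan is to identify the $p$-complex $C_{\star,2}(\frak{gl}^\prec)$ explicitly, compute its homology, and then multiply by $(1+t)^{n+2}$ using the formula $\HP_{\!C\!_\star\!(\frak{gl}_n^\preceq)}(t)=(1\!+\!t)^{\dim\frak{dgn}}\cdot\HP_{\!C_{\star,p}(\frak{gl}_n^\prec)}(t)$ from \cite[6.1]{citeLampretVavpeticCLAAMT}. Here the poset has $n+2$ elements ($a$, $b$, and $c_1,\dots,c_n$) so the diagonal part contributes $(1+t)^{n+2}$. The nondiagonal basis consists of $e_{ab}$, the $e_{ac_i}$, and the $e_{bc_i}$; the only nonzero brackets among these are $[e_{ab},e_{bc_i}]=e_{ac_i}$. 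A wedge lies in $C_{\star,2}$ exactly when every index has even weight. Index $c_i$ has weight equal to the number of $e_{ac_i}$ plus $e_{bc_i}$ incident to it, which is $0$, $1$, or $2$; evenness forces it to be $0$ or $2$, i.e. for each $i$ either both $e_{ac_i},e_{bc_i}$ are present or neither is. Index $b$ has weight $(\#\{e_{bc_i}\in v\})-(\text{$1$ if }e_{ab}\in v)$, and index $a$ has weight $-(\#\{e_{ac_i}\in v\})-(\text{$1$ if }e_{ab}\in v)$; combined with the condition on the $c_i$'s, this pins down whether $e_{ab}$ may appear.

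First I would parametrize the basis of $C_{\star,2}$: choose a subset $\sigma\subseteq[n]$ (the "active" indices, for which both $e_{ac_i}$ and $e_{bc_i}$ are included) and optionally include $e_{ab}$; the parity constraint at $b$ (and at $a$) says $e_{ab}$ may be included iff $|\sigma|$ is even. So the basis wedges are $u_\sigma=\bigwedge_{i\in\sigma}e_{ac_i}e_{bc_i}$ for any $\sigma\subseteq[n]$, together with $u_\sigma'=e_{ab}\wedge\bigwedge_{i\in\sigma}e_{ac_i}e_{bc_i}$ for $\sigma$ of even size. Next I would compute the differential: $\partial u_\sigma'$ comes from splitting $e_{ac_i}$ into $e_{ab}e_{bc_i}$ — wait, more carefully, the boundary $\partial(x_1\cdots x_k)=\sum_{r<s}(-1)^{r+s}[x_r,x_s]x_1\cdots\widehat{x_r}\cdots\widehat{x_s}\cdots$ acts on $u_\sigma'$ by bracketing $e_{ab}$ with each $e_{bc_i}$ ($i\in\sigma$) to produce $e_{ac_i}$, which already appears in the wedge, so those terms vanish; hence $\partial u_\sigma'$ is a sum over $i\in\sigma$ of $\pm u_{\sigma\setminus i}$ coming from the bracket $[e_{ab},e_{bc_i}]$ landing on an $e_{ac_i}$ that is \emph{not} already there — but it is there, so actually this needs the wedge to be antisymmetrized: the correct statement is that $\partial u_\sigma'=\sum_{i\in\sigma}\pm\,e_{ab}$-free terms only when a repeated factor does \emph{not} occur. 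I would check that in fact $\partial u_\sigma'=\bigl(\sum_{i\in\sigma}\pm1\bigr)u_\sigma$ collapses, because bracketing $e_{ab}$ with $e_{bc_i}$ yields $e_{ac_i}$ which is already a factor, forcing that summand to zero; what survives is bracketing $e_{ab}$ against nothing, so $\partial u_\sigma'\in\{0,\pm u_\sigma\}$ — more precisely $\partial u_\sigma' = (\text{sign})\,u_\sigma$ when... I would pin down the exact coefficient by a small direct computation and find that $C_{\star,2}$ is, up to signs, a direct sum over $\sigma$ with $|\sigma|$ even of short complexes $0\to\langle u_\sigma'\rangle\to\langle u_\sigma\rangle\to 0$ with $\partial$ an isomorphism (so these $\sigma$ contribute nothing to homology), plus the modules $\langle u_\sigma\rangle$ for $|\sigma|$ odd which survive in homology since no $u_\sigma'$ maps onto them.

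Consequently $\HP_{C_{\star,2}}(t)=\sum_{|\sigma|\text{ odd}}t^{2|\sigma|}=\sum_{k\text{ odd}}\binom{n}{k}t^{2k}=\tfrac12\bigl((1+t^2)^n-(1-t^2)^n\bigr)$, where I would use the Remark~\ref{DFT} bisection identity (or just $\tfrac12((1+x)^n-(1-x)^n)$ with $x=t^2$) to close the sum. Then $\HP_{\!n,2}=(1+t)^{n+2}\cdot\tfrac12\bigl((1+t^2)^n-(1-t^2)^n\bigr)$. To match the claimed form $\tfrac12\bigl((1+t)^{n+3}(1+t^2)^n+(1+t)^{n+1}(1-t^2)^{n+1}\bigr)$, factor $(1+t)^{n+2}=(1+t)^{n+1}(1+t)$ and note $(1+t)\cdot\tfrac12(1+t^2)^n - (1+t)\cdot\tfrac12(1-t^2)^n$; rewriting $-(1+t)(1-t^2)^n=-(1+t)(1-t)^n(1+t)^n$... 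I would instead verify the identity $(1+t)\bigl((1+t^2)^n-(1-t^2)^n\bigr)=(1+t)^2(1+t^2)^n+(1-t^2)(1-t^2)^n-\bigl((1+t^2)^n+(1-t^2)^n t\cdot(\dots)\bigr)$ — the cleanest route is to expand both the claimed expression and my expression as polynomials and check equality of $[t^0]$, $[t^{2k}]$, $[t^{2k+1}]$ coefficients, which reduces to Pascal-type identities. \textbf{The main obstacle} I anticipate is precisely this last bookkeeping step: confirming that the acyclic matching pairing $u_\sigma'$ with $u_\sigma$ (for $|\sigma|$ even) is genuinely a Morse matching with no ambiguity in signs and no leftover zig-zag contributions, and then massaging $\tfrac12(1+t)^{n+2}\bigl((1+t^2)^n-(1-t^2)^n\bigr)$ into the stated closed form — a routine but sign-sensitive algebraic rearrangement rather than a conceptual difficulty.
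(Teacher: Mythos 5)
There are several compounding errors in your proposal that break the proof.

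\textbf{(1) The parity constraint is backwards.}
With $\sigma$ the set of indices $i$ for which $e_{ac_i}, e_{bc_i}$ both appear, the weight at $b$ is $w_b = \epsilon - |\sigma|$, where $\epsilon = 1$ iff $e_{ab}\in v$. Evenness of $w_b$ (and of $w_a = -\epsilon-|\sigma|$) forces $\epsilon \equiv |\sigma|\pmod 2$, so $e_{ab}$ \emph{must} be present iff $|\sigma|$ is odd --- the opposite of what you wrote. The correct basis of $C_{\star,2}$ is $\{u_\sigma : |\sigma|\text{ even}\} \cup \{u'_\sigma : |\sigma|\text{ odd}\}$, not ``$u_\sigma$ for any $\sigma$'' together with ``$u'_\sigma$ for $|\sigma|$ even.''

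\textbf{(2) The differential is identically zero, not an isomorphism.}
Your own analysis correctly shows that the only bracket, $[e_{ab},e_{bc_i}]=e_{ac_i}$, lands on a factor $e_{ac_i}$ that is already present in the wedge, so that summand of $\partial u'_\sigma$ vanishes. Since these are the \emph{only} nonzero brackets, $\partial u'_\sigma = 0$ and $\partial u_\sigma = 0$. There is no ``bracketing $e_{ab}$ against nothing'' that survives --- the Chevalley differential is a sum over pairs and every pair vanishes. Your subsequent claim that $C_{\star,2}$ splits into short acyclic complexes $0\to\langle u'_\sigma\rangle\to\langle u_\sigma\rangle\to0$ with $\partial$ an isomorphism is simply false.

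\textbf{(3) The resulting generating function is wrong.}
You obtain $\HP_{C_{\star,2}} = \sum_{k\text{ odd}}\binom{n}{k}t^{2k}$, which at $n=0$ gives $\HP_{0,2}=0$, whereas the claimed formula gives $(1+t)^2$. The correct count is $\HP_{C_{\star,2}} = \sum_{k\text{ even}}\binom{n}{k}t^{2k} + \sum_{k\text{ odd}}\binom{n}{k}t^{2k+1}$ (every $u_\sigma$ with $|\sigma|$ even contributes in degree $2|\sigma|$, every $u'_\sigma$ with $|\sigma|$ odd contributes in degree $2|\sigma|+1$, and all of them survive to homology since the differential is zero). Closing these sums via the bisection identity gives $\frac{(1+t)(1+t^2)^n+(1-t)(1-t^2)^n}{2}$, and multiplying by $(1+t)^{n+2}$ and absorbing the factor $(1-t)$ into $(1-t^2)^n$ produces the stated closed form. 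Your framework (reduce to the $2$-complex, parametrize wedges by $\sigma$, multiply by $(1+t)^{n+2}$) is the right one and matches the paper, but the parity bookkeeping and the evaluation of the differential both need to be corrected before the calculation can go through.
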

\begin{proof} Only indices $a$ and $b$ can have weight more than $\pm2$, so $C_{\star,p}$ is empty for $p\!\geq\!3$. In a wedge $v\!\in\!C_{\star,2}$, an index $c_i$ has weight $0$ (iff $e_{ac_i},e_{bc_i}\!\notin\!v$) or $2$ (iff $e_{ac_i},e_{bc_i}\!\in\!v$), and for any choice $\sigma\!\subseteq\![n]$ that determines which $c_i$ appear in the wedge, $b$ has weight $-|\sigma|$ (iff $e_{ab}\!\notin\!v$) or $1\!-\!|\sigma|$ (iff $e_{ab}\!\in\!v$). Therefore the complex $C_{\star,2}$ admits a basis $\big\{e''_\sigma\!:=\!\bigwedge_{i\in\sigma}\!e_{ac_i}e_{bc_i};\,\sigma\!\subseteq\![n], |\sigma|\!\in\!2\N\big\} \!\cup\! \big\{e'_\sigma\!:=\!e_{ab}\bigwedge_{i\in\sigma}\!e_{ac_i}e_{bc_i};\,\sigma\!\subseteq\![n], |\sigma|\!+\!1\!\in\!2\N\big\}$, with $\partial(e''_\sigma)\!=\!0$ and $\partial(e'_\sigma)\!=\!0$. Thus \vspace{-2mm}
\begin{longtable}[c]{l}
\;\;$\dim H_{2k}(\frak{gl}^\preceq;\Z_2)=|\{\sigma\!\subseteq\![n];\, |\sigma|\!=\!k\!\in\!2\N\}|$ \;and\\
$\dim H_{2k+1}(\frak{gl}^\preceq;\Z_2)=|\{\sigma\!\subseteq\![n];\, |\sigma|\!=\!k\!\in\!2\N\!+\!1\}|$.
\end{longtable}\vspace{-2mm}
\noindent Since $\sum_{k\in 2\N}\!t^k\!\binom{n}{k} \!+\! \sum_{k\in 2\N+1}\!t^k\!\binom{n}{k} \!=\!(1\!+\!t)^n$\! and $\sum_{k\in 2\N}\!t^k\!\binom{n}{k} \!-\! \sum_{k\in 2\N+1}\!t^k\!\binom{n}{k} \!=\!(1\!-\!t)^n$\!, we have $\sum_{k\in 2\N}\!t^k\!\binom{n}{k} \!=\! \frac{(1+t)^n+(1-t)^n}{2}$ and $\sum_{k\in 2\N+1}\!t^k\!\binom{n}{k} \!=\! \frac{(1+t)^n-(1-t)^n}{2}$. Therefore\vspace{-1mm}
\begin{longtable}[c]{r@{\;$=$\;}l}
$\HP_{\!C\!_{\star\!,2}(\frak{gl}_n^\prec)}$ &$\sum_{k\in2\N}t^{2k}\binom{n}{k} + \sum_{k\in 2\N+1}t^{2k+1}\binom{n}{k}$\\[2pt]
      &$\frac{(1+t^2)^n+(1-t^2)^n}{2}+t\frac{(1+t^2)^n-(1-t^2)^n}{2} = \frac{(1+t)(1+t^2)^n+(1-t)(1-t^2)^n}{2}$.
\end{longtable}\vspace{-1mm} \noindent Then $\HP_{n,2}=(1\!+\!t)^{n+2}\HP_{\!C\!_{\star\!,2}}=\frac{(1+t)^{n+3}(1+t^2)^n+(1+t)^{n+1}(1-t^2)^{n+1}}{2}$.
\end{proof}\vspace{1mm}

\begin{Crl} If $\Z_2\!\leq\!R$, then there is an isomorphism of graded $R$-algebras\vspace{-1mm}
\begin{longtable}[c]{l}
$H^\ast(\frak{gl}^\preceq;R)\cong\Lambda\big[x_a,x_b,x_i,y_\omega,z_i;\, i\!\in\![n],\omega\!\in\!\binom{[n]}{2}\big]/I$,\\
$I=\langle y_{\{i,j\}}y_{\{j,k\}},y_{\{i,j\}}y_{\{k,l\}}\!-\!y_{\{i,k\}}y_{\{j,l\}},y_{\{i,j\}}z_i,y_{\{i,j\}}z_k\!-\!y_{\{i,k\}}z_j,z_iz_j\rangle$,
\end{longtable}\vspace{-2mm}
\noindent where $x_a\!\equiv\!e_{aa}$, $x_b\!\equiv\!e_{bb}$, $x_i\!\equiv\!e_{c_ic_i}$, $y_{\{i,j\}}\!\equiv\!e_{ac_i}e_{bc_i}e_{ac_j}e_{bc_j}$, $z_i\!\equiv\!e_{ab}e_{ac_i}e_{bc_i}$.
\end{Crl}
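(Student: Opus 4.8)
Since $\Z_2\!\leq\!R$ amounts to $2\!=\!0$ in $R$, and cohomology commutes with the exact, multiplicative base change $(-)\!\otimes_{\Z_2}\!R$ by the universal coefficient theorem, the plan is to first settle the case $R\!=\!\Z_2$ and then tensor up. By \cite[5.8]{citeLampretVavpeticCLAAMT}, \cite[6.1]{citeLampretVavpeticCLAAMT} and the computation in the preceding proposition, $H_\ast(\frak{gl}^\preceq;\Z_2)$ has a $\Z_2$-basis of wedges $w_d\wedge e''_\sigma$ (with $\sigma\!\subseteq\![n]$, $|\sigma|$ even) and $w_d\wedge e'_\sigma$ (with $\sigma\!\subseteq\![n]$, $|\sigma|$ odd), where $w_d\!=\!\bigwedge_{i\in d}e_{ii}$ runs over subsets $d$ of the diagonal indices $\{a,b,c_1,\ldots,c_n\}$ and $e''_\sigma\!=\!\bigwedge_{i\in\sigma}e_{ac_i}e_{bc_i}$, $e'_\sigma\!=\!e_{ab}\bigwedge_{i\in\sigma}e_{ac_i}e_{bc_i}$, and all induced differentials vanish. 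Hence, by \ref{cupproduct}, the dual classes $\chi_w$ form a basis of the graded algebra $H^\ast(\frak{gl}^\preceq;\Z_2)$, with $\chi_u\!\smile\!\chi_v\!=\!\chi_{u\wedge v}$, this being $0$ whenever $u$ and $v$ have a common tensor factor or $u\wedge v$ is not one of the listed wedges.

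Next I would define an algebra homomorphism $\phi$ from the exterior algebra $\Lambda[\ldots]$ in the statement by $x_a\!\mapsto\!\chi_{e_{aa}}$, $x_b\!\mapsto\!\chi_{e_{bb}}$, $x_i\!\mapsto\!\chi_{e_{c_ic_i}}$, $y_{\{i,j\}}\!\mapsto\!\chi_{e''_{\{i,j\}}}$, $z_i\!\mapsto\!\chi_{e'_{\{i\}}}$; this respects the relation ``the square of a generator vanishes'' because each target wedge has a repeated factor. Then I would check $\phi(I)\!=\!0$: the generators $y_{\{i,j\}}y_{\{j,k\}}$, $y_{\{i,j\}}z_i$, $z_iz_j$ map to $0$ since the corresponding wedge products repeat the factor $e_{ac_j}e_{bc_j}$, resp.\ $e_{ac_i}e_{bc_i}$, resp.\ $e_{ab}$, while $y_{\{i,j\}}y_{\{k,l\}}\!-\!y_{\{i,k\}}y_{\{j,l\}}$ and $y_{\{i,j\}}z_k\!-\!y_{\{i,k\}}z_j$ map to $0$ because in characteristic $2$ both summands of each equal $\chi_{e''_{\{i,j,k,l\}}}$, resp.\ $\chi_{e'_{\{i,j,k\}}}$ (signs being irrelevant). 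So $\phi$ descends to $\bar\phi\colon\Lambda[\ldots]/I\to H^\ast(\frak{gl}^\preceq;\Z_2)$, and it is onto: for even $\sigma\!=\!\{i_1,\ldots,i_{2m}\}$ we have $\bar\phi(y_{\{i_1,i_2\}}\cdots y_{\{i_{2m-1},i_{2m}\}})\!=\!\chi_{e''_\sigma}$, for odd $\sigma\!=\!\{i_0,\ldots,i_{2m}\}$ we have $\bar\phi(z_{i_0}y_{\{i_1,i_2\}}\cdots y_{\{i_{2m-1},i_{2m}\}})\!=\!\chi_{e'_\sigma}$, and the $x$'s give the $\chi_{w_d}$; so every basis class of $H^\ast$ is hit.

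It remains to prove $\bar\phi$ injective, for which I would bound $\dim_{\Z_2}(\Lambda[\ldots]/I)_k$ from above by $\dim_{\Z_2}H^k(\frak{gl}^\preceq;\Z_2)\!=\![t^k]\HP_{n,2}$. Since $I$ involves no $x$'s, $\Lambda[\ldots]/I\cong\Lambda[x_a,x_b,x_1,\ldots,x_n]\otimes A$, where $A$ is the algebra on the $y$'s and $z$'s modulo $I$. I claim $A$ is spanned by elements $p_U$ indexed by even-size subsets $U\!\subseteq\![n]\!\cup\!\{\infty\}$, where $p_U$ is the class of any product of generators corresponding to a perfect matching of $U$ (an edge inside $[n]$ contributing a factor $y_{\{i,j\}}$, the edge at $\infty$, if present, the factor $z_{i_0}$ with $i_0$ its other endpoint): the relations $y_{\{i,j\}}y_{\{j,k\}}$, $y_{\{i,j\}}z_i$, $z_iz_j$ together with the squaring relations force any nonzero $y/z$-monomial to be of this matching type, while the two \emph{rewiring} relations $y_{\{i,j\}}y_{\{k,l\}}\!=\!y_{\{i,k\}}y_{\{j,l\}}$ and $y_{\{i,j\}}z_k\!=\!y_{\{i,k\}}z_j$ are precisely the elementary $2$-edge swaps of perfect matchings of $U$ (the second swapping the $\infty$-edge against a $y$-edge), and any two perfect matchings of a fixed vertex set are linked by a chain of such swaps — resolve, one at a time, the even cycles of their symmetric difference. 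As $p_U$ has degree $2|U|$ if $\infty\!\notin\!U$ and $2|U|\!-\!1$ if $\infty\!\in\!U$, the Hilbert series of $A$ is at most $\sum_{k\in2\N}\binom{n}{k}t^{2k}+\sum_{k\in2\N+1}\binom{n}{k}t^{2k+1}$, which by the preceding proof equals $\HP_{C_{\star,2}(\frak{gl}_n^\prec)}$; hence $\HP_{\Lambda[\ldots]/I}\!\leq\!(1\!+\!t)^{n+2}\HP_{C_{\star,2}(\frak{gl}_n^\prec)}\!=\!\HP_{n,2}$ coefficientwise. Combined with the surjectivity of $\bar\phi$ and $\HP_{H^\ast(\frak{gl}^\preceq;\Z_2)}\!=\!\HP_{n,2}$, this forces equality in each degree, so the spanning set $\{(\prod_{i\in d}x_i)\,p_U\}$ is a basis and $\bar\phi$ is an isomorphism; tensoring with $R$ completes the proof.

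The main obstacle is the normal-form claim of the last paragraph: verifying that the five relations of $I$ genuinely straighten an arbitrary monomial, in particular that the two rewiring relations identify the classes of \emph{all} perfect matchings of a given vertex set. This can be made fully rigorous by a Bergman diamond-lemma argument with a suitable monomial order, but the matching/cycle-resolution picture above is its combinatorial heart; everything else (well-definedness of $\phi$, $\phi(I)\!=\!0$, surjectivity, the generating-function bookkeeping) is routine.
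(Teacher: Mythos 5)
Your proof takes the same basic route as the paper: express $H^\ast$ via the dual wedge basis $\{\chi_{e_\tau e''_\sigma},\chi_{e_\tau e'_\sigma}\}$, invoke Remark~\ref{cupproduct} to identify cup products with wedges, and conclude that the classes $x_a,x_b,x_i,y_\omega,z_i$ generate. The genuine addition is the injectivity step. The paper disposes of it with one informal sentence (``the relations come from the property that a wedge behaves as a set''), which shows $I\subseteq\ker\bar\phi$ but does not establish the reverse inclusion. Your normal-form lemma — that modulo $I$ every nonzero $y,z$-monomial reduces to a perfect matching of an even-sized $U\subseteq[n]\cup\{\infty\}$, and that the two rewiring relations identify all perfect matchings of a fixed $U$ by resolving the alternating cycles of the symmetric difference (the $\infty$-cycle case via the $yz$-rewiring just as the others use the $yy$-rewiring) — gives the coefficientwise bound $\HP_{\Lambda[\ldots]/I}\leq(1+t)^{n+2}\HP_{C_{\star,2}}=\HP_{n,2}$, which together with surjectivity and $\HP_{H^\ast(\frak{gl}^\preceq;\Z_2)}=\HP_{n,2}$ forces equality in every degree. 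The reduction to $R=\Z_2$ via base change is a harmless preliminary that the paper takes for granted. The argument is correct and in fact fills a real gap the paper leaves implicit; the one point taken on trust in both proofs is the cup-product Remark~\ref{cupproduct} itself.
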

\noindent If $\Z_p\!\leq\!R$ with $p\!\geq\!3$, then $H^\ast(\frak{gl}^\preceq;R)\cong\Lambda[x_a,x_b,x_i]$.
\begin{proof} In the proof above, we saw that the $p$-complex is generated by the wedges $e''_\sigma\!=\! \bigwedge_{i\in\sigma}\!e_{ac_i}e_{bc_i}$ with $|\sigma|\!\in\!2\N$ and $e'_\sigma\!=\! e_{ab}\bigwedge_{i\in\sigma}\!e_{ac_i}e_{bc_i}$ with $|\sigma|\!\in\!2\N\!+\!1$. Hence $H^\ast(\frak{gl}^\preceq;R)$ is generated by the duals of all $e_\tau e''_\sigma$ and $e_\tau e'_\sigma$, where $e_\tau$ is the wedge of diagonals, as in \cite[5.10]{citeLampretVavpeticCLAAMT}. By \ref{cupproduct}, cup products correspond to wedges. Thus $e_\tau$ is the product of diagonals, $e''_\sigma$ is a (non-unique) product of those $e''_\omega$ with $|\omega|\!=\!2$ (denoted by $y_\omega$), and $e'_\sigma$ is a (non-unique) product of an \smash{$e'_{\{i\}}$} (denoted by $z_i$) and \smash{$e''_{\sigma\setminus\{i\}}$}. In this way, our cohomology algebra can be viewed as the subalgebra of the whole exterior algebra $\Lambda[e_{xy};\, x\!\preceq\!y]$. The relations come from the property that a wedge behaves as a set, and is zero whenever it contains duplicate elements.
\end{proof}\vspace{2mm}

\subsection{Diamond posets} Let our poset $\preceq$ be given by the Hasse diagram below left. Thus $\dim \frak{gl}^\prec_{n+2}=3n\!+\!3$. A specific embedding into $\frak{gl}_n$ is shown below right.\vspace{-2mm}
$$\begin{tikzpicture}[baseline=(m.center)]\matrix (m) [matrix of math nodes, row sep=13pt, column sep=5pt]{
&&c&\\
b_1 &b_2 & \ldots & b_{n\!-\!1}& b_n\\
&&a&\\ };
\draw(m-3-3)--(m-2-1); \draw(m-3-3)--(m-2-2); \draw(m-3-3)--(m-2-3); \draw(m-3-3)--(m-2-4); \draw(m-3-3)--(m-2-5);
\draw(m-2-1)--(m-1-3); \draw(m-2-2)--(m-1-3); \draw(m-2-3)--(m-1-3); \draw(m-2-4)--(m-1-3); \draw(m-2-5)--(m-1-3); \end{tikzpicture}\hspace{8mm}\frak{gl}^\preceq\!=\!\bigg\{ \raisebox{-6mm}{\includegraphics[width=0.11\textwidth]{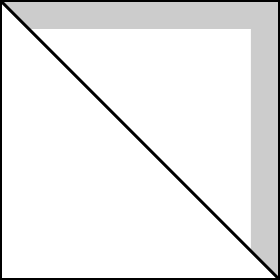}}\bigg\}\vspace{-2mm}$$
Notice that $\frak{gl}^\prec$ is the family of Heisenberg Lie algebras, whose homology was determined in \cite{citeSantharoubaneCHLA} and \cite{citeCairnsJamborCHLAOFFC}. We compute $H_\ast$ of the solvable analog (added diagonals).\vspace{1mm}

\begin{Prp}\label{diamondZ2} $\HP_{\!n,2}=(1\!+\!t)^{n+2}(1\!+\!t^3)\frac{(1+t^2)^{n\!-\!1}+(1-t^2)^{n\!-\!1}}{2}$.
\end{Prp}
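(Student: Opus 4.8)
The plan is to pass to the $2$-complex. Since the diamond poset has $n+2$ elements, the identity $\HP_{\!n,2}=(1+t)^{n+2}\,\HP_{\!C_{\star,2}(\frak{gl}^\prec)}$ from \cite[6.1]{citeLampretVavpeticCLAAMT} reduces the claim to proving $\HP_{\!C_{\star,2}(\frak{gl}^\prec)}=(1+t^3)\tfrac{(1+t^2)^{n-1}+(1-t^2)^{n-1}}{2}$. Throughout, write $u_i=e_{ab_i}$, $v_i=e_{b_ic}$, $w=e_{ac}$, so that the only nonzero bracket is $[u_i,v_i]=w$.

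First I would describe $C_{\star,2}(\frak{gl}^\prec)$ explicitly. The weight of an index $b_i$ in a wedge $v$ equals $[u_i\!\in\!v]-[v_i\!\in\!v]\in\{-1,0,1\}$, so $v$ lies in the $2$-complex precisely when each pair $u_i,v_i$ occurs together or not at all; thus $v=\bigwedge_{i\in\sigma}(u_iv_i)\wedge w^{\varepsilon}$ for a unique $\sigma\subseteq[n]$, $\varepsilon\in\{0,1\}$, and the remaining conditions $w_a,w_c\in2\Z$ amount to $|\sigma|\equiv\varepsilon\pmod2$. Such a wedge has degree $2|\sigma|+\varepsilon$, so $C_{\star,2}$ is supported in degrees $\equiv0\pmod4$ (the $\varepsilon=0$ wedges, $|\sigma|$ even) and in degrees $\equiv3\pmod4$ (the $\varepsilon=1$ wedges, $|\sigma|$ odd), and vanishes in degrees $\equiv1,2\pmod4$.

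Next I would compute $\partial$. Because $[u_i,v_i]=w$ and $w\wedge w=0$, every $\varepsilon=1$ wedge is a cycle and $\partial\bigl(\bigwedge_{i\in\sigma}u_iv_i\bigr)=w\wedge\sum_{i\in\sigma}\bigwedge_{j\in\sigma\setminus i}u_jv_j$ (signs are irrelevant over $\Z_2$). Put $V=\Z_2^{\,n}$ with basis the symbols $u_iv_i$ and identify $w\wedge\Lambda^kV$ with $\Lambda^kV$; the displayed map is then exactly the interior product $\iota_{\mathbf 1}\colon\Lambda^{2\ell}V\to\Lambda^{2\ell-1}V$ with the all-ones functional $\mathbf 1$. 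As the degrees $\equiv1,2\pmod4$ carry no wedges, no differential links the $\varepsilon=0$ and $\varepsilon=1$ parts except within a single pair of consecutive degrees, and $C_{\star,2}$ decomposes into the two-term complexes $\bigl(\Lambda^{2\ell}V\xrightarrow{\ \iota_{\mathbf 1}\ }\Lambda^{2\ell-1}V\bigr)$ in degrees $4\ell,4\ell-1$ for $\ell\ge1$, plus $\Lambda^0V=\Z_2$ in degree $0$. I would then invoke the standard acyclicity of the Koszul-type complex $(\Lambda^\bullet V,\iota_{\mathbf 1})$ — wedging with any $v\in V$ with $\langle\mathbf 1,v\rangle=1$ is a contracting homotopy — which gives $\mathrm{rank}\bigl(\iota_{\mathbf 1}\colon\Lambda^kV\to\Lambda^{k-1}V\bigr)=\binom{n-1}{k-1}$ by an immediate induction from $\binom nk=\binom{n-1}{k-1}+\binom{n-1}{k}$. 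Hence $\dim H_{4\ell}=\binom{n}{2\ell}-\binom{n-1}{2\ell-1}=\binom{n-1}{2\ell}$ and $\dim H_{4\ell-1}=\binom{n}{2\ell-1}-\binom{n-1}{2\ell-1}=\binom{n-1}{2\ell-2}$, so summing and reindexing the odd-degree family gives $\HP_{\!C_{\star,2}}=(1+t^3)\sum_{\ell\ge0}\binom{n-1}{2\ell}(t^2)^{2\ell}=(1+t^3)\tfrac{(1+t^2)^{n-1}+(1-t^2)^{n-1}}{2}$, as required.

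The only real obstacle is bookkeeping: sorting the wedges $\bigwedge_{i\in\sigma}(u_iv_i)\wedge w^{\varepsilon}$ by degree and noticing that the empty degrees $\equiv1,2\pmod4$ uncouple $C_{\star,2}$ into the two-term pieces above, after which nothing is left but Koszul acyclicity and a one-line binomial recursion. As a check, the formula yields $(1+t)^4(1+t^3)$ for $n=2$ and $(1+t)^5(1+t^3)(1+t^4)$ for $n=3$, agreeing with the homology table above.
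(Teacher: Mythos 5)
Your proof is correct, but it takes a genuinely different route from the paper's. The paper sets up a Morse matching $\mathcal{M}=\{e''_{\sigma\cup\{n\}}\!\to e'_\sigma;\,n\notin\sigma\}$ (intended to work uniformly in $p$, since the same matching underlies the $\Z_3$ calculation in \ref{diamondZ3}), computes the resulting Morse boundary $\mathring\partial(e''_\sigma)=\sum_{i,j\in\sigma}2\,e'_{(\sigma\setminus\{i,j\})\cup\{n\}}$, notices it vanishes mod $2$, and then just counts the critical cells $\{e''_\sigma;\,n\notin\sigma\}\cup\{e'_{\sigma\cup\{n\}}\}$. You instead exploit the fact that the $2$-complex is supported only in degrees $\equiv 0,3\pmod 4$, so it disassembles into the isolated two-term pieces $\Lambda^{2\ell}V\to\Lambda^{2\ell-1}V$; you then recognize the differential (mod $2$) as the interior product $\iota_{\mathbf 1}$ on the Koszul complex of $V=\Z_2\langle u_iv_i\rangle$, whose acyclicity gives $\mathrm{rank}=\binom{n-1}{k-1}$ at once, and the Pascal identity finishes. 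Both yield $(1+t^3)\sum_{\ell\ge 0}\binom{n-1}{2\ell}t^{4\ell}$, but your version avoids the Morse machinery entirely for $p=2$ and makes the structure explicit: the subsets-incidence matrix whose $\Z_3$-rank the paper has to import from Linial--Rothschild in \ref{diamondZ3} is, over $\Z_2$, just a Koszul differential, and the rank formula is a two-line consequence of exactness rather than an external citation. The trade-off is that your decoupling into two-term pieces is special to $p=2$ (for odd $p$ the allowed $|\sigma|$'s do chain together), whereas the paper's matching is the scaffolding it reuses for \ref{diamondZ3}.

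One presentational point worth making explicit if you write this up: the identification $w\wedge\Lambda^{k-1}V\cong\Lambda^{k-1}V$ is what lets you view $\partial$ as $\iota_{\mathbf 1}$, and it is harmless precisely because $w\wedge w=0$ forces $\partial(e'_\sigma)=0$, which you did state; but you should also note that $\mathbf 1\ne 0$ on $V$ (any single $u_iv_i$ witnesses it), which is the hypothesis under which the contracting homotopy $v\wedge(-)$ exists. With that spelled out, the argument is airtight.
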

\begin{proof} In a wedge $v\!\in\!C_\star(\frak{gl}^\prec)$, indices $b_i$ have weight either $\pm1$ (iff $e_{ab_i}\!\notin\!v\!\ni\!e_{b_ic}$ or $e_{ab_i}\!\in\!v\!\not\ni\!e_{b_ic}$) or $0$, so if $v$ is contained in $C_{\star,p}$, then every $b_i$ appears twice or not at all in $v$. For any $\sigma\!\subseteq\![n]$ denote $e''_\sigma\!=\! \bigwedge_{i\in\sigma}\!e_{ab_i}e_{b_ic}$ and $e'_\sigma\!=\! e_{ac}\bigwedge_{i\in\sigma}\!e_{ab_i}e_{b_ic}$. Then $\{e''_\sigma;\, |\sigma|\!\in\!p\N\} \!\cup\! \{e'_\sigma;\, |\sigma|\!+\!1\!\in\!p\N\}$ is a basis for $C_{\star,p}(\frak{gl}^\prec)$, with $\partial(e''_\sigma) \!=\! -\sum_{i\in\sigma}\!e'_{\sigma\setminus\{i\}}$ and $\partial(e'_\sigma)\!=\!0$. We define a Morse matching \smash{$\mathcal{M}\!=\!\big\{ e''_{\sigma\cup\{n\}} \!\!\rightarrow\! e'_\sigma;\, n\!\notin\!\sigma\big\}$}. Critical vertices are \smash{$\mathring{\mathcal{M}}\!=\! \{e''_\sigma;\, n\!\notin\!\sigma\} \!\cup\! \{e'_\sigma;\, n\!\in\!\sigma\}$}. Zig-zag paths come in pairs: \vspace{-1mm} $$\xymatrix@R=0.5mm@C=7mm{e''_\sigma\ar[r]&    e'_{\sigma\setminus\{i\}}\\
                       e_{(\sigma\setminus\{i\})\cup\{n\}} \ar[ru]|-{\scriptscriptstyle\mathcal{M}}\ar[r]&e_{(\sigma\setminus\{i,j\})\cup\{n\}}}\text{ \;\;\;and\;\;\; }
                       \xymatrix@R=0.5mm@C=7mm{e''_\sigma\ar[r]&    e'_{\sigma\setminus\{j\}}\\
                       e_{(\sigma\setminus\{j\})\cup\{n\}} \ar[ru]|-{\scriptscriptstyle\mathcal{M}}\ar[r]&e_{(\sigma\setminus\{i,j\})\cup\{n\}}}.\vspace{-1mm}$$
Therefore the boundary is $\mathring{\partial}(e''_\sigma)\!=\! \sum_{i,j\in\sigma}2\,e'_{(\sigma\setminus\{i,j\})\cup\{n\}}$. Over $\Z_2$, this is zero, so \vspace{-2mm}
\begin{longtable}[c]{l}
\;\;$\dim H_{2k}(\frak{gl}^\preceq;\Z_2)=|\{\sigma\!\subseteq\![n\!-\!1];\, |\sigma|\!=\!k\!\in\!2\N\}|$ \;and\\
$\dim H_{2k+1}(\frak{gl}^\preceq;\Z_2)=|\{\sigma\!\subseteq\![n\!-\!1];\, |\sigma|\!=\!k\!-\!1\!\in\!2\N\}|$.
\end{longtable}\vspace{-2mm}
\noindent Every $e''_\sigma$ contributes $t^{2|\sigma|}$ and every $e'_{\sigma\cup\{n\}}$ contributes $t^{2|\sigma|+3}$. Hence \vspace{-1mm}
\begin{longtable}[c]{r@{\;$=$\;}l}
$\HP_{\!C\!_{\star\!,2}(\frak{gl}_n^\prec)}$ &$\sum_{k\in2\N}t^{2k}\binom{n-1}{k} + \sum_{k\in2\N}t^{2k+3}\binom{n-1}{k}$\\
      &$(1\!+\!t^3)\sum_{k\in2\N}(t^2)^k\binom{n-1}{k} = (1\!+\!t^3)\frac{(1+t^2)^{n\!-\!1}+(1-t^2)^{n\!-\!1}}{2}$.
\end{longtable}\vspace{-1mm} \noindent Over $\Z_p$ with $p\!>\!2$, additional matchings are required, but things become messy.
\end{proof}\vspace{2mm}

\begin{Crl} If $\Z_2\!\leq\!R$, then there is an isomorphism of graded $R$-algebras\vspace{-1mm}
\begin{longtable}[c]{l}
$H^\ast(\frak{gl}^\preceq;R)\cong\Lambda[x_a,x_i,x_c,y_\omega,z;\, i\!\in\![n],\omega\!\in\!\binom{[n\!-\!1]}{2}]/\langle y_{\{i,j\}}y_{\{j,k\}}\rangle,$
\end{longtable}\vspace{-2mm}
\noindent where $x_a\!\equiv\!e_{aa}$, $x_i\!\equiv\!e_{b_ib_i}$, $x_c\!\equiv\!e_{cc}$, $y_{\{i,j\}}\!\equiv\! e''_{\{i,j\}}\!+\! e''_{\{i,n\}}\!+\! e''_{\{j,n\}}$ $z\!\equiv\!e_{ac}e_{ab_n}e_{b_nc}$, $e''_{\{i,j\}}\!=\!e_{ab_i}e_{b_ic}e_{ab_j}e_{b_jc}$ (the length of a wedge determines the degree).
\end{Crl}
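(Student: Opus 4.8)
The plan is to descend to the ideal $\frak{gl}^\prec$, recycle the data from the proof of Proposition~\ref{diamondZ2}, and then compute cup products with the wedge calculus of Remark~\ref{cupproduct}. Since the diagonal matrices form an abelian subalgebra, the Morse reduction of \cite[5.10]{citeLampretVavpeticCLAAMT} and \cite[6.1]{citeLampretVavpeticCLAAMT} presents $H^\ast(\frak{gl}^\preceq;R)$ on the duals of the wedges $e_\tau\wedge v$, where $e_\tau$ runs over subsets of the $n+2$ diagonal units $e_{aa},e_{cc},e_{b_1b_1},\dots,e_{b_nb_n}$ (each a cocycle, pairwise of disjoint support, squaring to $0$) and $v$ runs over critical wedges of $C_{\star,2}(\frak{gl}^\prec)$; thus the diagonals split off an exterior tensor factor $\Lambda[x_a,x_c,x_1,\dots,x_n]$ and it remains to describe the ring $H^\ast(\frak{gl}^\prec;R)$. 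From the proof of \ref{diamondZ2} we have over $\Z_2$ the basis $\{e''_\sigma\}\cup\{e'_\sigma\}$, the differential $\partial e''_\sigma=\sum_{i\in\sigma}e'_{\sigma\setminus\{i\}}$ and $\partial e'_\sigma=0$, the matching $\mathcal M=\{e''_{\sigma\cup\{n\}}\to e'_\sigma:n\notin\sigma\}$, and zero Morse differential, so $H^\ast(\frak{gl}^\prec;\Z_2)$ lives in degrees $\equiv 0,3\pmod 4$ with $\dim H^{4m}=\dim H^{4m+3}=\binom{n-1}{2m}$.

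Remark~\ref{cupproduct} does not apply verbatim — no $4$-chain is a cycle, so $H_\ast$ has no basis of single wedges — but its engine still runs on cohomology, where the Chevalley cup product is literally the wedge product of cochains and a product of two basis-wedge cochains is their concatenation if the supports are disjoint and $0$ otherwise. Since $\partial$ never lands in an $e''$-degree, every $\chi_{e''_\sigma}$ is a cocycle; the coboundaries $\delta\chi_{e'_\rho}=\sum_{i\notin\rho}\chi_{e''_{\rho\cup\{i\}}}$ rewrite each $[\chi_{e''_\tau}]$ with $n\in\tau$ in terms of the $[\chi_{e''_\sigma}]$ with $\sigma\subseteq[n-1]$, and comparison with the dimensions above shows $\{[\chi_{e''_\sigma}]:\sigma\subseteq[n-1]\}$ is a basis of the degree-$\equiv0$ part. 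Put $y_\omega=[\chi_{e''_\omega}]$ for $\omega\in\binom{[n-1]}{2}$; its pairing with the reconstructed cycle $e''_{\omega'}+\sum_{k\in\omega'}e''_{(\omega'\setminus\{k\})\cup\{n\}}$ equals $\delta_{\omega\omega'}$, and this cycle is exactly the homology representative recorded in the statement. The lone odd generator is $z=\bigl[\sum_{i\in[n]}\chi_{e'_{\{i\}}}\bigr]$ — the cochain that is closed because its coboundary terms cancel in pairs over $\Z_2$ — which is dual to the critical cycle $e'_{\{n\}}=e_{ac}e_{ab_n}e_{b_nc}$.

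Now the products fall out by wedge bookkeeping: $z\smile z=0$ and $y_\omega\smile y_\omega=0$ (a repeated matrix unit, or a shared $e_{ac}$); crucially $y_{\{i,j\}}\smile y_{\{j,k\}}=0$ since $\chi_{e''_{\{i,j\}}}$ and $\chi_{e''_{\{j,k\}}}$ share the units $e_{ab_j},e_{b_jc}$; and $y_{\{i,j\}}\smile y_{\{k,l\}}=\chi_{e''_{\{i,j,k,l\}}}=y_{\{i,k\}}\smile y_{\{j,l\}}$ is a nonzero degree-$8$ basis element insensitive to how $\{i,j,k,l\}$ is matched, which forces the Pl\"ucker-type relations $y_{\{i,j\}}y_{\{k,l\}}=y_{\{i,k\}}y_{\{j,l\}}$ (just as in the umbrella case above). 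Finally $z\smile y_\omega=\sum_{i\notin\omega}\chi_{e'_{\omega\cup\{i\}}}$ is dual to $e'_{\omega\cup\{n\}}$, so multiplication by $z$ carries the degree-$\equiv0$ part isomorphically onto the degree-$\equiv3$ part. Consequently the evident algebra map from $\Lambda[x_a,x_c,x_i,z,y_\omega]$ modulo the ideal generated by $y_{\{i,j\}}y_{\{j,k\}}$ and $y_{\{i,j\}}y_{\{k,l\}}-y_{\{i,k\}}y_{\{j,l\}}$ onto $H^\ast(\frak{gl}^\preceq;R)$ is a surjective homomorphism, and I would finish by verifying injectivity through a graded-dimension count against Proposition~\ref{diamondZ2}: the standard monomials (a subset of diagonals, an optional $z$, a product of $y_\omega$'s with pairwise-disjoint index pairs) must number $[t^k]\,(1+t)^{n+2}(1+t^3)\tfrac{(1+t^2)^{n-1}+(1-t^2)^{n-1}}{2}$ in every degree $k$.

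The expected main obstacle is precisely this last step — a straightening law for the $y$-subalgebra. Without the Pl\"ucker relations there are $(2m-1)!!$ reduced monomials on each $2m$-subset of $[n-1]$, one per perfect matching on it, whereas $\dim H^{4m}=\binom{n-1}{2m}$ demands exactly one; one must show the Pl\"ucker relations, together with $y_\omega^2$ and $y_{\{i,j\}}y_{\{j,k\}}$, form a Gr\"obner basis whose standard monomials are indexed by the support $2m$-set alone, and then check that adjoining $z$ and the exterior diagonal factor creates no further relations. Everything past the straightening law is routine wedge-product calculus; tracking the signs and orderings hidden in ``the length of a wedge determines the degree'' is bookkeeping only, since over $\Z_2$ the signs are irrelevant.
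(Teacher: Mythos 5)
Your argument follows essentially the same route as the paper's: reduce $C_{\star,2}(\frak{gl}^\prec)$ with the matching from Proposition \ref{diamondZ2}, replace the wedges $e''_\sigma$ by the cycles $\overline e''_\sigma=e''_\sigma+\sum_{i\in\sigma}e''_{(\sigma\setminus\{i\})\cup\{n\}}$, and read cup products off the wedge product. The paper takes the primal view (produces a zero-boundary subcomplex with basis $\{\overline e''_\sigma,\,e'_{\sigma\cup\{n\}}\}$, then dualizes), while you work on cochains via Morse coboundaries; the two are equivalent, and your observation that Remark \ref{cupproduct} is literally stated for a basis of single wedges --- which the $\overline e''_\sigma$ are not --- is a fair technical point that the paper also glosses over when it invokes \ref{cupproduct} for the $\overline\chi''_\sigma$.

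The substantive thing you caught is real: the ideal printed in the corollary, $\langle y_{\{i,j\}}y_{\{j,k\}}\rangle$, is missing the Pl\"ucker relations $y_{\{i,j\}}y_{\{k,l\}}-y_{\{i,k\}}y_{\{j,l\}}$ for disjoint pairs. Without them the degree-$4m$ piece of the quotient has $(2m-1)!!\binom{n-1}{2m}$ surviving monomials --- one per perfect matching of each $2m$-subset of $[n-1]$ --- whereas $\dim H^{4m}=\binom{n-1}{2m}$ by Proposition \ref{diamondZ2}. The paper's own proof does establish these relations (they are precisely the rule $\overline e''_\sigma\,\overline e''_\tau=\overline e''_{\sigma\cup\tau}$ for disjoint $\sigma,\tau$), and they do appear explicitly in the analogous umbrella corollary, so the diamond statement has simply dropped a generator of the ideal; you should include them. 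Once they are in, the straightening law you flag as the main obstacle is routine --- any two perfect matchings of a $2m$-set are connected by swaps of disjoint pairs, so the reduced monomials in the $y$-subalgebra biject with $2m$-subsets of $[n-1]$ --- and the surjection onto $H^\ast(\frak{gl}^\preceq;R)$ is then an isomorphism by the graded dimension count you propose.
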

\begin{proof} In the proof above, we saw that the $2$-complex is spanned by certain wedges $e''_\sigma\!=\! \bigwedge_{i\in\sigma}\!e_{ab_i}e_{b_ic}$ and $e'_\sigma\!=\! e_{ab}\!\bigwedge_{i\in\sigma}\!e_{ab_i}e_{b_ic}$. For $\overline{e}''_\sigma\!:=\! e''_\sigma\!+\!\sum_{i\in\sigma}\!e''_{(\sigma\setminus\{i\})\cup\{n\}}$, the set
$$\big\{\overline{e}''_\sigma, e''_{\tau\cup\{n\}};\, \sigma,\tau\!\subseteq\![n\!\!-\!\!1],\, |\sigma|,|\tau|\!+\!1\!\in\!2\N\big\} \cup \big\{e'_\sigma;\, \sigma\!\subseteq\![n],\, |\sigma|\!\in\!2\N\!+\!1\big\}$$ is a new basis, and after the same matching as above, the critical vertices are
$$\big\{\overline{e}''_\sigma;\, \sigma\!\subseteq\![n\!\!-\!\!1],\, |\sigma|\!\in\!2\N\big\} \cup \big\{e'_{\sigma\cup\{n\}};\, \sigma\!\subseteq\![n\!\!-\!\!1],\, |\sigma|\!\in\!2\N\big\},$$
but this time there are no zig-zag paths, since $\partial(\overline{e}''_\sigma)\!=\!0$. In this way, we obtained a subcomplex with zero boundaries that generates homology. Let $\overline{\chi}''_\sigma$ and $\chi'_{\sigma\cup\{n\}}$ be the dual wedges that generate cohomology, so that $\overline{\chi}''_\sigma(\overline{e}''_\sigma)\!=\! \overline{\chi}''_\sigma(e''_\sigma)\!=\!1$. Since $\overline{e}''_\sigma \overline{e}''_\tau\!=\! \icases{\overline{e}''_{\sigma\cup\tau}}{\!\!\text{if }\sigma\cap\tau=\emptyset}{0}{\!\!\text{if }\sigma\cap\tau\neq\emptyset}{\scriptstyle}{-2pt}{\big}$ and $\overline{e}''_\sigma e'_{\tau\cup\{n\}}\!=\! \icases{e'_{\sigma\cup\tau\cup\{n\}}}{\text{\!\!if }\sigma\cap\tau=\emptyset}{0}{\!\!\text{if }\sigma\cap\tau\neq\emptyset}{\scriptstyle}{-2pt}{\big}$ and $e'_\sigma e'_\tau=0$, these relations also hold for cup products of dual wedges, by \ref{cupproduct}. Thus $H^\ast(\frak{gl}^\preceq;R)$ is isomorphic to the subalgebra of $\Lambda[e_{xy};\, x\!\preceq\!y]$ generated by all $e_{ii},\overline{e}''_{\{i,j\}},e'_{\{n\}}$.
\end{proof}\vspace{2mm}

\begin{Crl}\label{diamondZ3} For $F(t)\!=\!\frac{t(1+t)^{n+1}}{1+t+t^2} \big(1\!-\!(\frac{t}{1+2t+t^2})^{\lceil n/2\rceil}\big)$ and $\epsilon\!=\!e^{2\pi\mathbf{i}/3}$ we have
$$\textstyle{\HP_{n,3}\!=\!\frac{(1+t)^{n+2}}{3}\sum_{i\in[3]}(1\!+\!\epsilon^it)(1\!+\!\epsilon^it^2)^n\!-\!\frac{t+1}{t}F(\epsilon^it^2).}$$
\end{Crl}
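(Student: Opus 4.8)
The plan is to reduce the statement to a $\Z_3$-rank computation for inclusion matrices and then resum the answer with the discrete Fourier lemma \ref{DFT}. I would start from the description of $C_{\star,3}(\frak{gl}^\prec)$ found in the proof of \ref{diamondZ2}: a basis is given by the wedges $e''_\sigma=\bigwedge_{i\in\sigma}e_{ab_i}e_{b_ic}$ with $|\sigma|\in3\N$ and $e'_\sigma=e_{ac}\bigwedge_{i\in\sigma}e_{ab_i}e_{b_ic}$ with $|\sigma|+1\in3\N$, together with $\partial(e''_\sigma)=-\sum_{i\in\sigma}e'_{\sigma\setminus\{i\}}$ and $\partial(e'_\sigma)=0$. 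Since $e''_\sigma$ has homological degree $2|\sigma|$ and $e'_\sigma$ has degree $2|\sigma|+1$, the only nonzero differential goes $C_{6l}\to C_{6l-1}$; hence, as a chain complex, $C_{\star,3}(\frak{gl}^\prec)$ is the direct sum of $\Z$ in degree $0$ and, for each $l\ge1$, of the two-term complex $\Z^{\binom{n}{3l}}\xrightarrow{-D}\Z^{\binom{n}{3l-1}}$, where $D$ is the ``down'' operator $\sigma\mapsto\sum_{i\in\sigma}(\sigma\setminus\{i\})$ on $3l$-subsets of $[n]$. Consequently $\dim_{\Z_3}H_{6l}=\binom{n}{3l}-\rho_{3l}$ and $\dim_{\Z_3}H_{6l-1}=\binom{n}{3l-1}-\rho_{3l}$, where $\rho_k=\mathrm{rank}_{\Z_3}D$ on $k$-subsets.

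The heart of the proof is evaluating $\rho_k$. The matrix of $D$ is the inclusion matrix of $(k-1)$-subsets against $k$-subsets of $[n]$, and I would compute its $\Z_3$-rank either from Wilson's diagonal form for such inclusion matrices (its nonzero invariant factors are $k-i$ with multiplicity $\binom{n}{i}-\binom{n}{i-1}$, $0\le i\le k-1$, valid while $2k\le n+1$) or, staying inside AMT, by a secondary acyclic matching that peels off the elements of $[n]$ one at a time; after each peeling the reduced differential is again a ``remove-further-elements'' inclusion map whose coefficients are units modulo $3$. Both routes give $\rho_k=\sum_{i\not\equiv0\,(3)}\big(\binom{n}{i}-\binom{n}{i-1}\big)$ for $2k\le n+1$, together with the complementation symmetry $\rho_k=\rho_{n+1-k}$ (obtained by replacing each subset by its complement in $[n]$). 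Telescoping the binomial difference and using $\frac{1}{1+s+s^2}=(1-s)\sum_{m\ge0}s^{3m}$ rewrites this as $\rho_k=[s^k]F(s)$ with $F$ exactly as in the statement; the truncating factor $1-\big(\frac{s}{(1+s)^2}\big)^{\lceil n/2\rceil}$ is precisely the correction that turns the ``Wilson-range'' expression, valid for $k\le\lceil n/2\rceil$, into its mirror image for $k>\lceil n/2\rceil$.

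It then remains to assemble the generating function. I would write $\HP_{C_{\star,3}(\frak{gl}^\prec)}$ as the naive count of basis wedges, $\sum_{k\in3\N}\binom{n}{k}t^{2k}+\sum_{k\equiv2\,(3)}\binom{n}{k}t^{2k+1}$, minus the boundary correction $\sum_{l\ge1}\rho_{3l}(t^{6l}+t^{6l-1})=\frac{1+t}{t}\sum_{l\ge1}\rho_{3l}(t^2)^{3l}$. Applying \ref{DFT} with $p=3$ and $\epsilon=e^{2\pi\mathbf{i}/3}$ (extracting the residues $0$ and $2$ mod $3$ of $(1+x)^n$) turns the naive count into $\frac13\sum_{i\in[3]}(1+\epsilon^it)(1+\epsilon^it^2)^n$, and, since $\rho_0=0$ and $\rho_k=[s^k]F$, turns $\sum_{l\ge1}\rho_{3l}s^{3l}$ into $\frac13\sum_{i\in[3]}F(\epsilon^is)$. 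Multiplying through by $(1+t)^{n+2}$ via the identity $\HP_{C_\star(\frak{gl}_n^\preceq)}=(1+t)^{n+2}\HP_{C_{\star,3}(\frak{gl}^\prec)}$ of \cite[6.1]{citeLampretVavpeticCLAAMT} (the diamond poset has $n+2$ diagonals) yields exactly the stated formula.

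The main obstacle is the middle step: pinning down the exact $\Z_3$-rank of the inclusion operator and, in particular, locating where ``injective/surjective'' fails — this is what produces the ceiling $\lceil n/2\rceil$ and the rational correction $\frac{t+1}{t}F(\epsilon^it^2)$, and it is where the promised messiness of the $p=3$ case lives (sign bookkeeping through the iterated matchings, or reconciling Wilson's output with the closed form). Once $\rho_k=[s^k]F(s)$ is established, the remaining step is a mechanical roots-of-unity resummation, which I would verify against the $n=2,3$ columns of the table in Section~3 as a sanity check.
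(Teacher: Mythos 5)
Your outline follows the paper's proof step for step: you reduce $C_{\star,3}(\frak{gl}^\prec)$ to the two-term complexes $\Z^{\binom{n}{3l}}\!\to\!\Z^{\binom{n}{3l-1}}$ given by the ``down'' (subset inclusion) operator, write $\HP_{C_{\star,3}}$ as naive count minus $\frac{1+t}{t}\sum_l(\mathrm{rank}_{\Z_3}\partial_{3l})t^{6l}$, resum with \ref{DFT}, and multiply by $(1+t)^{n+2}$. The one genuine divergence is the rank evaluation. The paper plugs in the Linial--Rothschild formula $\mathrm{rank}_{\Z_3}\partial_k=\sum_j\binom{n-2j-1}{k-j-1}$, which is a single finite sum whose generating function collapses to $F(t)$ in a few lines: the truncation $1-(\frac{t}{(1+t)^2})^{\lceil n/2\rceil}$ falls out automatically because $\binom{n-2j-1}{k-j-1}$ vanishes once $2j\ge n$, with no case split and no appeal to complementation symmetry. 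You instead invoke Wilson's diagonal form, giving $\rho_k=\sum(\binom{n}{i}-\binom{n}{i-1})$; as written this is imprecise — it needs the range $0\le i\le k-1$ and the correct congruence condition is $i\not\equiv k\pmod 3$ (which only reads $i\not\equiv0$ because you are specializing to $k=3l$) — and it is only valid for $2k\le n+1$, so you must explicitly telescope, invoke $\rho_k=\rho_{n+1-k}$, and reconcile the two halves into a single closed form for $F$. You correctly flag this reconciliation as the main obstacle, but you do not carry it out, which is exactly the step the Linial--Rothschild route short-circuits. So: same architecture, different citation at the rank step, and your version leaves the $\rho_k=[s^k]F(s)$ identity asserted rather than derived.
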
\vspace{1mm}
\begin{proof} By the derivation of \ref{diamondZ2}, for any prime $p$ the generating function over $\Z_p$ is\vspace{-1mm}
\begin{longtable}[c]{l}
$(1\!+\!t)^{n+2}\Big(\!\sum_{k\in p\N}\!\big(\binom{n}{k}\!-\!\mathrm{rank}_{\Z_p}\!\partial_k\big)t^{2k}+
\sum_{k\in p\N-1}\!\big(\binom{n}{k}\!-\!\mathrm{rank}_{\Z_p}\!\partial_{k+1}\big)t^{2k+1}\Big)$\\
$=(1\!+\!t)^{n+2}\!\sum_{k\in p\N}\big(\binom{n}{k}\!-\!\mathrm{rank}_{\Z_p}\!\partial_k\big)t^{2k}+\big(\binom{n}{k\!-\!1}\!-\!\mathrm{rank}_{\Z_p}\!\partial_{k}\big)t^{2k\!-\!1}$\\
$=(1\!+\!t)^{n+2}\!\sum_{k\in p\N}\binom{n}{k}t^{2k}\!+\!\binom{n}{k\!-\!1}t^{2k\!-\!1}\!-\!\frac{t+1}{t}t^{2k}\mathrm{rank}_{\Z_p}\!\partial_k$\\
$=\frac{(1+t)^{n+2}}{p}\sum_{i\in[p]}(1\!+\!\epsilon^it^2)^n+ t^{-1}\epsilon^it^2(1\!+\!\epsilon^it^2)^n- \frac{t+1}{t}F(\epsilon^it^2)$ by \ref{DFT}\\
$=\frac{(1+t)^{n+2}}{p}\sum_{i\in[p]}(1\!+\!\epsilon^it)(1\!+\!\epsilon^it^2)^n- \frac{t+1}{t}F(\epsilon^it^2)$,
\end{longtable}\vspace{-2mm}
\noindent where $\partial_k$ is a combinatorial $0/1$-matrix, namely the `subsets incidence' matrix of size $\binom{n}{k\!-\!1}\!\times\!\binom{n}{k}$\!, where $\sigma\tau$-th entry is $1$ iff $\sigma\!\subseteq\!\tau$. By \cite{citeLinialRothschildIMSRF}, $\mathrm{rank}_{\Z_3}\partial_k\!=\!\sum_j\!\binom{n-2j-1}{k-j-1}$, so \vspace{-1mm}
\begin{longtable}[c]{l}
$F(t)= \sum_k\mathrm{rank}_{\Z_3}\!\partial_k\,t^k= \sum_{k,j\in\N}\binom{n-2j-1}{k-j-1}t^k= \sum_{j,k\in\N}\binom{n-2j-1}{k}t^{k+j+1}$\\
$=t\sum_{j}\!t^j\sum_k\!\binom{n-2j-1}{k}t^k =t\sum_{0\leq n-2j-1}\!t^j(1\!+\!t)^{n-2j-1}$\\
$=t(1\!+\!t)^{n\!-\!1}\sum_{j\leq (n-1)/2}\!\big(\frac{t}{(1+t)^2}\big)^j =t(1\!+\!t)^{n\!-\!1}\big(\sum_{j\in\N}\!-\!\sum_{j\geq n/2}\big)$\\
$=t(1\!+\!t)^{n\!-\!1}\big(1\!-\!(\frac{t}{1+2t+t^2})^{\lceil n/2\rceil}\big)/\big(1\!-\!\frac{t}{1+2t+t^2}\big)=\frac{t(1+t)^{n+1}}{1+t+t^2} \big(1\!-\!(\frac{t}{1+2t+t^2})^{\lceil n/2\rceil}\big)$.
\end{longtable}\vspace{-2mm}
\noindent For cases $p\!\geq\!5$ we feel intimidated to try computing the generating function.
\end{proof}

\vspace{4mm}
\section{Distributions of coefficients}
In this section, we observe how the coefficients of the generating functions (= dimensions of homologies = Betti numbers) we computed are allocated. Some research in this direction was done in \cite{citePouseeleBNBNLA}, where it was shown that the coefficients for nilpotent Lie algebras are often unimodal (i.e. ascending and then descending) or have an M-shape. Here we display much more exotic examples.\\[2mm]

For the diamond poset, the weighted coefficients (divided by maximum) of $\HP_{n,2}$ in \ref{diamondZ2} for $50\!\leq\!n\!\leq\!150$ with $n\!\in\!10\N$ are plotted in the following picture:\\
\includegraphics[width=0.99\textwidth]{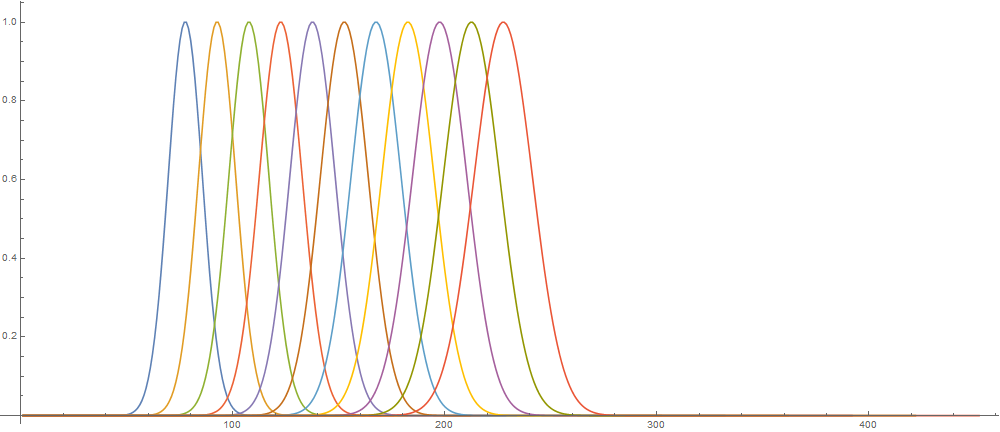} \\[2mm]

For the diamond poset, the coefficients of $\HP_{n,3}$ in \ref{diamondZ3} for $1\!\leq\!n\!\leq\!14$ are plotted in the following pictures (though for large $n$ the distribution becomes binomial):\\
\begin{longtable}[c]{cc}
\includegraphics[width=0.47\textwidth]{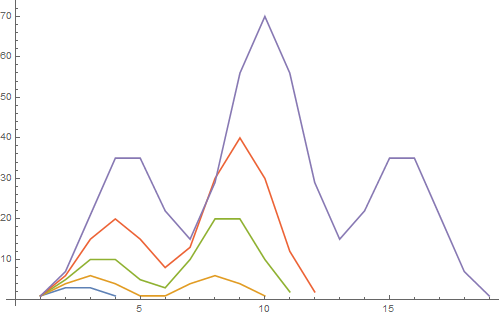} &\includegraphics[width=0.47\textwidth]{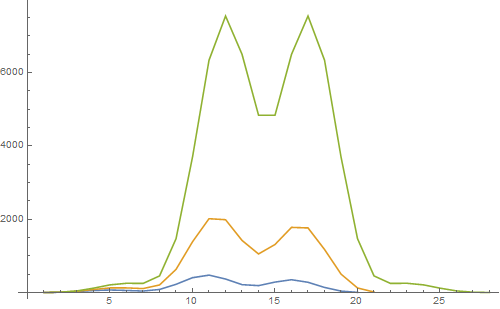}\\
\includegraphics[width=0.47\textwidth]{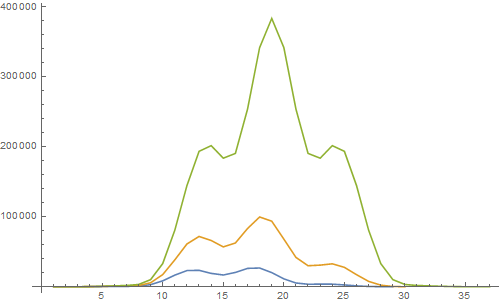} &\includegraphics[width=0.47\textwidth]{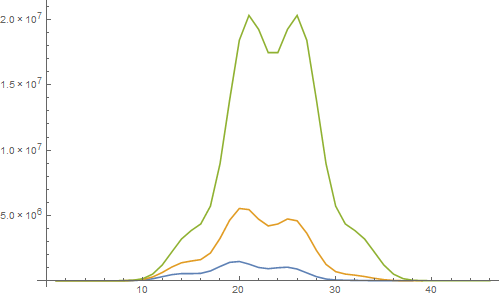}
\end{longtable}

For the complete bipartite posets, the coefficients of $\HP_{p,n,p}$ in \ref{pnp} for $n\!=\!200$ and $p\!=\!2,3,5,7$ (from left to right) are plotted in the following picture:\\
\includegraphics[width=0.99\textwidth]{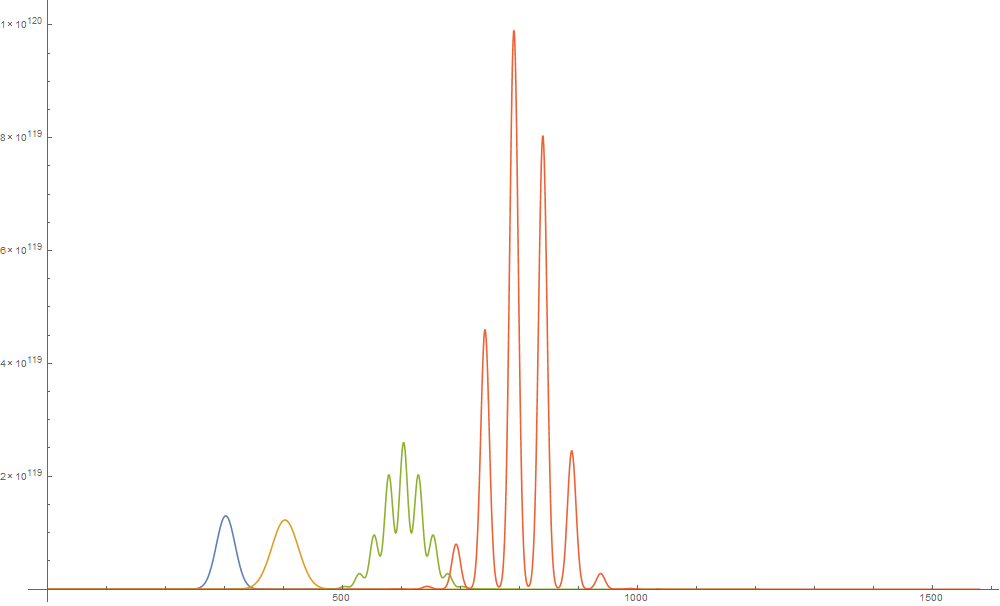}\\
\noindent We suspect this provides an affirmative answer to the second question in \cite[p.86]{citePouseeleBNBNLA}.

\vspace{4mm}
\section{Afterword}
\subsection{Conclusion} We've seen, that AMT enables graph theory and combinatorics to come to the aid of homological algebra. It was unexpected that so much rich problems come from really simple posets of height only $1$ and $2$.
\par At present, very few Hilbert-Poincar\'{e} series for the (co)homology of Lie algebras are known (we provided six new families), and fewer still are the computed cup products in the literature (we provided three new families, which are quite interesting). The formulas seem to be nicest over rings of characteristic two, partly due to the fact that the primitive root of unity $e^{2\pi\mathbf{i}/p}\!\in\!\C$ is real iff $p\!=\!2$. 
\par Calculations of $H^\ast(\frak{gl}^\preceq;\Z_p)$ for posets of larger height are also possible, but then it is easier to compute over large characteristics $p$. The use of the $p$-complex $C_{\star,p}(\frak{gl}_n^\prec)$ for the right kind of posets $\preceq$ and primes $p$ can reduce the calculation of cohomology to mere enumerative combinatorics, which is the goal of AMT. 
\subsection{Acknowledgment} This research was supported by the Slovenian Research Agency grants P1-0292-0101, J1-5435-0101, J1-6721-0101, BI-US/12-14-001.
\par We wish to thank Richard Stanley for generously providing the beautiful formula \ref{Stanley} and its elegant proof, and Matjaž Konvalinka for producing an alternative formula \ref{Konvalinka}, as well as many pieces of advice in combinatorial investigations.

\vspace{4mm}

\end{document}